\documentclass[a4paper,12pt]{article}



\makeatletter
\@addtoreset{footnote}{page}
\makeatother


\usepackage{style-enumitem}

\usepackage{amsthm}
\usepackage{amsmath,amssymb,latexsym,amsfonts,mathrsfs}


\renewcommand{\hat}{\widehat}
\renewcommand{\tilde}{\widetilde}



 %


\newcommand{\bD}{\ensuremath{\mathbb{D}}}
\newcommand{\bE}{\ensuremath{\mathbb{E}}}

\newcommand{\bP}{\ensuremath{\mathbb{P}}}

\newcommand{\bR}{\ensuremath{\mathbb{R}}}

\newcommand{\cB}{\ensuremath{\mathcal{B}}}

\newcommand{\cH}{\ensuremath{\mathcal{H}}}

\newcommand{\cL}{\ensuremath{\mathcal{L}}}
\newcommand{\cM}{\ensuremath{\mathcal{M}}}

\theoremstyle{plain}
\newtheorem{Thm}{Theorem}[section]

\newtheorem{Prop}[Thm]{Proposition}

\theoremstyle{definition}

\newtheorem{Def}[Thm]{Definition}
\newtheorem{Rem}[Thm]{Remark}

\setlength\topmargin{0mm}
\setlength\headheight{0mm}
\setlength\headsep{0mm}
\setlength\topskip{0mm}
\setlength\textheight{230mm}
\setlength\footskip{20mm}

\setlength\oddsidemargin{0mm}
\setlength\evensidemargin{0mm}
\setlength\textwidth{160mm}

\setlength\parindent{5mm}
\setlength\parskip{3mm}

\pagestyle{plain}

\numberwithin{equation}{section}

\makeatletter
\renewcommand\section{\@startsection {section}{1}{\z@}%
	{-3.5ex \@plus -1ex \@minus -.2ex}%
	{2.3ex \@plus.2ex}%
	{\normalfont\large\bf}}
\makeatother

\makeatletter
\renewcommand\subsection{\@startsection {subsection}{1}{\z@}%
	{-3.5ex \@plus -1ex \@minus -.2ex}%
	{2.3ex \@plus.2ex}%
	{\normalfont\normalsize\bf}}
\makeatother

\begin{document}

\begin{center}
	{\Large \bf 
		Fluctuation scaling limits for positive recurrent jumping-in diffusions with small jumps
	}
\end{center}
\begin{center}
	Kosuke Yamato and Kouji Yano
\end{center}
\begin{center}
	{\small \today}
\end{center}

\begin{abstract}
	For positive recurrent jumping-in diffusions with small jumps, we establish distributional limits of the fluctuations of inverse local times and occupation times.
	For this purpose, we introduce and utilize eigenfunctions with modified Neumann boundary condition and apply the Krein-Kotani correspondence. 
\end{abstract}


\section{Introduction}\label{section: intro}
 We study a strong Markov process $X$ on the half line $[0,\infty)$ (resp.\ the real line $\bR$) which has continuous paths up to the first hitting time of $0$ and, as soon as $X$ hits $0$, $X$ jumps into the interior $(0,\infty)$ (resp.\ $\bR \setminus \{0\}$) and starts afresh. 
 We call such a process $X$ a {\it unilateral} (resp.\ {\it bilateral}) {\it jumping-in diffusion}. 
 
 Let us consider the inverse local time $\eta$ at $0$ of a jumping-in diffusion $X$, especially in the positive recurrent case where the inverse local time $\eta$ has a degenerate scaling limit:
 \begin{align}
 	 \frac{1}{t}\eta (t) &\xrightarrow[t \to \infty]{P} b \in [0,\infty). \label{}
 \end{align}  
 One of our two main aims is to establish its {\it fluctuation scaling limit} of the inverse local time $\eta$ of the form:
 \begin{align}
 g(\gamma)\left(\frac{\eta(\gamma t)}{\gamma} - b t\right) &\xrightarrow[\gamma \to \infty]{d} S(t) \ \text{on} \  \bD  \label{eq92}
 \end{align}
 for some function $g(\gamma)$ which diverges to $\infty$ as $\gamma \to \infty$ and some stable process $S(t)$ without negative jumps. Here $\bD$ denotes the space of c\`adl\`ag paths from $[0,\infty)$ to $\bR$ equipped with Skorokhod's $J_1$-topology.
  
 The other one is the fluctuation scaling limit of the occupation time of a bilateral jumping-in diffusion $X$: $A(t) = \int_{0}^{t}1_{(0,\infty)}(X_s)ds$. In the positive recurrent case where $A(t)$ has a degenerate mean:
 \begin{align}
 	\frac{1}{t}A(t) \xrightarrow[t \to \infty]{P} p \in (0,1), \label{eq155} 
 \end{align}
 we establish the fluctuation scaling limits of the form:
 \begin{align}
  	g(\gamma)\left(\frac{A(\gamma t)}{\gamma} - p t\right) \xrightarrow[\gamma \to \infty]{f.d.} Z(t) \label{eq154}
 \end{align}
 for some divergent function $g$ and some limit process $Z(t)$. Here $\xrightarrow{f.d.}$ denotes the convergence of finite-dimensional distributions.
 
%
%

 \subsection{Previous study on scaling limits for diffusions}
 
 In order to study the inverse local times and occupation times for diffusions (without any jumps at all), we may assume without loss of generality that diffusions have the natural scale, whose local generators are of the form: $\frac{d}{dm}\frac{d^+}{dx}$ with a {\it speed measure} $m$. Here $\frac{d^+}{dx}$ denotes the right-differentiation operator. In the unilateral case, the speed measure $m$ comes from a string $m$, i.e.\ $m: (0,\infty) \to \bR$ is a non-decreasing, right-continuous function: $m(a,b] = m(b) - m(a)$. 
 
 Kasahara and Watanabe\cite{KasaharaWatanabe:Brownianrepresentation} has introduced the {\it renormalized inverse local time} $T(m;t)$ at $0$ of a unilateral $\frac{d}{dm}\frac{d^+}{dx}$-diffusion for a string $m$ with $\int_{0+}m(x)^2dx < \infty$, i.e.\ $\int_{0}^{\delta}m(x)^2dx < \infty$ for some $\delta> 0$. The process $T(m;t)$ is a L\'evy process without negative jumps nor Gaussian part. When the boundary $0$ is regular, the process $T(m;t)$ coincides with the inverse local time up to a constant drift. If $m$ has infinite mass around $0$, the boundary $0$ for $X$ is not regular; its inverse local time does not exist. They showed scaling limits of $T(m;t)$ as follows.
 \begin{Thm}{\rm (Kasahara and Watanabe\cite[Theorem 3.3]{KasaharaWatanabe:Brownianrepresentation})} \label{Thm: Kasahara and Watanebe 1}
 	Let $m$ be a string such that $\int_{0+}m(x)^2dx < \infty$ and $\alpha \in (1,2)$.
 	Assume that $m(x, \infty)$ varies regularly at $\infty$ of index $1/\alpha - 1$.
 	Then we have
 	\begin{align}
 	g(\gamma)\left(\frac{T(m;\gamma t)}{\gamma} - m(\infty) t\right) \xrightarrow[\gamma \to \infty]{d} S^{(\alpha)}(t) \ \text{on}\ \bD, \label{}
 	\end{align}
 	where $g(\gamma) = 1/m(\gamma,\infty)$ and $S^{(\alpha)}$ is a spectrally positive strictly $\alpha$-stable process.
 \end{Thm}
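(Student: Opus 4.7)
My plan is to exploit that $T(m;\cdot)$ is a spectrally positive Lévy process without Gaussian part, and reduce the functional limit theorem to pointwise convergence of Laplace exponents. Writing $\Psi_m$ for the Laplace exponent of $T(m;\cdot)$, i.e.\ $E[e^{-\lambda T(m;t)}] = e^{-t\Psi_m(\lambda)}$, we have $\Psi_m(\lambda) = m(\infty)\lambda + \psi_m(\lambda)$ where $\psi_m$ encodes the centered jump contribution and satisfies $\psi_m(\lambda) = o(\lambda)$ as $\lambda \downarrow 0$, so that $m(\infty)$ appears as the almost sure linear growth rate of $T(m;\cdot)$. The rescaled and recentered process $Y_\gamma(t) := g(\gamma)(T(m;\gamma t)/\gamma - m(\infty)t)$ is then itself a spectrally positive Lévy process, with Laplace exponent
\begin{align*}
\Psi_\gamma(\lambda) = \gamma\, \psi_m\!\left(\tfrac{g(\gamma)\lambda}{\gamma}\right).
\end{align*}
By standard criteria for $J_1$-convergence of Lévy processes (e.g.\ the Jacod--Shiryaev semimartingale convergence theorem), Skorokhod convergence of $Y_\gamma$ on $\bD$ to a Lévy process is equivalent to pointwise convergence of $\Psi_\gamma(\lambda)$ to a Laplace exponent of the form $c_\alpha \lambda^\alpha$ for all $\lambda > 0$, whose limit characterizes uniquely the spectrally positive strictly $\alpha$-stable process $S^{(\alpha)}$.

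The central analytic input is the Krein(--Kotani) correspondence, which encodes $\Psi_m$ in terms of the string $m$ through the spectral measure of the associated Sturm--Liouville operator. Combined with a Tauberian theorem for Krein strings, it yields the desired asymptotic conversion: regular variation of $m(x,\infty)$ at $\infty$ of index $1/\alpha - 1$ is equivalent to regular variation of $\psi_m(\lambda)$ at $\lambda = 0$ of index $\alpha$, with a proportionality constant depending only on $\alpha$ and a slowly varying factor inherited directly from the tail of $m$. Substituting this asymptotic into the formula for $\Psi_\gamma$ and using $g(\gamma) = 1/m(\gamma,\infty)$, a short manipulation with the uniform convergence theorem for regularly varying functions shows that the slowly varying factors cancel and produce $\Psi_\gamma(\lambda) \to c_\alpha \lambda^\alpha$ pointwise.

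The principal obstacle I foresee is the Tauberian step: $\Psi_m$ is defined only implicitly through the Krein spectral measure of the string, so extracting the low-frequency asymptotics of $\psi_m$ from the tail behavior of $m(x,\infty)$ requires the full strength of the Krein--Kotani machinery. Once this correspondence is set up in the precise form needed (which, as advertised in the abstract, is a central technical ingredient of the present paper), the remaining steps reduce to a change of variables and an identification of the universal constant $c_\alpha$.
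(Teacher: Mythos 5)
The paper cites this result from Kasahara and Watanabe without reproducing a proof; it explicitly records that their original argument used ``the Brownian time-change method in order to apply It\^o formula,'' which is a completely different strategy from what you outline. Your plan---reduce the $J_1$-convergence of the spectrally positive L\'evy processes to pointwise convergence of Laplace exponents, then extract the small-$\lambda$ asymptotics of the Laplace exponent of $T(m;\cdot)$ from the tail of $m$ via the Krein correspondence---is instead the one the present paper adopts for its own generalization to jumping-in diffusions (Theorem \ref{alpha1-2}, built on Theorem \ref{convtojump} and Proposition \ref{continuityofLT}). In other words, you have rediscovered the authors' strategy for the theorem they actually prove, not the route attributed to Kasahara and Watanabe.

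Within the Laplace-exponent framework there is still one substantive divergence, and it is where a genuine gap remains in your write-up. You propose a Tauberian theorem converting regular variation of $m(x,\infty)$ at $\infty$ of index $1/\alpha-1$ into a regular-variation asymptotic for $\psi_m(\lambda) := \lambda H_m(\lambda) - m(\infty)\lambda$ near $\lambda = 0$ of index $\alpha$, then cancel slowly varying factors against $g(\gamma)=1/m(\gamma,\infty)$. The paper avoids any Tauberian step: it rescales the string, $m_\gamma(x) = m(\gamma x)/\bigl(\gamma^{1/\alpha-1}K(\gamma)\bigr)$, verifies pointwise convergence $m_\gamma\to m^{(\alpha)}$ together with the uniform smallness $\lim_{\delta\downarrow 0}\limsup_\gamma\int_0^\delta m_\gamma(x)^2\,dx = 0$, and applies the Krein--Kotani continuity theorem (Theorem \ref{convequiv}) to deduce $H_{m_\gamma}(\lambda)\to H_{m^{(\alpha)}}(\lambda)$ directly. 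Your route instead quietly relies on a \emph{second-order} Abelian/Tauberian theorem for Krein strings, controlling the \emph{rate} at which $H_m(\lambda)\to H_m(0)$ as $\lambda\downarrow 0$ in terms of the tail of $m$; this is sharper than the standard first-order Krein Tauberian results, and you would need to state it precisely (and verify it under the singular boundary hypothesis $\int_{0+}m(x)^2\,dx<\infty$, which is exactly where Kotani's extension of the correspondence is required) before the cancellation of slowly varying factors becomes rigorous. The rescale-and-apply-continuity route used in the paper makes the role of $\int_{0+}m^2<\infty$ explicit (it is the $\sigma=0$ case of Theorem \ref{convequiv}, guaranteeing no spurious Gaussian part in the limit), whereas your outline invokes it only implicitly through ``the Krein--Kotani machinery.''
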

 They applied this result to show the limit \eqref{eq154} for bilateral diffusions (without any jumps at all).
 \begin{Thm}{\rm (Kasahara and Watanabe\cite[Theorem 4.1]{KasaharaWatanabe:Brownianrepresentation})} \label{Thm: Kasahara and Watanabe 2}
	Let $m$ be a Radon measure on $\bR$ such that $m(\bR) < \infty$ and $\alpha \in (1,2)$.
	Assume the following:
	\begin{align}
		m(x,\infty) \sim c_+x^{1/\alpha - 1}K(x),\  m(-\infty,-x) \sim c_-x^{1/\alpha - 1}K(x) \ (x \to \infty), \label{}
	\end{align}
	for constants $c_\pm > 0$ and a slowly varying function $K$ at $\infty$.
	Then we have
	\begin{align}
	g(\gamma)\left(\frac{A(\gamma t)}{\gamma} - p  t\right) \xrightarrow[\gamma \to \infty]{f.d.} (1-p)c_+ S^{(\alpha)}(t) - pc_- \tilde{S}^{(\alpha)}(t), \label{}
	\end{align}
	where $g(\gamma) = \frac{m(\bR)}{\gamma^{1/\alpha} K(\gamma)}$, $p = \frac{m(0,\infty)}{m(-\infty,0)}$ and $S^{(\alpha)}$ and $\tilde{S}^{(\alpha)}$ are i.i.d.\ spectrally positive strictly $\alpha$-stable processes.
 \end{Thm}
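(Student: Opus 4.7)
The plan is to reduce the bilateral occupation-time problem to two unilateral renormalized inverse local time problems handled by Theorem~\ref{Thm: Kasahara and Watanebe 1}, using excursion theory at $0$. I would write $L$ for the local time at $0$ and $\eta = L^{-1}$ for its right-continuous inverse, and, classifying excursions by their sign, obtain the decomposition $\eta(s) = T_+(s) + T_-(s)$, where $T_\pm(s)$ is the total length of the positive (respectively negative) excursions completed before local time $s$. By It\^o excursion theory, $T_+$ and $T_-$ are independent pure-jump subordinators, and $T_+$ agrees in law with the renormalized inverse local time $T(m_+;\cdot)$ of the unilateral $\frac{d}{dm_+}\frac{d^+}{dx}$-diffusion on $[0,\infty)$ associated with $m_+ = m|_{(0,\infty)}$; symmetrically for $T_-$ with $m_- = m|_{(-\infty,0)}$.

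The key deterministic identity $A(\eta(s)) = T_+(s)$ holds away from the current excursion at time $\gamma t$, which contributes only a bounded error (in a sense to be made precise below). Evaluating at $s = L(\gamma t)$ and using $T_+(s) + T_-(s) = \eta(s) = \gamma t + O(\text{current excursion})$, I get
\begin{equation*}
A(\gamma t) - p\gamma t \;=\; (1-p)T_+(s) - p\,T_-(s) + (\text{error}).
\end{equation*}
The arithmetic identity $(1-p)m(0,\infty) - p\,m(-\infty,0) = 0$, which is nothing but the definition of $p$, then lets me rewrite, for any reference point $s^\ast$,
\begin{equation*}
(1-p)T_+(s) - p\,T_-(s) = (1-p)\bigl[T_+(s^\ast) - m(0,\infty)s^\ast\bigr] - p\bigl[T_-(s^\ast) - m(-\infty,0)s^\ast\bigr] + R(s^\ast,s),
\end{equation*}
where $R(s^\ast,s) = (1-p)[T_+(s) - T_+(s^\ast)] - p[T_-(s) - T_-(s^\ast)]$ has its deterministic drift parts cancelled by the same identity and so consists purely of stable-type fluctuations. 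I then take $s^\ast = \gamma t / m(\bR)$, the natural ergodic centre of $L(\gamma t)$.

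At this stage Theorem~\ref{Thm: Kasahara and Watanebe 1} applies separately to $T_+$ and $T_-$: under the tail equivalences $m(x,\infty) \sim c_+ x^{1/\alpha - 1} K(x)$ and $m(-\infty,-x) \sim c_- x^{1/\alpha - 1} K(x)$, each $T_\pm$ is in the domain of attraction of a spectrally positive strictly $\alpha$-stable process with normalization $1/m_\pm(\gamma',\infty)$ at time scale $\gamma' = \gamma/m(\bR)$. A short regular-variation computation converts this into the stated normalization $g(\gamma) = m(\bR)/(\gamma^{1/\alpha} K(\gamma))$, up to multiplicative constants that get absorbed into the scaling of $S^{(\alpha)}$. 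Since $T_+$ and $T_-$ are built from disjoint Poisson point processes of excursions, they are independent, so the two marginal convergences lift to joint finite-dimensional convergence with independent stable limits $c_+ S^{(\alpha)}$ and $c_- \tilde S^{(\alpha)}$; substituting into the display above yields the claimed limit $(1-p)c_+ S^{(\alpha)}(t) - p\,c_- \tilde S^{(\alpha)}(t)$.

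The main obstacle is the control of the error terms, both the current-excursion bound at $\gamma t$ and the residual fluctuation $R(s^\ast,s)$. Since the fluctuation scale of $\eta$ is $\gamma^{1/\alpha} K(\gamma)$, both $|s - s^\ast|$ and the largest excursion completed by local time of order $\gamma$ are of this order, so these errors live on the critical scale; what saves us is that the drift cancellation reduces them to purely fluctuating increments, which for $\alpha > 1$ are of lower order $|s-s^\ast|^{1/\alpha} \cdot (\text{slowly varying}) \sim \gamma^{1/\alpha^2}(\cdots)$ and hence $o(\gamma^{1/\alpha} K(\gamma))$. A careful formulation of this estimate, together with a tightness bound on the overshoot $\eta(L(\gamma t)) - \gamma t$ at the fixed deterministic time $\gamma t$, is the only delicate step. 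The restriction to finite-dimensional (rather than functional $J_1$) convergence is natural, since the inversion $\eta \mapsto L$ fails Skorokhod continuity when the limit has jumps, and one must instead work time-by-time and use independence of increments of the L\'evy limits to assemble the finite-dimensional laws.
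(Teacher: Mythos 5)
Your strategy is essentially the one the paper uses for its own analogue of this result, namely the proof of Theorem~\ref{flucofA alpha1-2}; the statement you are proving is only quoted here from Kasahara--Watanabe and not re-proved in the paper itself. Both approaches rest on the same three ingredients: the excursion-sign decomposition $\eta(s) = T_+(s) + T_-(s)$ into two independent subordinators coming from positive and negative excursions, the identity $A(\eta(s)) = T_+(s)$ up to a straddling-excursion overshoot at the fixed time $\gamma t$, and the algebraic cancellation $(1-p)m(0,\infty) - p\,m(-\infty,0) = 0$ which removes the drift parts and leaves $(1-p)$ and $-p$ weighting two independent centred processes. The paper's Step~1 produces exactly your sandwich bound with error $\Delta\eta(\ell(\gamma t))$, and its Step~2 shows that overshoot is $o_P(\gamma^{1/\alpha}K(\gamma))$ using the tail of the excursion measure $n[T_0 > s]$ --- the same control you need for the ``current excursion'' term.

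Where you diverge from the paper is in the time-change step. You centre at the deterministic local time $s^\ast = \gamma t / m(\bR)$ and argue that the residual $R(s^\ast, \ell(\gamma t))$, being an increment of a near-stable process over a random interval of length $O_P(\gamma^{1/\alpha}K(\gamma))$, is of lower order $\gamma^{1/\alpha^2}(\text{s.v.})$. The conclusion is right, but as written this is a heuristic: the endpoint $\ell(\gamma t)$ is random and not independent of $T_\pm$, so a pointwise stable-increment bound does not directly apply, and you need a uniform modulus-of-oscillation estimate on the centred $T_\pm$ over intervals of the critical length. That input is available --- Theorem~\ref{Thm: Kasahara and Watanebe 1} gives $J_1$-convergence on $\bD$, not merely finite-dimensional convergence --- but it should be invoked explicitly, since without the tightness your bound on $R$ does not follow. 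The paper avoids this estimate altogether with a softer argument in its Step~3: it first proves joint $J_1$-convergence of $\bigl(g(\gamma)\tilde\eta(\gamma\cdot),\ \tfrac{1}{\gamma}\eta(\gamma\cdot)\bigr)$, composes the first coordinate with the right-continuous inverse of the second, invokes Whitt's random-time-change theorem to obtain $M_1$-convergence of the composition, and then passes to finite-dimensional convergence because the limit process has no fixed jumps. Both routes are valid; the paper's composition argument absorbs all the oscillation control into a ready-made theorem, while yours is more hands-on and needs the tightness input made precise. Your closing remark, that the inversion $\eta \mapsto L$ destroys $J_1$ and leaves only $M_1$ (hence only finite-dimensional) convergence, is exactly the point made at the very end of the paper's proof of Theorem~\ref{flucofA alpha1-2}.
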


 Their idea of reducing the proof to the fluctuation scaling limit of the inverse local time relies upon the fact that every diffusion is realized as a time-changed Brownian motion. 
 
%

\subsection{Previous study on scaling limits for jumping-in diffusions with big jumps}
 Yano\cite{Yano:Convergenceofexcursion} has studied scaling limits of jumping-in diffusions. By Feller\cite{Feller:Theparabolic} and It\^o\cite{Ito:PPP}, under the natural scale, a jumping-in diffusion can be characterized by its speed measure $m$ and its jumping-in measure $j$, both of which are Radon measures on $(0,\infty)$. The jumping-in diffusion is a strong Markov process on $[0,\infty)$ which behaves as a $\frac{d}{dm}\frac{d^+}{dx}$-diffusion during staying in $(0,\infty)$ and jumps from the origin to $(0,\infty)$ according to $j$. We denote the jumping-in diffusion by $X_{m,j}$ and its inverse local time by $\eta_{m,j}$. For the precise description, see Section \ref{section: contthmofinverselocaltime}. One of the main results in Yano\cite{Yano:Convergenceofexcursion} is the following: 
 \begin{Thm}{\rm (Yano\cite[Theorem 2.6]{Yano:Convergenceofexcursion})}\label{Thm:Yano1}
	Let $\alpha \in (1,\infty)$ and $0 < \beta < 1/\alpha$.
 	Assume the following:
 	\begin{enumerate}
 		\item For a slowly varying function $K$ at $\infty$,
 		\begin{align}
 			m(x,\infty) \sim (\alpha - 1)^{-1}x^{1/\alpha - 1}K(x) \ (x \to \infty). \label{(M)_alpha} 
 		\end{align}
 		\item $j(x,\infty) \sim x^{-\beta}L(x)$ as $x \to \infty$ for a slowly varying function $L$ at $\infty$. 
 	\end{enumerate}
 	Then (with some technical conditions) we have the following convergence on $\bD$ in $J_1$-topology:
 	\begin{align}
 	\frac{1}{\gamma}X_{m,j}(\gamma^{1/\alpha} K(\gamma) t) &\xrightarrow[\gamma \to \infty]{d} X_{m^{(\alpha)}, j^{(\beta)}}(t), \label{} \\
 	\frac{1}{\gamma^{1/\alpha} K(\gamma)}\eta_{m,j}\left(\frac{\gamma^{\beta} t}{L(\gamma)}\right) &\xrightarrow[\gamma \to \infty]{d} S^{(\alpha\beta)}( t), \label{sl2}
 	\end{align}
 	where $m^{(\alpha)}(x,\infty) \propto x^{1/\alpha - 1}$ and $j^{(\beta)}(dx) \propto \beta x^{- \beta - 1}dx$ so that $X_{m^{(\alpha)}, j^{(\beta)}}$ is a $\alpha \beta$-self-similar jumping-in diffusion and $S^{(\alpha\beta)}$ is an $\alpha\beta$-stable subordinator. By the symbol $\propto$, we mean that the both sides coincide up to a multiplicative constant.
 \end{Thm}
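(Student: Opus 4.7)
The plan is to reduce both convergences to scaling properties of the jumping-in excursion measure $n_{m,j}(d\omega) = \int j(dx)\,P_x^{(m)}(d\omega)$, where $P_x^{(m)}$ denotes the law of the $\frac{d}{dm}\frac{d^+}{dx}$-diffusion started from $x$ and absorbed at $0$. The stable-subordinator convergence \eqref{sl2} follows from the scaling of the length functional $\zeta(\omega) = T_0$, while the convergence of $X_{m,j}$ follows from the scaling of $n_{m,j}$ as a whole via It\^o's excursion synthesis.

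For \eqref{sl2}, we use the fact that, by It\^o's excursion theory, $\eta_{m,j}$ is a subordinator whose L\'evy measure is
\begin{equation*}
\nu_{m,j}(dt) \;=\; \int_{(0,\infty)} P_x^{(m)}(T_0 \in dt)\,j(dx).
\end{equation*}
Assumption (i) on $m$, via the Krein--Kotani correspondence that underlies Theorem \ref{Thm: Kasahara and Watanebe 1}, yields the hitting-time scaling
\begin{equation*}
P_{\gamma y}^{(m)}\bigl(T_0 > \gamma^{1/\alpha}K(\gamma)\,s\bigr) \;\xrightarrow[\gamma\to\infty]{}\; P_y^{(m^{(\alpha)})}(T_0 > s) \qquad (y,s>0).
\end{equation*}
Combined with assumption (ii) and the change of variables $x = \gamma y$, together with the observation that $\gamma^\beta L(\gamma)^{-1}\,j(\gamma\,dy)$ converges vaguely on $(0,\infty)$ to $\beta y^{-\beta-1}\,dy$, we obtain
\begin{equation*}
\frac{\gamma^\beta}{L(\gamma)}\,\nu_{m,j}\bigl(\gamma^{1/\alpha}K(\gamma)\,s,\,\infty\bigr) \;\xrightarrow[\gamma\to\infty]{}\; \int_0^\infty P_y^{(m^{(\alpha)})}(T_0 > s)\,\beta\,y^{-\beta-1}\,dy \;=\; c_\star\,s^{-\alpha\beta},
\end{equation*}
the last equality following from the self-similarity of the $m^{(\alpha)}$-diffusion. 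Regular variation of $\nu_{m,j}$ of index $-\alpha\beta$ then delivers \eqref{sl2} by standard convergence criteria for subordinators.

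For the convergence of $X_{m,j}$, the cleanest route is to apply the same rescaling to the whole excursion path rather than just its length. Under $\omega \mapsto \omega^{(\gamma)}$ defined by $\omega^{(\gamma)}(u) = \gamma^{-1}\omega\bigl(\gamma^{1/\alpha}K(\gamma)\,u\bigr)$ on the excursion space, and with the corresponding normalization $\gamma^\beta/L(\gamma)$ in the local-time variable, the intensity of the rescaled excursion point process converges vaguely to $n_{m^{(\alpha)},j^{(\beta)}}$. Vague convergence of Poisson intensities, combined with a tightness argument for the reconstructed paths, then gives the convergence of the rescaled $X_{m,j}$ to $X_{m^{(\alpha)},j^{(\beta)}}$ on $\bD$ by continuity of It\^o's synthesis.

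The main technical obstacle lies in the $J_1$-tightness of the rescaled $X_{m,j}$. Short excursions can accumulate in a delicate way, and one must control their joint contribution so that they neither destroy tightness nor create spurious oscillations at the scaling limit. This is where the ``technical conditions'' alluded to in the statement enter, typically as integrability requirements on $j$ near $0$ (or equivalent conditions on the small excursions of the $m$-diffusion) ensuring that the small-jump part of $\eta_{m,j}$, which produces excursions of vanishing length in the limit, does not spoil the Skorokhod convergence of the reconstructed paths.
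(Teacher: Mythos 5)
This theorem is quoted in the paper from Yano (\cite[Theorem 2.6]{Yano:Convergenceofexcursion}); the present paper does not contain a proof of it, so there is no in-paper argument to compare against. That said, your sketch is, in substance, exactly the strategy of the cited reference: rescale the jumping-in excursion measure $n_{m,j}(\cdot)=\int_0^\infty P_x^{(m)}(\cdot)\,j(dx)$ in both space and excursion time, establish vague convergence of the rescaled intensity $(\gamma^{\beta}/L(\gamma))\,\tilde n_\gamma$ to $n_{m^{(\alpha)},j^{(\beta)}}$, read off the stable-subordinator limit for $\eta_{m,j}$ from the induced convergence of $T_0$-lengths, and transfer the intensity convergence to $X_{m,j}$ by continuity of It\^o's excursion synthesis plus a tightness argument --- the title of the reference (``Convergence of excursion point processes and its applications to functional limit theorems of Markov processes on a half-line'') already signals this. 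One substantive remark on your sketch: the passage from pointwise hitting-time scaling
\begin{equation*}
P^{(m)}_{\gamma y}\bigl(T_0>\gamma^{1/\alpha}K(\gamma)\,s\bigr)\longrightarrow P^{(m^{(\alpha)})}_y(T_0>s)
\end{equation*}
to the asymptotics of $\nu_{m,j}$ requires a dominated-convergence step under $j(\gamma\,dy)$, and the dominating bound must be integrable against $\beta y^{-\beta-1}\,dy$ both near $y=0$ (where $P_y(T_0>s)\asymp y$, so one needs $\beta<1$, and in fact the technical condition (C) on $\int_0 x\,j(dx)$) and near $y=\infty$ (where $P_y(T_0>s)\to 1$, so one needs $\beta>0$). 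This uniform integrability is precisely where the restriction $0<\beta<1/\alpha$ and the ``technical conditions'' actually bite, so it deserves to be made explicit rather than left implicit in the vague-convergence phrase.
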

 
 Roughly speaking, this result says under the big jump condition (the assumption (ii)), the scaling limit of the process $X_{m,j}$ converges to a jumping-in diffusion. 
 If we replace the assumption (ii) by a small jump condition, i.e. 
 \begin{align}
 	\text{(ii)}' \ \int_{0}^{\infty}xj(dx) < \infty, \label{}
 \end{align}
 the scaling limit of $X_{m,j}$ does not exist. The reason is as follows: Under the condition (ii)', the scaling limit of $X_{m,j}$ must be a diffusion (without any jumps at all). This can be seen from Theorem 2.5 of Yano\cite{Yano:Convergenceofexcursion}. However, the origin for $\frac{d}{dm^{(\alpha)}}\frac{d^+}{dx}$ is an exit boundary for $\alpha >1$ so that the process cannot exist without jumping-in. There is still a possibility that the scaling limit of $\eta_{m,j}$ exists, and it is the problem which we tackle in the present paper.
 
%
 
 
 
 \subsection{Main results}
 
 Our aim is to establish the scaling limits for positive recurrent jumping-in diffusions with small jumps by focusing on their fluctuations.
 
 The following theorem gives the fluctuation scaling limit of the inverse local times of unilateral jumping-in diffusions. For the precise statement, see Theorem \ref{alpha1-2}.
 
 \begin{Thm}\label{main-informal-1}
 	Let $\alpha \in (1,2)$. 
 	Suppose the following hold:
 	\begin{enumerate}
 		\item $\int_{0+}m(x)^2dx < \infty$,
 		\item $m(x,\infty) \sim (\alpha - 1)^{-1}x^{1/\alpha - 1}K(x) \ (x \to \infty)$ for a slowly varying function $K$ at $\infty$,
 		\item $\kappa := \int_{0}^{\infty}xj(dx) < \infty$.
 	\end{enumerate}
 	Then (with some technical conditions) we have
 	\begin{align}
 	&g(\gamma)\left(\frac{\eta_{m,j}(\gamma t)}{\gamma} - b t\right) \xrightarrow[\gamma \to \infty]{d}S^{(\alpha)}(\kappa t) \ \text{on}\ \bD,
 	\end{align}
 	where $S^{(\alpha)}$ is a spectrally positive strictly $\alpha$-stable process whose Laplace exponent is given as 
 	\begin{align}
 		\bE[\mathrm{e}^{-\lambda S^{(\alpha)}(t)}] = \mathrm{e}^{-t \chi(\lambda)}, \ \chi(\lambda) = -\frac{\Gamma(2-\alpha)}{\Gamma(\alpha)} \frac{\alpha^{\alpha-1}}{\alpha-1} \lambda^{\alpha} \ ( \lambda > 0), \label{eq160}
 	\end{align}
 	 and
 	\begin{align}
 	b = \int_{0}^{\infty}j(dx)\int_{0}^{x}m(y,\infty)dy, \ g(\gamma) = \frac{1}{ \gamma^{1/\alpha}K(\gamma)}. \label{}
 	\end{align}
 \end{Thm}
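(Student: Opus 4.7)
The strategy is to identify $\eta_{m,j}$ as a subordinator with Laplace exponent $\Phi_{m,j}$, reduce the functional convergence on $\bD$ to the pointwise convergence of the compensated Laplace exponent at the origin, and establish this via a decomposition isolating a principal term controlled by Theorem~1.1. By the standard L\'evy-process convergence criterion, since $\eta_{m,j}$ has finite mean $b$, it suffices to show
\begin{equation*}
\gamma\bigl[\Phi_{m,j}(\mu_\gamma) - b\,\mu_\gamma\bigr] \longrightarrow \kappa\,\chi(\lambda), \qquad \mu_\gamma := \lambda g(\gamma)/\gamma,
\end{equation*}
for every $\lambda > 0$, with $\chi$ the exponent of \eqref{eq160}.

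By It\^o's excursion theory at $0$, the excursion measure of $X_{m,j}$ is $\int_0^\infty j(dx)\,P_x$, where $P_x$ is the law of the underlying $\frac{d}{dm}\frac{d^+}{dx}$-diffusion started at $x$ and killed at $0$, so
\begin{equation*}
\Phi_{m,j}(\lambda) = \int_0^\infty (1-\psi(\lambda;x))\,j(dx), \qquad \psi(\lambda;x) := \bE_x[\mathrm{e}^{-\lambda T_0}].
\end{equation*}
The function $\psi(\lambda;\cdot)$ is the bounded positive solution of $\frac{d}{dm}\frac{d^+}{dx}\phi = \lambda\phi$ with $\phi(0)=1$. Writing $h_m(\lambda) := -\frac{d^+}{dx}\psi(\lambda;0+)$ for Krein's function of $m$ (which by the Krein--Kotani correspondence coincides with the Laplace exponent of the Kasahara--Watanabe renormalized inverse local time $T(m;\cdot)$) and integrating the eigenvalue equation twice yields the Volterra identity
\begin{equation*}
1-\psi(\lambda;x) = h_m(\lambda)\,x - \lambda\int_0^x \psi(\lambda;v)(x-v)\,m(dv).
\end{equation*}
Substituting into $\Phi_{m,j}$ and applying Fubini with the tail kernel $J(v) := \int_v^\infty (x-v)\,j(dx)$ produces the key decomposition
\begin{equation*}
\Phi_{m,j}(\lambda) = \kappa\,h_m(\lambda) - \lambda\int_0^\infty \psi(\lambda;v) J(v)\,m(dv).
\end{equation*}
Differentiating at $\lambda=0$ (using $h_m'(0+)=m(\infty)$) and applying a second Fubini recovers $\Phi_{m,j}'(0+) = \kappa m(\infty) - \int J\,dm = \int_0^\infty j(dx)\int_0^x m(y,\infty)\,dy = b$, and subtracting the drift gives
\begin{equation*}
\Phi_{m,j}(\lambda) - b\lambda = \kappa\bigl[h_m(\lambda) - m(\infty)\lambda\bigr] + \lambda\int_0^\infty [1-\psi(\lambda;v)] J(v)\,m(dv).
\end{equation*}

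The principal term is what Theorem~1.1 controls: reformulated via Laplace exponents, it asserts $\gamma[h_m(\mu_\gamma) - m(\infty)\mu_\gamma] \to \chi(\lambda)$, so multiplication by $\kappa$ yields the announced limit. For the residual, after multiplication by $\gamma$ it becomes $\lambda g(\gamma)\int[1-\psi(\mu_\gamma;v)] J(v)\,m(dv)$; since $J(v)\le\kappa$ and under (i)--(ii) the total mass $m(\infty)$ is finite, the measure $J(v)\,m(dv)$ is finite, so dominated convergence (with $\psi(\mu_\gamma;v)\to 1$ pointwise) forces the integral to tend to $0$, which combined with $g(\gamma)\to 0$ makes the residual vanish. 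The main obstacle is the rigorous setup of the Krein--Kotani framework for $\psi$ and $h_m$ in our positive-recurrent-with-small-jumps setting, where the boundary $0$ need not be regular for the underlying diffusion (so the classical local time may fail to exist and $\int_0^x m(dv)$ may diverge as $x \downarrow 0$). This is precisely the role of the ``eigenfunctions with modified Neumann boundary condition'' highlighted in the abstract: a renormalized choice at $0$ that makes $\psi(\lambda;0+)=1$ and $h_m(\lambda) = -\frac{d^+}{dx}\psi(\lambda;0+)$ well-defined limits and validates the Volterra identity and Fubini manipulations above. The ``technical conditions'' hidden in the informal statement are the hypotheses under which this modified Krein--Kotani setup applies.
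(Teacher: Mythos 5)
Your overall strategy---reduce functional convergence on $\bD$ to pointwise convergence of the compensated Laplace exponent, represent $\chi_{m,j}(\lambda)=\int_0^\infty(1-g_m(\lambda;x))\,j(dx)$ via the excursion measure, and split off a Krein-type spectral quantity as the principal term with a vanishing residual---is indeed the spirit of the paper's proof (compare \eqref{eq94}, Theorem~\ref{convtojump}, Proposition~\ref{continuityofLT}, and the scaling reduction in Theorem~\ref{alpha1-2}). But the concrete decomposition you write down fails precisely in the case the theorem is really about, and the step you wave away at the end is where the paper's new machinery does all the work.

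The crux is that the boundary $0$ is an exit boundary in the main case of interest, i.e.\ $m(0+)=-\infty$ (the paper explicitly restricts to this by taking $c=0$). Then your quantity $h_m(\lambda):=-\frac{d^+}{dx}g_m(\lambda;0+)$ (your $\psi$ is the paper's $g_m$; the paper's $\psi_m$ is a different, Dirichlet, eigenfunction) is $+\infty$: when $m(0+)$ is finite one has $-g_m'(\lambda;0+)=\lambda H_m(\lambda)-\lambda m(0+)$ by Propositions~\ref{Kreinfact} and \ref{Krein}, and this diverges as $m(0+)\downarrow-\infty$. Correspondingly, $\int_0^x g_m(\lambda;v)(x-v)\,dm(v)=\infty$, since $dm$ has infinite mass near $0$ while $g_m(\lambda;v)(x-v)\to x>0$. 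So your Volterra identity is an $\infty-\infty$ statement, your Fubini manipulations are unjustified, and the verification $\kappa\,m(\infty)-\int J\,dm=b$ requires $\int_0^x(x-v)\,dm(v)=G_m(x)$, which by parts equals $G_m(x)-x\,m(0+)$ and so holds only if $m(0+)=0$. The residual estimate has the same defect: $J(v)\,dm(v)$ is \emph{not} a finite measure (bounded $J$ times infinite-mass $dm$ near $0$), so ``dominated convergence'' against it is not available as stated, and one in fact needs quantitative bounds on $1-g_m(\lambda;v)$ near $0$ uniform in the scaling parameter. Finally, $h_m$ is not the Laplace exponent of the Kasahara--Watanabe $T(m;\cdot)$; that exponent is $\lambda H_m(\lambda)$, which differs from $-g_m'(\lambda;0+)$ by the (possibly infinite) term $\lambda m(0+)$.

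What the paper does instead is replace the divergent linear approximation $1-g_m(\lambda;x)\approx h_m(\lambda)\,x$ by the finite one
\begin{align*}
1-g_m(\lambda;x)+\lambda G_m(x)\ \approx\ \lambda H_m(\lambda)\,x\qquad(x\to0),
\end{align*}
with $G_m(x)=\int_0^x m(y)\,dy$ finite by $\int_{0+}m^2\,dx<\infty$ and Schwarz (Proposition~\ref{diffcirc}); the centering is correspondingly taken as $b_n=-\int_0^1 G_{m_n}\,dj_n$. This is organized through the modified Neumann eigenfunction $\varphi^1_m$, the decomposition $g_m=\varphi^1_m-c^1_m\psi_m$, and the identity $c^1_m(\lambda)=\lambda H_m(\lambda)-\lambda m(1)$ established via the Krein--Kotani correspondence (Theorem~\ref{charfunc}); the convergence $H_{m_\gamma}\to H_{m^{(\alpha)}}$ along scaled strings comes from Theorem~\ref{convequiv}, not directly from Theorem~\ref{Thm: Kasahara and Watanebe 1}; and the residuals are controlled by the explicit series estimates of Propositions~\ref{estofpsi} and \ref{estofphi^d} (see \eqref{eq126}, \eqref{eq127}), not by a one-line dominated convergence. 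Your proposal correctly locates the reduction to Laplace exponents and the shape of the answer, but as written it is only valid for $m(0+)$ finite; supplying the missing cancellation of infinities is exactly the content you would need to add, and it is the non-routine part of the paper.
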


 The following theorem gives the fluctuation scaling limit of the occupation times of bilateral jumping-in diffusions. For the precise statement, see Theorem \ref{flucofA alpha1-2}.
 
 \begin{Thm}\label{main-informal-2}
	Let $\alpha \in (1,2)$. Suppose the following hold:
 	\begin{enumerate}
 		\item $\int_{0+}m_{\pm}(x)^2dx < \infty$,
 		\item $m_\pm(x,\infty) \sim w_{\pm}(\alpha - 1)^{-1}x^{1/\alpha -1}K (x) \  (x \to \infty)$ for  constants $\alpha  \in (1,2)$ and $w_{\pm} > 0$ and a slowly varying function $K$ at $\infty$, respectively,
 		\item $\kappa_\pm:= \int_{0}^{\infty}xj_\pm(dx) < \infty$.
 	\end{enumerate}
 	Then (with some technical conditions) we have
 	\begin{align}
 	g(\gamma)\left(\frac{A(\gamma t)}{\gamma} - p t\right) \xrightarrow[\gamma \to \infty ]{f.d.} (1-p)w_+S^{(\alpha)}(\tilde{\kappa}_+t) - pw_-\tilde{S}(\tilde{\kappa}_- t), \label{}
 	\end{align}
 	where 
 	\begin{align}
 	b_\pm &= 
 	\int_{0}^{\infty}j_\pm(dx)\int_{0}^{x}m_\pm(y,\infty)dy,\quad p = \frac{b_+}{b_+ + b_-}, \quad g(\gamma) = \frac{\gamma}{\gamma^{1/\alpha}K(\gamma)}, \label{} \\
 	 \tilde{\kappa}_\pm &= \frac{\kappa_\pm}{b_+ + b_-} \label{}
 	\end{align}
 	and, $S^{(\alpha)}$ and $\tilde{S}^{(\alpha)}$ are i.i.d. $\alpha$-stable processes characterized by \eqref{eq160}.
 \end{Thm}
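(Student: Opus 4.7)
The plan is to reduce to the inverse-local-time result of Theorem \ref{main-informal-1} via the standard excursion-theoretic decomposition of the zero set of $X$, adapting the Kasahara-Watanabe strategy (Theorem \ref{Thm: Kasahara and Watanabe 2}) to the jumping-in setting. Let $L$ denote the local time at $0$, and decompose its right-continuous inverse as $\eta(t) = \eta_+(t) + \eta_-(t)$, where $\eta_\pm(t)$ is the accumulated sojourn of $X$ in $(0,\infty)$ (resp.\ $(-\infty,0)$) up to local-time level $t$. The Poisson-point-process structure of excursions from $0$, together with the decomposition $j = j_+ + j_-$ of the jumping-in measure into the two sides, implies that $\eta_+$ and $\eta_-$ are independent subordinators; in fact, $\eta_\pm$ coincides in law with the inverse local time of the unilateral jumping-in diffusion $X_{m_\pm, j_\pm}$. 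The bridge to the occupation time is the almost sure identity $A(\eta(t)) = \eta_+(t)$, equivalently $A(s) = \eta_+(L(s))$ up to an overshoot error bounded by the length of the current excursion at time $s$, which will be negligible on the fluctuation scale.

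Applying Theorem \ref{main-informal-1} separately on each side and using independence yields the joint convergence
\begin{align}
g(\gamma)\left(\frac{\eta_\pm(\gamma u)}{\gamma} - b_\pm u\right) \xrightarrow[\gamma \to \infty]{d} w_\pm S_\pm^{(\alpha)}(\kappa_\pm u) \quad \text{on}\ \bD,
\end{align}
with $S_+^{(\alpha)}, S_-^{(\alpha)}$ independent copies of the spectrally positive $\alpha$-stable process in \eqref{eq160} (to be identified with $S^{(\alpha)}$ and $\tilde{S}^{(\alpha)}$). The same normalizing $g$ works on both sides because the slowly varying factors $w_\pm K$ have the same regular-variation index as $K$. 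Summing the two limits gives the first-order behaviour $\eta(\gamma u)/\gamma \to (b_+ + b_-) u$ in probability, whence $u_\gamma := L(\gamma t)/\gamma \to u_\infty := t/(b_+ + b_-)$ in probability.

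The concluding step is a delta-method inversion. Using $A(\gamma t) = \eta_+(\gamma u_\gamma)$ and $\gamma t = \eta(\gamma u_\gamma)$ modulo negligible overshoots, together with the cross cancellation $(1-p) b_+ - p b_- = 0$, a direct algebraic manipulation gives
\begin{align}
\frac{A(\gamma t)}{\gamma} - p t \;=\; (1-p)\left(\frac{\eta_+(\gamma u_\gamma)}{\gamma} - b_+ u_\gamma\right) - p\left(\frac{\eta_-(\gamma u_\gamma)}{\gamma} - b_- u_\gamma\right) + \varepsilon_\gamma,
\end{align}
where $\varepsilon_\gamma$ collects the overshoot contributions. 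Multiplying by $g(\gamma)$, substituting $u_\gamma \to u_\infty$ and using $\kappa_\pm u_\infty = \tilde{\kappa}_\pm t$, the limit is produced by Theorem \ref{main-informal-1} together with Slutsky's lemma. The main obstacle is the rigour of this inversion: because the limits $S^{(\alpha)}$ and $\tilde{S}^{(\alpha)}$ have jumps, composition with the random inverse $L$ is not $J_1$-continuous, which is precisely why only finite-dimensional convergence — rather than functional $J_1$-convergence — is available, exactly as in Kasahara-Watanabe. Concretely one must bound the overshoot $|A(\gamma t) - \eta_+(L(\gamma t))|$ and check enough joint regularity of $(\eta_+, \eta_-, L)$ at each fixed finite collection of times to justify the substitution $u_\gamma \mapsto u_\infty$ inside the stable limit.
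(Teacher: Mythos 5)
Your proposal follows essentially the same route as the paper's proof of Theorem \ref{flucofA alpha1-2}: the sandwich identity $(1-p)\eta_{m_+,j_+}(\ell(t)-) - p\eta_{m_-,j_-}(\ell(t)) \leq A(t)-pt \leq (1-p)\eta_{m_+,j_+}(\ell(t)) - p\eta_{m_-,j_-}(\ell(t)-)$, the cancellation $(1-p)b_+ = pb_-$, control of the overshoot $\Delta\eta(\ell(\gamma t))$, and random-time-change composition leading only to finite-dimensional convergence. The two places you flag as needing rigour are precisely where the paper invests effort: the overshoot estimate uses the excursion-tail asymptotic \eqref{eq110} from Theorem \ref{alpha1-2} (rather than a generic "negligibility" argument), and the inversion step is not a Slutsky/delta-method application but a composition theorem applied to the joint $J_1$-convergence of $\bigl(g(\gamma)\tilde\eta(\gamma\cdot),\,\eta(\gamma\cdot)/\gamma\bigr)$, yielding $M_1$-convergence of $g(\gamma)\tilde\eta(\ell(\gamma\cdot))$, which then implies finite-dimensional convergence because the limit has no fixed jumps.
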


 We also show Theorem \ref{main-informal-1} and \ref{main-informal-2} for $\alpha = 2,1$. See Section \ref{section: alpha2}, \ref{section: alpha1}, respectively.

 \subsection{Our strategy for the proofs}
 
%

 For one-dimensional diffusions (without negative jumps at all), Kasahara and Watanabe\cite{KasaharaWatanabe:Brownianrepresentation} proved Theorems \ref{Thm: Kasahara and Watanebe 1} and \ref{Thm: Kasahara and Watanabe 2} by utilizing the Brownian time-change method in order to apply It\^o formula. But we do not adopt the time-change method. 
 We explain our strategy to show Theorems \ref{main-informal-1} and \ref{main-informal-2}.
   
 
 We show Theorem \ref{main-informal-2} as an application of Theorem \ref{main-informal-1}.
 The key to the proof is to establish the tail behavior of the L\'evy measure of the inverse local time, which is obtained by applying Tauberian theorems to Theorem \ref{main-informal-1}.
 We then appeal to the It\^o excursion theory which connects the occupation time with the inverse local time.
  
 Our proof of Theorem \ref{main-informal-1} is done by proving a kind of continuity of the Laplace exponent of inverse local times. 
 To characterize the Laplace exponent of $\eta_{m,j}$, we introduce the eigenfunctions subject to the modified Neumann boundary condition.
 
 When the boundary $0$ for $dm$ is regular, we have a unique solution $u = \varphi_m(\lambda;\cdot)$ to
 \begin{align}
 	\frac{d}{dm}\frac{d^+}{dx}u = \lambda u, \  u(0) = 1, \  u^+(0) = 0, \label{}
 \end{align}
 and a unique solution $u = \psi_m(\lambda;\cdot)$ to
 \begin{align}
 	\frac{d}{dm}\frac{d^+}{dx}u = \lambda u, \  u(0) = 0, \  u^+(0) = 1. \label{}
 \end{align}
 We can exploit the two functions $\varphi_m$ and $\psi_m$ to analyze the $\frac{d}{dm}\frac{d^+}{dx}$-diffusion.
 When the boundary $0$ for $dm$ is exit, we still have $\psi_{m}$ but do not $\varphi_m$. We would like to introduce a function which plays the role of $\varphi_m$. We assume $\int_{0+}m(x)^2dx < \infty$, and then we can prove existence of a unique solution $u = \varphi^1_m$ to
 \begin{align}
 	\frac{d}{dm}\frac{d^+}{dx}u = \lambda u, \  u(0) = 1, \ \lim_{x \to +0}(u^+(x) - \lambda (m(x) - m(1))) = 0. \label{}
 \end{align} 
 We utilize the function $\varphi^1_m$ as follows.
 It is shown in Section \ref{section: contthmofinverselocaltime} that the Laplace exponent $\chi_{m,j}$ of the inverse local time $\eta_{m,j}$ is represented as follows:
 \begin{align}
 \chi_{m,j}(\lambda) = \int_{0}^{\infty}( 1 - g_m(\lambda;x) )j(dx). \label{}
 \end{align} 
 Here $u = g_m(\lambda;\cdot)$ is the unique non-increasing solution to 
 \begin{align}
 	\frac{d}{dm}\frac{d^+}{dx}u = \lambda u, \  u(0) = 1, \  \lim_{x \to \infty}u^+(x) = 0. \label{}
 \end{align}
 Then for a suitable constant $c^1_m(\lambda)$, we have the following expression:
 \begin{align}
 	g_m(\lambda;x) = \varphi^1_m(\lambda;x) - c^1_m(\lambda) \psi_m(\lambda;x). \label{eq159}
 \end{align}
 Therefore for a sequence of speed measures $\{dm_n\}_n$, jumping-in measures $\{j_n\}_n$ and constants $\{b_n\}_n$, the Laplace exponent $\tilde{\chi}_{m_n,j_n,b_n}$ of the process $\eta_{m_n,j_n}(t) - b_n t$ is the following:
 \begin{align}
 	\tilde{\chi}_{m_n,j_n,b_n}(\lambda) &= \chi_{m_n,j_n}(\lambda) - b_n\lambda \label{} \\
 	&= \left(\int_{0}^{\infty}(1 - \varphi^1_{m_n}(\lambda;x))j_n(dx) - b_n\lambda\right)
 	+ c^1_{m_n}(\lambda)\int_{0}^{\infty}\psi_{m_n}(\lambda;x)j_n(dx). \label{eq91}
 \end{align}
 Hence our study is reduced to the proper choice of $\{b_n\}_n$ and the analysis of the two terms in RHS of \eqref{eq91}. We show that the convergence of these terms follows from the convergence of $\{dm_n\}_n$ and $\{j_n\}_n$ in a certain sense.
 
 In our argument, the results in Kotani\cite{Kotani:Krein'sstrings} play a crucial role.
 He showed that strings $m$ satisfying $\int_{0+}m(x)^2dx < \infty$ have one-to-one correspondence to a class of Herglotz functions, and he also showed the correspondence is bi-continuous in some sense. See Section \ref{section: preliminary} for the detail.
 These results are an extension of the Krein correspondence which has been used in the studies of one-dimensional diffusions (see e.g. Kotani and Watanabe\cite{KotaniWatanabe:Krein'sspectraltheory} or Kasahara\cite{Kasahara:Spectraltheory}).
 Applying his result, we can obtain the explicit form of the coefficient $c^1_{m}(\lambda)$ in \eqref{eq159} and its continuous dependence on $m$.
  
 Let us remark on the non-degenerate case, that is, $\frac{1}{t}A(t)$ converges in law to a non-degenerate distribution. This case falls down to Lamperti's generalized arcsine law, which has been thoroughly studied in Watanabe\cite{Watanabe:Arcsinelaw} in the context of one-dimensional diffusions. He gave necessary and sufficient conditions for the convergence by the methods of double Laplace transforms and Williams formula via the excursion theory. His methods are still valid in our situation. See Appendix \ref{appendix: non-degenerate case}.

 \subsection{Outline of the paper}
 The remainder of the present paper is organized as follows: In Section \ref{section: preliminary}, we briefly review some results on Feller's classification of boundary and the Krein-Kotani correspondence. In Section \ref{section: constofphi}, we construct the functions $\varphi^1_m$ which play a role corresponding to $\varphi_m$ when the boundary $0$ is exit and establish some elementary estimates for them. In Section \ref{section: represenofg_lambda}, we represent $g_m$ as a linear combination of $\varphi^1_m$ and $\psi_m$ and, determine the coefficient. In Section \ref{section: contthmofinverselocaltime}, we show a continuity of inverse local times with respect to their speed measures and jumping-in measures. In Section \ref{section: scalinglimit}, we study the fluctuation scaling limit of inverse local times of jumping-in diffusion for scale parameter $\alpha \in (1,2)$.
 In Section \ref{section: arcsinelaw}, we discuss the fluctuation of the occupation time of bilateral jumping-in diffusions. In Section \ref{section: alpha2} and \ref{section: alpha1}, we treat the case scale parameter $\alpha = 2,1$, respectively. 
 In appendix \ref{appendix: contthmofLT}, we show a continuity theorem for Laplace transforms of spectrally positive  L\'evy processes.
 In appendix \ref{appendix: non-degenerate case}, we treat the case $\frac{1}{t}A(t)$ converges to non-degenerate distribution.
  \\
 
{\bf Acknowledgements} 
We would like to thank Shinichi Kotani, who read an early draft of this paper and gave us valuable comments. The research of Kouji Yano was supported by JSPS KAKENHI Grant Number
JP19H01791, JP19K21834 and JP18K03441.
 
 \section{The Krein-Kotani correspondence}\label{section: preliminary}
 	We consider the state interval $(a,\infty)$ for $a = 0$ or $-\infty$.
 	Let $dm$ be a Radon measure on $(a, \infty)$ with full support and let $s$ be a strictly increasing continuous function $(a,\infty)$.
 	We define 
 	\begin{align}
 		I = \int_{a}^{1}ds(y)\int_{a}^{y}dm(z), \ J = \int_{a}^{1}dm(y)\int_{a}^{y}ds(z). \label{}
 	\end{align}
 	Feller's classification of the boundary $a$ is as follows:
 	\begin{align}
 		\text{
 			\begin{tabular}{c||c|c}
 			& $I < \infty$ & $I = \infty$ \\ \hline \hline
 			$J < \infty$ & regular & exit \\ \hline
 			$J = \infty$ & entrance & natural 			
 			\end{tabular}
 		}
 		\end{align}
 	It is well-known that for such $m$ and $s$, there is a diffusion $X$ on $(a,\infty)$ or $[a, \infty)$ whose local generator on $(a,\infty)$ is given as $\frac{d}{dm}\frac{d}{ds}$. We call $m$ the speed measure of $X$ and $s$ the scale function of $X$.
 	It is also well-known that if the boundary $a$ is regular or exit, we have such a diffusion $X$ and $X$ hits or is killed at $a$ in finite time. When the boundary $a$ is regular, the boundary $a$ can be reflecting, absorbing or elastic and, when $a$ is exit, the boundary $a$ for $X$ is necessarily absorbing. When the boundary $a$ is natural or entrance, such a diffusion $X$ also exists and $X$ does not approach $a$ in finite time. For a diffusion $X$ with the generator $\frac{d}{dm}\frac{d}{ds}$,
 	the process $s(X)$ is also a diffusion and its generator is represented as $\frac{d}{d\tilde{m}}\frac{d^+}{dx}$, where $\tilde{m}(x)= m( s^{-1}(x))$. We say that $s(X)$ is the diffusion $X$ under the natural scale. We may always assume natural scale without loss of generality. 
 	
 	 				
 	Let us consider the state interval $(-\infty,\infty)$.
	We briefly summarize some results on the Krein-Kotani correspondence.
	See Kotani\cite{Kotani:Krein'sstrings} for the details.
	A function $w: (-\infty,\infty) \to [0,\infty]$ is called a {\it string} on $(-\infty, \infty)$ when $w$ is non-decreasing and right-continuous.
	We denote as $\cM_{\mathrm{circ}}$ the set of strings $w: (-\infty,\infty) \to [0,\infty]$ satisfying
	\begin{align}
		\int_{-\infty}^{b}x^2dw(x) < \infty \label{}
	\end{align}
	for some $b \in \bR$.
	For an element $w \in \cM_{\mathrm{circ}}$ and $\lambda > 0$, we consider the solution $u = f_w(\lambda;\cdot)$ to the following ODE:
	\begin{align}
		\frac{d}{dw}\frac{d^+}{dx}u = \lambda u, \ u(-\infty) = 1, \ u^+(-\infty) = 0 \ (x < \ell). \label{}
	\end{align}
	Here $\ell = \inf \{ x \in \bR \mid w(x) = \infty \}$.
		Then define
		\begin{align}
			h_w(\lambda) = b + \int_{-\infty}^{b}\left( \frac{1}{f_w(\lambda;x)^2} - 1 \right)dx + \int_{b}^{\ell}\frac{dx}{f_w(\lambda;x)^2} \ (\lambda > 0). \label{}
		\end{align}
		for some $b \in \bR$. Note that $h_w(\lambda)$ is finite for every $\lambda > 0$ and the function $h_w$ does not depend on the choice of $b$. Moreover, if we define $\tilde{h}_w(-\lambda) = h_w(\lambda) \ (\lambda > 0)$, then the function $\tilde{h}_w$ is a Herglotz function, that is, $\tilde{h}_w$ can be extended to the holomorphic function on the upper half plane and, it maps the upper half plane to itself. Hence from the general theory of Herglotz functions, for a constant $\alpha \in \bR$ and a Radon measure $\sigma$ on $[0,\infty)$ with $\int_{0}^{\infty}\frac{\sigma(d\xi)}{\xi^2 + 1} < \infty$, we have the following expression:
		\begin{align}
			h_w(\lambda ) = \alpha + \int_{0-}^{\infty}\left( \frac{1}{\xi + \lambda} - \frac{\xi}{\xi^2 + 1} \right)\sigma(d\xi). \label{eq103}
		\end{align}
		We note that the measure $\sigma$ in RHS of \eqref{eq103} is the spectral measure of the differential operator $-\frac{d}{dw}\frac{d^+}{dx}$. Hence we call $h(w;\cdot)$ the spectral characteristic function of $w$.
		
		Let $\cH$ be the set of functions which are expressed in the form of RHS of \eqref{eq103} for a constant $\alpha \in \bR$ and a Radon measure $\sigma$ on $[0,\infty)$ with $\int_{0}^{\infty}\frac{\sigma(d\xi)}{\xi^2 + 1} < \infty$.
		It was proved in Kotani\cite{Kotani:Krein'sstrings} that the map 
		$\cM_{\mathrm{circ}} \ni w \mapsto h_w \in \cH$ is bijective.
		We call this correspondence the Krein-Kotani correspondence.
		
		Let us consider the state interval $(0,\infty)$. A function $m: (0,\infty) \to (-\infty, \infty)$ is called a {\it string} on $(0,\infty)$ when $m$ is non-decreasing and right-continuous.
		We introduce the set of strings $\cM_1$ which we mainly treat in the present paper. A string $m$ is an element of $\cM_1$ when it is strictly increasing, right-continuous and satisfies $\int_{0+}m(x)^2dx < \infty$.
		Note that the condition $\int_{0+}m(x)^2dx < \infty$ implies the boundary $0$ for a $\frac{d}{dm}\frac{d^+}{dx}$-diffusion is exit or regular.
		For a string $m$ on $(0,\infty)$, we define
		\begin{align}
		m^{\ast}(x) = \inf \{ y > 0 \mid m(y) > x \} \ (x \in \bR). \label{}
		\end{align}
		Then $w(x) = m^{\ast}(x)$ is a string on $(-\infty,\infty)$. We call $m^{\ast}$ the dual string of $m$.
		
		We have the following fact:
		\begin{align}
			\text{For $m \in \cM_1$, its dual string $m^{\ast}$ is an element of $\cM_{\mathrm{circ}}$.}
		\end{align}
		In fact, from an elementary computation, it is easily checked that
		\begin{align}
		\int_{0+}m(x)^2dx < \infty \Leftrightarrow \int_{-\infty}x^2dm^{\ast}(x) < \infty. \label{}
		\end{align}
		For $m \in \cM_1$, we define
		\begin{align}
		H_m(\lambda) = h_{m^{\ast}}(\lambda) \ (\lambda  > 0). \label{}
		\end{align}
		An important consequence of the Krein-Kotani correspondence is the following theorem shown in Kasahara and Watanabe\cite{KasaharaWatanabe:Brownianrepresentation} which asserts a kind of continuity of the Krein-Kotani correspondence.
		\begin{Thm}{{\rm (Kasahara and Watanabe \cite[Theorem 2.9]{KasaharaWatanabe:Brownianrepresentation})}}\label{convequiv}\\
			Let $m_n, m \in \cM_1$ and $\sigma \geq 0$. Assume the following holds:
			\begin{enumerate}
				\item $\lim_{n \to \infty}m_n(x) = m(x)$ for every continuity point $x$ of $m$,
				\item $\lim_{x \to +0} \limsup_{n \to \infty} |\int_{0}^{x}m_n(y)^2 dy - \sigma^2| = 0$.
			\end{enumerate}
			Then we have for every $\lambda > 0$
			\begin{align}
			\lim_{n \to \infty}H_{{m_n}}(\lambda) = H_{m}(\lambda) - \sigma^2\lambda. \label{}
			\end{align}
		\end{Thm}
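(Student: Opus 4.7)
The plan is to pass to the dual strings via the Krein--Kotani correspondence and to isolate, within the integral representation of $h_{m_n^{\ast}}(\lambda)$, the contribution of a neighborhood of $-\infty$. Set $w_n := m_n^{\ast}$ and $w := m^{\ast}$, both in $\cM_{\mathrm{circ}}$, so that $H_{m_n}(\lambda)=h_{w_n}(\lambda)$ and $H_m(\lambda)=h_w(\lambda)$. Hypothesis~(i), combined with strict monotonicity of the $m_n$'s, yields pointwise convergence $w_n(x)\to w(x)$ at every continuity point of $w$. Using the change of variables $u=m_n(y)$ one has $\int_0^x m_n(y)^2\,dy = \int_{(-\infty,\,m_n(x)]} u^2\,dw_n(u)$, so hypothesis~(ii) rephrases as: for every $\eps>0$ there exists $A_0<0$ such that $\limsup_n \absol{\int_{(-\infty,\,A]} u^2\,dw_n(u)-\sigma^2} < \eps$ for all $A\leq A_0$; i.e.\ a ``deficit of second moment'' $\sigma^2$ escapes to $-\infty$ in the measures $dw_n$.

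For fixed $R>0$, choosing $b=-R$ in the ($b$-independent) representation of $h_{w_n}$ gives
\begin{align}
h_{w_n}(\lambda) = -R + \int_{-\infty}^{-R}\!\Big(\tfrac{1}{f_{w_n}(\lambda;x)^2}-1\Big)dx + \int_{-R}^{\infty}\!\tfrac{dx}{f_{w_n}(\lambda;x)^2}.\nonumber
\end{align}
Iterating the Krein ODE yields the integral form $f_w(\lambda;x)=1+\lambda\int_{-\infty}^{x}(x-z)f_w(\lambda;z)\,dw(z)$. Using Cauchy--Schwarz together with $\int_{(-\infty,-R]}dw_n \leq R^{-2}\int u^2\,dw_n$, one shows $f_{w_n}(\lambda;-R)\to 1$ and $f_{w_n}^+(\lambda;-R)\to 0$ as $n\to\infty$ for each fixed $R$. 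Combined with the pointwise convergence $w_n\to w$ on $[-R,\infty)$ and continuous dependence of the Krein ODE on the speed measure (a classical fact for the map $w\mapsto f_w$ restricted to a half-line with bounded total mass), this gives $\int_{-R}^{\infty}dx/f_{w_n}^2 \to \int_{-R}^{\infty}dx/f_w^2$.

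The decisive step is the tail. Substituting the leading-order expansion $1/f_{w_n}^2-1 \approx -2\lambda\int_{-\infty}^{x}(x-z)\,dw_n(z)$ and applying Fubini gives
\begin{align}
\int_{-\infty}^{-R}\!\Big(\tfrac{1}{f_{w_n}(\lambda;x)^2}-1\Big)dx \;=\; -\lambda\int_{(-\infty,-R]}(\absol{z}-R)^2\,dw_n(z) + o_{R,n}(1),\nonumber
\end{align}
and the principal term converges to $-\sigma^2\lambda$ in the iterated limit $n\to\infty$ then $R\to\infty$, because the mass of $dw_n$ on $(-\infty,-R]$ is asymptotically concentrated at $\absol{z}\gg R$, where $(\absol{z}-R)^2/z^2\to 1$. (A direct verification on the toy family $dw_n=c_n\delta_{a_n}$ with $c_n a_n^2\to\sigma^2$ and $a_n\to -\infty$ already gives exactly $-\sigma^2\lambda$.) The analogous tail integral for $f_w$ vanishes as $R\to\infty$ since $w\in\cM_{\mathrm{circ}}$, so assembling the three pieces yields $h_{w_n}(\lambda)\to h_w(\lambda)-\sigma^2\lambda$.

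The main obstacle is to bound the error $o_{R,n}(1)$ in the tail expansion \emph{uniformly in $n$}, i.e.\ purely in terms of $\int_{(-\infty,-R]} u^2\,dw_n(u)$ and of quantities tending to $0$ with $R$, without using any further information about how the mass of $dw_n$ is distributed on $(-\infty,-R]$. This requires an iterative bound on the Krein ODE near $-\infty$ relying solely on the $\cM_{\mathrm{circ}}$-condition on $w_n$, after which the double limit $n\to\infty$, $R\to\infty$ can be exchanged in the standard $\eps/\delta$ fashion to conclude.
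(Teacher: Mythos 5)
The paper does not prove this theorem: it is quoted verbatim from Kasahara and Watanabe~\cite[Theorem~2.9]{KasaharaWatanabe:Brownianrepresentation} and used as a black box, so there is no ``paper's own proof'' to compare against. I therefore assess your proposal on its own merits.

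Your overall strategy --- pass to the dual strings $w_n=m_n^{\ast}$, split the $b$-free representation of $h_{w_n}$ at a cutoff $-R$, and show that the extra second moment escaping to $-\infty$ produces precisely the drift $-\sigma^2\lambda$ in the tail integral --- is plausible, and your toy computation with $dw_n=c_n\delta_{a_n}$ ($c_n a_n^2=\sigma^2$, $a_n\to-\infty$) does indeed yield $h_{w_n}(\lambda)\to-\sigma^2\lambda$ while $h_w\equiv 0$, which is a good sanity check. Two things, however, prevent this from being a proof. First, a genuine error in the intermediate step: for fixed $R$, $f_{w_n}(\lambda;-R)$ does \emph{not} converge to $1$ (and $f_{w_n}^+(\lambda;-R)$ does not converge to $0$) as $n\to\infty$; the correct limits are $f_w(\lambda;-R)$ and $f_w^+(\lambda;-R)$, which tend to $1$ and $0$ only in the subsequent limit $R\to\infty$ because $w\in\cM_{\mathrm{circ}}$. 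In general $f_w(\lambda;-R)>1$ whenever $w$ has mass below $-R$, and if you feed the (wrong) initial data $(1,0)$ into the ODE on $[-R,\ell)$ you will not recover $f_w$ there, so your claimed convergence $\int_{-R}^{\ell}dx/f_{w_n}^2\to\int_{-R}^{\ell}dx/f_w^2$ would fail. The fix is to show $f_{w_n}(\lambda;-R)\to f_w(\lambda;-R)$ and $f_{w_n}^+(\lambda;-R)\to f_w^+(\lambda;-R)$ (which is true, and follows by splitting the integral representation at a further cutoff $-R_0$, using the $\cM_{\mathrm{circ}}$ bound $\int_{(-\infty,-R_0]}(|z|-R)\,dw_n\le R_0^{-1}\int_{(-\infty,-R_0]}z^2\,dw_n$), but this requires spelling out. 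Second, and decisive: the uniform-in-$n$ bound on the remainder in the expansion $1/f_{w_n}^2-1\approx -2\lambda\int_{-\infty}^x(x-z)\,dw_n(z)$ over the tail $(-\infty,-R]$ is exactly the technical content of the theorem, and you explicitly defer it (``The main obstacle is\ldots This requires an iterative bound\ldots''). Without that estimate the interchange of the limits $n\to\infty$ and $R\to\infty$ is unjustified. As it stands this is a promising outline with a correct guiding heuristic, not a proof.
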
 	

 \section{Construction of eigenfunctions of the generator}\label{section: constofphi}
 
 In this section, we introduce the function $\varphi^{1}_m$ which is the $\lambda$-eigenfunction of the generator $\frac{d}{dm}\frac{d^+}{dx}$ and satisfies the condition which we call modified Neumann boundary condition.
 
 We prepare some notation.
 \begin{Def}
 	For $m \in  \cM_1$, we define as follows:
 	\begin{align}
 		G_m(x) &= \int_{0}^{x}m(y)dy \ (x \geq 0) , \label{} \\
 		\tilde{m}(x) &= m(x) - m(1)  \ (x > 0) , \label{} \\
 		G^1_m(x) &= \int_{0}^{x}\tilde{m}(y)dy \ (x \geq 0). \label{}
 	\end{align}
 \end{Def}
 \begin{Def}
 		Let $U$ be a function of bounded variation on $(0,\infty)$ and $f$ be a function such that $\int_{0}^{x}|f||dU| < \infty$ for every $x > 0$. Here $|dU|$ is the total variation measure of the Stieltjes measure $dU$. 
 		We define
 		\begin{align}
 			U\bullet f (x) = \int_{0}^{x}f(y)dU(y). \label{}
 		\end{align}
 \end{Def}
 \begin{Def}
 	For $m \in \cM_1$, we define 
 	\begin{align}
 	s(x) &= x \   (x \geq 0), \label{} \\
 	\psi_m(\lambda; x) &= \sum_{k=0}^{\infty}\lambda^k ((s\bullet m \bullet)^k s)(x) \ (\lambda \in \bR,\   x \geq 0), \label{eq115} \\
 	g_m(\lambda;x) &= \psi_m(\lambda;x)\int_{x}^{\infty}\frac{dy}{\psi_m(\lambda;y)^2} \  (\lambda \in \bR,\  x \geq 0) \label{eq8}
 	\end{align}
 	where $(s\bullet m \bullet)^1 s = s \bullet m \bullet s$, $(s\bullet m \bullet)^2 s = s \bullet m \bullet s \bullet m \bullet s$, etc.
 \end{Def}
 The convergence in the RHS of \eqref{eq115} and \eqref{eq8} follows from the following proposition.
 \begin{Prop}\label{estofpsi}
 	For $m \in \cM_1$, the following hold for every $x \geq 0$ and $d \geq 0$:
 	\begin{align}
 	(s\bullet m \bullet)^d s(x) &\leq xE^d_m(0;x), \label{eq120} \\
 	m \bullet (s \bullet m \bullet )^{d-1} s(x) &\leq E^d_m(0;x), \label{eq121} \\
 	0 \leq \psi_m(\lambda;x) - \sum_{k=0}^{d-1}\lambda^k(s\bullet m \bullet)^k s(x) &\leq x |\lambda|^d E^d_m(|\lambda|;x), \label{eq122} \\
 	0 \leq \psi^+_{\lambda}(m;x) - 1 - \sum_{k=0}^{d-1}\lambda^{k+1} m \bullet (s \bullet m \bullet)^k s(x) &\leq |\lambda|^{d+1} E^{d}_m(|\lambda|;x) \label{eq123}
 	\end{align} 
 	where $(m \bullet s)^d(x) = \left( \int_{0}^{x}y dm(y) \right)^d$ and $E^d_m(\lambda;x) = (1/d!)(m \bullet s)^d (x) \mathrm{e}^{\lambda (m \bullet s)(x)}$.
 \end{Prop}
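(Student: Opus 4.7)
The plan is to prove \eqref{eq120} and \eqref{eq121} in tandem by induction on $d$, and then to derive \eqref{eq122} and \eqref{eq123} as immediate consequences via the standard exponential-series tail estimate $\sum_{k\ge d}a^k/k!\le (a^d/d!)\mathrm{e}^{a}$ for $a\ge 0$. Throughout I will set $f_d(x)=(s\bullet m\bullet)^d s(x)$ and $M(x)=(m\bullet s)(x)=\int_0^x y\,dm(y)$, so that \eqref{eq120} and \eqref{eq121} read $f_d(x)\le xM(x)^d/d!$ and $(m\bullet f_{d-1})(x)\le M(x)^d/d!$, respectively.

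To carry the induction cleanly through atoms of $m$, I would first strengthen \eqref{eq120} to the left-limit form
\begin{align*}
f_d(x)\le \frac{xM(x-)^d}{d!},
\end{align*}
which still implies \eqref{eq120} since $M(x-)\le M(x)$. The inductive loop is then: the strengthened \eqref{eq120} at index $d-1$ implies \eqref{eq121} at index $d$, which in turn implies the strengthened \eqref{eq120} at index $d$. For the first step, use $y\,dm(y)=dM(y)$ together with the elementary Stieltjes inequality $\int_0^x u(y-)^{d-1}\,du(y)\le u(x)^d/d$, valid for any non-decreasing right-continuous $u$ with $u(0)=0$ (proved at each atom via the pointwise bound $u(y)^d-u(y-)^d\ge d\,u(y-)^{d-1}(u(y)-u(y-))$):
\begin{align*}
(m\bullet f_{d-1})(x)\le \frac{1}{(d-1)!}\int_0^x M(y-)^{d-1}\,dM(y)\le \frac{M(x)^d}{d!}.
\end{align*}
For the second step, $f_d(x)=\int_0^x(m\bullet f_{d-1})(y)\,dy\le \int_0^x M(y)^d/d!\,dy\le xM(x-)^d/d!$, using $M(y)\le M(x-)$ for $y<x$. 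The base cases $f_0(x)=x$ and $(m\bullet s)(x)=M(x)$ are tautological.

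For \eqref{eq122} and \eqref{eq123}, the lower bounds $0\le\cdots$ reflect the non-negativity of each omitted term for $\lambda\ge 0$. The upper bounds follow by applying the exponential tail estimate with $a=|\lambda|M(x)$ to the remainder series $\sum_{k\ge d}\lambda^k f_k(x)$ in the case of \eqref{eq122}, and to $\sum_{k\ge d}\lambda^{k+1}(m\bullet f_k)(x)$ (after an index shift) in the case of \eqref{eq123}, combined with the bounds from the first step; absolute convergence of the defining series for $\psi_m$ and $\psi_m^+$ in \eqref{eq115} and the derivative series follows at the same stroke. The main technical obstacle is exactly the inductive step at atoms of $m$: a naive bound using $M(y)^{d-1}$ (the right-continuous value) produces the reversed Stieltjes inequality $\int_0^x M(y)^{d-1}\,dM(y)\ge M(x)^d/d$ at jumps, which destroys the factor $1/d!$; passage to the left limit $M(y-)$ is precisely what restores the correct direction.
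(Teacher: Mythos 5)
Your proof is correct and takes essentially the same inductive approach as the paper: prove \eqref{eq120} and \eqref{eq121} by an induction alternating between them, then obtain \eqref{eq122} and \eqref{eq123} from the exponential-tail estimate together with the first two bounds. You have in fact repaired a small gap in the paper's argument at atoms of $dm$: writing $M=(m\bullet s)$, the paper's step $\int_0^x E^k_m(0;y)\,y\,dm(y)=E^{k+1}_m(0;x)$ holds only with the reversed inequality $\ge$ when $M$ jumps, and your passage to the left limit, via $\int_0^y E^k_m(0;z)\,dz \le y\,E^k_m(0;y-)$ and then $\int_0^x M(y-)^k\,dM(y)\le M(x)^{k+1}/(k+1)$, is exactly what restores the required direction $\le$.
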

 \begin{proof}
 	First we show \eqref{eq121} by induction.
 	The case $d = 0$ is obvious.
 	Assume \eqref{eq121} holds for $k \geq 0$.
 	Then it holds that
 	\begin{align}
 		m \bullet (s \bullet m \bullet )^{k} s(x) 
 		&= m \bullet s \bullet (m \bullet (s \bullet m \bullet)^{k - 1}s)(x) \label{} \\
 		&\leq  m \bullet s \bullet E^k_m(0;x) \label{} \\
 		&\leq \int_{0}^{x}E^k_m(0;y)  ydm(y) \label{} \\
 		&= E^{k+1}_m(0;x), \label{}
 	\end{align}
 	which shows \eqref{eq121} for $d = k+1$. By induction we obtain \eqref{eq121}. From \eqref{eq121}, we have
 	\begin{align}
 		(s \bullet m \bullet)^d s(x) &= s \bullet m \bullet (s \bullet m \bullet)^{d-1}s(x) \label{} \\
 		&\leq \int_{0}^{x} E^d_m(0;y)dy \label{} \\
 		&\leq x E^d_m(0;x) \label{}
 	\end{align}
 	and, we obtain \eqref{eq120}.
 	Next we show \eqref{eq122}. Since it holds that
 	\begin{align}
 		\psi_m(\lambda;x) - \sum_{k=0}^{d-1}\lambda^k(s\bullet m \bullet)^k s(x)
 		=& \sum_{k=d}^{\infty}\lambda^k(s \bullet m \bullet)^k s(x) \label{} \\
 		=& \sum_{k=0}^{\infty}\lambda^{k+d}s\bullet (m\bullet s \bullet)^{k}(m \bullet(s \bullet m\bullet)^{d-1} s)(x) \label{} \\
 		\leq& |\lambda|^d m \bullet (s \bullet m \bullet)^{d-1}s(x) \sum_{k=0}^{\infty}|\lambda|^k(s \bullet m \bullet)^{k}s(x), \label{eq129}
 	\end{align}
	we have from \eqref{eq120} and \eqref{eq121},
 	\begin{align}
 		\eqref{eq129} 
 		\leq |\lambda|^d E^d_m(0;x) \sum_{k=0}^{\infty} \frac{x|\lambda|^k (m \bullet s)^k(x)}{k!} = x|\lambda|^d E^d_m(|\lambda|;x). \label{}
 	\end{align}
 	The proof of \eqref{eq123} is similar and so we omit it.
 \end{proof}
  \begin{Rem}\label{charofpsi}
 	\begin{enumerate}
 		\item The function $u = \psi_m(\lambda;\cdot)$ is the unique solution of the integral equation:
 		\begin{align}
 		u(x) = x + \lambda \int_{0}^{x}(x-y)u(y)dm(y) \ \text{for}\  \lambda \in \bR \ \text{and}\  x \in [0,\infty). \label{}
 		\end{align}
 		In other words, the function $u = \psi_{m}(\lambda; \cdot)$ is the unique solution of the ODE $\frac{d}{dm}\frac{d^+}{dx}u = \lambda u$ satisfying the boundary condition $u(0) = 0$ (Dirichlet) and $u^+(0) = 1$
 		\item The function $u = g_m(\lambda;\cdot)$ is the unique, non-negative and non-increasing solution of the equation $\frac{d}{dm}\frac{d^+}{dx}u = \lambda u$ satisfying the boundary condition $u(0+) = 1$ and $\lim_{x \to \infty}\frac{d^+}{dx}u(x) = 0$.
 		In fact, since we have
 		\begin{align}
 			g_m \psi^+_{m} - g^+_m\psi_m
 			&= g_m\psi^+_{m} - \psi^+_{m}\left(\int_{x}^{\infty}\frac{dy}{\psi_{m}(y)^2}\right)\psi_{m} + 1 = 1, \label{}
 		\end{align}
 		it follows that
 		\begin{align}
 			0 = d(g_m \psi^+_{m} - g^+_m\psi_m) 
 			= g_m d\psi^+_{m} - \psi_{m}dg^+_m 
 			= \lambda g_m \psi_{m}dm - \psi_{m}dg^+_m. \label{} 			
 		\end{align}
		.
 		Hence we obtain
 		\begin{align}
 			\frac{d}{dm}\frac{d^+}{dx}g_m = \lambda g_m \label{}
 		\end{align}
 		(this argument is due to It\^o\cite{Ito:Essentialsof}).
 	\end{enumerate}
 \end{Rem}
 Here we introduce the $\lambda$-eigenfunction announced in the beginning of this section.
 \begin{Def}
	For $m \in \cM_1$ and $x \geq 0$, we define
	\begin{align}
		\varphi^1_m(\lambda;x) &= 1 + \sum_{k=0}^{\infty}\lambda^{k+1}(s\bullet m \bullet)^k G^{1}_m(x). \label{eq9}
	\end{align}  
 \end{Def}
 The convergence of the summation in RHS of \eqref{eq9} follows from the following proposition.
   \begin{Prop}\label{estofphi^d}
 	Let $m \in \cM_1$. Then for any $d \geq 0$, the following hold for every $x \geq 0$:
 	\begin{align}
 	&\left|\varphi^1_m(\lambda;x) - 1 - \sum_{k=0}^{d}\lambda^{k+1} (s\bullet m \bullet)^{k}G^{1}_m(x)\right| \leq 
 	|\lambda|^{d+2} x \cdot S_m(x)\cdot E^d_m(|\lambda|;x), \label{eq124} \\
 	&\left|(\varphi^1_m)^+(\lambda;x) - \lambda \tilde{m}(x) - \sum_{k=1}^{d-1} \lambda^{k+1} m \bullet (s \bullet m\bullet)^{k-1}G^1_m(x) \right| \leq 
 	|\lambda|^{d+1}\cdot S_m(x) \cdot E^d_m(|\lambda|;x) \label{eq125}
 	\end{align} 
 	where $S_m(x) = \sup_{y \in [0,x]}|m \bullet G^1_m(y)|$.
 \end{Prop}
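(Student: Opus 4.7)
My plan is to mirror the inductive strategy used in the proof of Proposition \ref{estofpsi}, replacing the starting integrand $s(x)=x$ by $G^{1}_{m}(x)$ so that the initial $m$-integration produces the factor $S_m(x)=\sup_{y\in[0,x]}|m\bullet G^1_m(y)|$ rather than $(m\bullet s)(x)$. Concretely, the first step is to establish by induction on $k\geq 1$ the two auxiliary estimates
\begin{align}
m\bullet(s\bullet m\bullet)^{k-1}G^{1}_{m}(x) &\leq S_m(x)\cdot E^{k-1}_m(0;x), \\
(s\bullet m\bullet)^{k}G^{1}_{m}(x) &\leq x\cdot S_m(x)\cdot E^{k-1}_m(0;x).
\end{align}
The base case $k=1$ is immediate from the definition of $S_m$ for the first inequality, and follows by integrating against $ds(y)=dy$ for the second. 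The inductive step is carried out exactly as in the proof of \eqref{eq121}: one writes $(s\bullet m\bullet)^{k}G^{1}_{m}=s\bullet m\bullet(s\bullet m\bullet)^{k-1}G^{1}_{m}$, pulls the uniform bound $S_m(x)$ out of the integral, and invokes the key Stieltjes inequality $\int_0^x[(m\bullet s)(y)]^{j-1}\,y\,dm(y)\leq \frac{(m\bullet s)^{j}(x)}{j}$ already used to propagate $E^{d}_m(0;x)$ in Proposition \ref{estofpsi}.

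The second step is to feed these estimates into geometric/factorial tail computations. For \eqref{eq124}, the remainder of the series defining $\varphi^{1}_m$ beyond index $d$ is
\begin{align}
\sum_{k=d+1}^{\infty}\lambda^{k+1}(s\bullet m\bullet)^{k}G^{1}_{m}(x)
&\leq x\cdot S_m(x)\cdot|\lambda|^{2}\sum_{j=d}^{\infty}\frac{\bigl(|\lambda|(m\bullet s)(x)\bigr)^{j}}{j!},
\end{align}
and the standard factorial-tail bound $\sum_{j\geq d}a^{j}/j!\leq (a^{d}/d!)e^{a}$ produces exactly the right-hand side $|\lambda|^{d+2}x\,S_m(x)\,E^{d}_m(|\lambda|;x)$. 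In particular, the case $d=0$ proves absolute convergence of the series \eqref{eq9}, so $\varphi^{1}_m(\lambda;\cdot)$ is well defined.

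For \eqref{eq125} I will first justify that the right-derivative can be computed term-by-term: the uniform absolute convergence on $[0,x]$ of the partial sums together with the fact that the term $k=0$ has $\frac{d^+}{dx}G^{1}_m=\tilde{m}$ and, for $k\geq 1$, $\frac{d^+}{dx}(s\bullet m\bullet)^{k}G^{1}_{m}=m\bullet(s\bullet m\bullet)^{k-1}G^{1}_{m}$, yields
\begin{align}
(\varphi^{1}_{m})^{+}(\lambda;x)=\lambda\tilde{m}(x)+\sum_{k=1}^{\infty}\lambda^{k+1}m\bullet(s\bullet m\bullet)^{k-1}G^{1}_{m}(x).
\end{align}
The tail from $k=d$ onward is then bounded exactly as above, using the first auxiliary estimate in place of the second, producing the desired $S_m(x)$-factor with the appropriate power of $|\lambda|$ and $E$-factor.

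The main obstacle is really the bookkeeping of indices in the inductive step and in the tail sum, together with the rigorous justification that term-by-term differentiation is legitimate at each right-continuity point of $m$ (one should note that $(s\bullet m\bullet)^{k}G^{1}_{m}$ is of the form $s\bullet h_k$ with $h_k$ right-continuous of bounded variation, so $\frac{d^+}{dx}s\bullet h_k=h_k$ holds pointwise). Once these are set up as in the proof of Proposition \ref{estofpsi}, the estimates \eqref{eq124} and \eqref{eq125} fall out immediately.
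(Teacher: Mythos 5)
Your approach for \eqref{eq124} is correct and is essentially the paper's: you pull the innermost factor $|m\bullet G^1_m|\leq S_m(x)$ out of the iterated integral and reduce the tail to the $\psi$-type bounds \eqref{eq120}--\eqref{eq122}. Whether you re-derive the auxiliary inequalities by induction (as you do) or quote \eqref{eq120}--\eqref{eq122} directly (as the paper does) is merely a presentational difference.

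Two points deserve a closer look. First, a small notational slip: your auxiliary estimates
$m\bullet(s\bullet m\bullet)^{k-1}G^1_m(x)\leq S_m(x)E^{k-1}_m(0;x)$ and $(s\bullet m\bullet)^k G^1_m(x)\leq xS_m(x)E^{k-1}_m(0;x)$
should be stated with absolute values on the left, since $\tilde m\leq 0$ near $0$ makes $G^1_m$ and hence $m\bullet G^1_m$ negative there; what you actually use is $|m\bullet G^1_m(y)|\leq S_m(x)$ for $y\leq x$. Second, and more substantively, you assert without computation that the tail in \eqref{eq125} produces ``the appropriate power of $|\lambda|$ and $E$-factor.'' Carrying your own index bookkeeping through, for $d\geq 1$,
\begin{align}
\Bigl|\sum_{k\geq d}\lambda^{k+1}m\bullet(s\bullet m\bullet)^{k-1}G^1_m(x)\Bigr|
\leq S_m(x)\,|\lambda|^2\sum_{j\geq d-1}\frac{\bigl(|\lambda|(m\bullet s)(x)\bigr)^j}{j!}
\leq |\lambda|^{d+1}\,S_m(x)\,E^{d-1}_m(|\lambda|;x),
\end{align}
which is $E^{d-1}_m$, not the $E^d_m$ appearing in \eqref{eq125}. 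Because $E^{d}_m/E^{d-1}_m=(m\bullet s)(x)/d$ is not bounded below by $1$, the bound you obtain does \emph{not} imply the stated one. The shift arises because the displayed partial sum in \eqref{eq125} runs over $k=1,\dots,d-1$ (with the core $(s\bullet m\bullet)^{k-1}G^1_m$), one index lower than in \eqref{eq124}. A parallel discrepancy is present in the paper's \eqref{eq123}, so this may well be an index typo in the statement rather than in your argument; neither \eqref{eq123} nor \eqref{eq125} is invoked later in a way that is sensitive to this power. Nevertheless, you should not claim the estimate ``falls out immediately'' -- carry the computation through and either record the bound $|\lambda|^{d+1}S_m(x)E^{d-1}_m(|\lambda|;x)$ that you actually prove, or explain why the stated exponent is nonetheless attainable.
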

 \begin{proof}
	We only show \eqref{eq124}; the proof of \eqref{eq125} is similar.
 	From the definition of $\varphi^1_m$ and Proposition \ref{estofpsi}, for $x \geq 0$, we have
 	\begin{align}
 		\left|\varphi^1_m(\lambda;x) - 1 - \sum_{k=0}^{d}\lambda^{k+1} (s\bullet m \bullet)^{k}G^{1}_m(x)\right| 
 		=& \left| \sum_{k=d+1}^{\infty}\lambda^{k+1} (s \bullet m \bullet)^k G^1_m(x) \right| \label{} \\
 		\leq& S_m(x) |\lambda|^2 \sum_{k=d}^{\infty}\lambda^{k}(s \bullet m \bullet)^{k}s(x) \label{} \\
 		\leq& |\lambda|^{d+2} x\cdot S_m(x)\cdot E^d_m(|\lambda|;x). \label{} 
 	\end{align}
 \end{proof}

 The following theorem gives a characterization of the function $\varphi^1_m$ in terms of the integral and differential equations. 
 \begin{Thm} \label{rem1}
 	For $m \in \cM_1$, the following hold: 
	\begin{enumerate}
		\item The summation in \eqref{eq9} converges uniformly on every compact subset of $[0,\infty)$.
		\item \label{integralequation} The function $u = \varphi^1_m(\lambda;\cdot) - 1$ is the unique solution of the integral equation
		\begin{align}
		u(x) = \lambda G^{1}_m(x) + \lambda\int_{0}^{x}(x-y)u(y)dm(y). \label{eq36}
		\end{align}
		Equivalently, the function $u=\varphi^1_m(\lambda;\cdot)$ is the unique solution of the equation $\frac{d}{dm}\frac{d^+}{dx}u = \lambda u$ with the boundary condition:
		\begin{align}
		u(0) = 1, \lim_{x \to +0} \left(u^+(x) - \lambda \tilde{m}(x) \right) = 0. \label{}
		\end{align}
		We call this boundary condition the \emph{modified Neumann boundary condition}.
	\end{enumerate}
 \end{Thm}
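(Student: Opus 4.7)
The plan is to handle (i) by the tail estimates of Proposition \ref{estofphi^d}, then to derive the integral equation in (ii) at the level of the defining series, to verify the equivalence with the ODE plus modified Neumann condition by differentiating once in $x$ and once in $dm$, and finally to prove uniqueness by Volterra-type iteration.

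For (i), Proposition \ref{estofphi^d} bounds the tail of the series by $|\lambda|^{d+2} x \cdot S_m(x) \cdot E^d_m(|\lambda|;x)$, which tends to zero as $d \to \infty$ uniformly on each compact subset of $[0,\infty)$, provided one first checks that $S_m(x) < \infty$ on compacts. The latter reduces to the finiteness of $m \bullet G^1_m(y)$; integration by parts gives $m\bullet G^1_m(y) = G^1_m(y)m(y) - \int_0^y m(z)\tilde m(z)\,dz$, and the boundary term $G^1_m(\eps)m(\eps)$ vanishes as $\eps \to 0+$ thanks to the hypothesis $\int_{0+}m(y)^2\,dy < \infty$ together with the monotonicity bound $m(\eps)^2 \eps \leq \int_0^\eps m^2\,dy$.

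For the integral equation in (ii), I will work at the level of the defining series. By Fubini, $s\bullet m\bullet f(x) = \int_0^x (x-z)f(z)\,dm(z)$. Applying $\lambda s\bullet m\bullet$ termwise to $u := \varphi^1_m(\lambda;\cdot) - 1$ and reindexing the sum gives $\lambda\, s\bullet m\bullet u = u - \lambda G^1_m$, which is exactly \eqref{eq36}. Differentiating once in $x$ produces $u^+(x) = \lambda \tilde m(x) + \lambda (m\bullet u)(x)$, and taking the Stieltjes differential yields $du^+ = \lambda(1+u)\,dm$, i.e.\ $\frac{d}{dm}\frac{d^+}{dx}\varphi^1_m = \lambda\varphi^1_m$. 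The boundary conditions $\varphi^1_m(0)=1$ and the modified Neumann condition fall out immediately, since $(m\bullet u)(x) \to 0$ as $x \to 0+$ by the same estimate as in (i). Conversely, given any solution $\varphi = 1 + v$ of the ODE satisfying the modified Neumann condition, integrating $dv^+ = \lambda(1+v)\,dm$ over $(\eps, x]$ and passing to the limit $\eps \to 0+$ (with the boundary terms absorbed by the modified Neumann condition) produces $v^+ = \lambda\tilde m + \lambda (m\bullet v)$, and a further integration in $x$ recovers \eqref{eq36}.

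Uniqueness follows from Volterra iteration: the difference $w$ of two solutions of \eqref{eq36} satisfies $w = \lambda\, s\bullet m\bullet w$, hence $w = \lambda^k (s\bullet m\bullet)^k w$ for every $k \geq 1$. Using continuity of $w$ with $w(0)=0$ to bound $|w(z)| \leq Cz$ on $[0,x]$ (where $C$ is the sup of $|w^+|$ on the same interval) and applying the estimate $(s\bullet m\bullet)^k s(x) \leq x E^k_m(0;x)$ from Proposition \ref{estofpsi} gives $|w(x)| \leq Cx|\lambda|^k E^k_m(0;x) \to 0$ as $k\to\infty$, so $w \equiv 0$. The main obstacle I anticipate is the boundary bookkeeping: in the exit case $m(0+) = -\infty$ is allowed, so the integration by parts of $\int_0^x G^1_m\,dm$, the interchange of limit and integral in $\int_\eps^x v\,dm$ as $\eps \to 0+$, and the transition between the integral form and the ODE form all require careful use of $\int_{0+}m^2 < \infty$ together with monotonicity of $m$, rather than mere boundedness of $m$ near $0$.
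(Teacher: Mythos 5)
Your argument is correct and matches the paper's approach: part (i) from the tail estimate in Proposition \ref{estofphi^d}, and part (ii) by termwise verification of the integral equation for the defining series followed by Volterra iteration for uniqueness. The paper is terser --- its uniqueness bound pulls out the factor $\int_0^x |w|\,dm$ where you pull out $C=\sup_{[0,x]}|w^+|$ (interchangeable, since $w^+=\lambda\, m\bullet w$ and $(m\bullet|w|)(\cdot)$ is non-decreasing, and both implicitly use that $\int_0^x|w|\,dm<\infty$ is part of what it means to solve \eqref{eq36}) --- while you supply details the paper leaves unstated, notably the finiteness of $S_m$ on compacts via $\eps\, m(\eps)^2\le\int_0^\eps m^2$ and the translation between \eqref{eq36} and the modified Neumann ODE.
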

 \begin{proof}
	The assertion (i) is obvious from Proposition \ref{estofphi^d} and for the assertion (ii), it is easily checked that $u = \varphi^1_m(\lambda;\cdot) - 1$ is the solution of the integral equation \eqref{eq36}. We prove the uniqueness. Let $u$ and $v$ be the solution of the integral equation \eqref{eq36}. Define $w(x) = u(x)- v(x)$. Then it holds that
	\begin{align}
	w(x) = \lambda\int_{0}^{x}dy\int_{0}^{y}w(z)dm(z). \ (x \in [0,\infty) ) \label{}
	\end{align}
	Hence we have $w(x) \leq x\frac{|\lambda|^k (m \bullet s)^k(x)}{k!} \int_{0}^{x}w(y)dm(y)$ for every $x \geq 0$ and $k\geq 1$. Then it follows that $w(x)=0 \ (x \geq 0 )$.
\end{proof}
We introduce a subset of $\cM_1$.
\begin{Def}
Define
\begin{align}
\cM_0 &= \{ m \in \cM_1 \mid \lim_{x \to +0}m(x) > -\infty  \}. \label{}
\end{align}
\end{Def}
A string $m \in \cM_0$ with $m(0+) \geq 0$ is called a Krein's string and extensively used in the studies of one-dimensional diffusions (see e.g.\cite{Kasahara:Spectraltheory}, \cite{Kotani:Krein'sstrings}). In these studies, the unique solution $u = \varphi_m$ of the differential equation $\frac{d}{dm}\frac{d^+}{dx}u = \lambda u \ (\lambda \geq 0)$ with $u(0) = 1$ and $u^+(0) = \lambda m(0+)$ plays an important role. The following proposition gives a relation between $\varphi_m$ and $\varphi^1_m$.
 \begin{Prop} \label{repforM_0}
 	If $m \in \cM_0$, then the function $u = \varphi^1_m(\lambda;\cdot) + \lambda m(1)\psi_m(\lambda;\cdot)$ is the unique solution of the integral equation: 
 	\begin{align}
 	u(x) = 1 + \lambda \int_{0-}^{x}(x-y)u(y)dm(y), \label{eq22}
 	\end{align}
 	where we regard $dm\{ 0 \} = m(0+)$.
 	If, in addition, $m(0+) \geq 0$, then the function $u(x) = \varphi^1_m(\lambda;\cdot) + \lambda m(1)\psi_m(\lambda;\cdot)$ is the unique non-decreasing solution of the differential equation:
 	\begin{align}
 	\frac{d}{dm}\frac{d^+}{dx}u = \lambda u,\ u(0) = 1,\ u^+(0) = \lambda m(0+). \label{}
 	\end{align}
 \end{Prop}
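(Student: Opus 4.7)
The plan is to verify \eqref{eq22} directly from the integral equations already obtained for $\varphi^1_m$ and $\psi_m$, and then deduce the ODE characterization, the boundary data and the non-decreasing property.

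First, I would rewrite the integral equation for $\varphi^1_m$ in a form comparable to \eqref{eq22}. Since $m\in\cM_0$ gives $m(0+)\in\bR$, integration by parts yields
\[
\int_0^x (x-y)\,dm(y) \;=\; -x\,m(0+) + \int_0^x m(y)\,dy,
\]
and combining this with the identity $G^1_m(x) = \int_0^x m(y)\,dy - x\,m(1)$ and the integral equation \eqref{eq36} for $\varphi^1_m-1$ from Theorem \ref{rem1}(ii) rearranges to
\[
\varphi^1_m(\lambda;x) \;=\; 1 + \lambda x\bigl(m(0+)-m(1)\bigr) + \lambda\int_0^x (x-y)\,\varphi^1_m(\lambda;y)\,dm(y).
\]
Adding $\lambda m(1)$ times the integral equation for $\psi_m$ from Remark \ref{charofpsi}(i) and collecting terms shows that $u = \varphi^1_m(\lambda;\cdot) + \lambda m(1)\psi_m(\lambda;\cdot)$ satisfies
\[
u(x) \;=\; 1 + \lambda x\,m(0+) + \lambda\int_0^x (x-y)\,u(y)\,dm(y).
\]
Since $u(0)=1$, the linear term $\lambda x\,m(0+)$ is precisely $\lambda(x-0)\,u(0)\cdot dm\{0\}$, so this identity is exactly \eqref{eq22} under the convention $dm\{0\}=m(0+)$.

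For uniqueness I would argue as in the proof of Theorem \ref{rem1}: if $u_1,u_2$ both satisfy \eqref{eq22}, evaluating at $x=0$ forces $u_i(0)=1$, and the difference $w=u_1-u_2$ satisfies $w(x)=\lambda\int_0^x (x-y)\,w(y)\,dm(y)$ with $w(0)=0$; iteration on this equation yields $w\equiv 0$.

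For the second assertion, assume $m(0+)\geq 0$. I would differentiate \eqref{eq22} with respect to $x$ (using right-continuity of $m$) to obtain $u^+(x)=\lambda\int_{0-}^x u(y)\,dm(y)$, so $u^+(0)=\lambda\,u(0)\,m(0+)=\lambda m(0+)$ and $du^+ = \lambda u\,dm$ on $(0,\infty)$, which is exactly $\frac{d}{dm}\frac{d^+}{dx}u=\lambda u$ with the prescribed boundary data. The non-decreasing property then follows by a bootstrap: on the (open and closed) set where $u\geq 0$ the identity $du^+=\lambda u\,dm\geq 0$ forces $u^+$ to be non-decreasing, hence $u^+\geq u^+(0)=\lambda m(0+)\geq 0$, hence $u\geq u(0)=1$, so this set is all of $[0,\infty)$. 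Uniqueness among non-decreasing solutions then reduces to uniqueness in the integral formulation above.

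The main bookkeeping obstacle will be the careful handling of the atom $dm\{0\}=m(0+)$ together with the interplay between the shifted quantity $G^1_m$ and the original string $m$; once the integration-by-parts identity above is in hand the rest is a direct recombination of Theorem \ref{rem1} and Remark \ref{charofpsi}.
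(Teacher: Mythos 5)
Your verification of \eqref{eq22} is correct and is the same computation the paper performs, just organized in the opposite direction: you first eliminate $G^1_m$ in favour of $m(0+)$ via the integration-by-parts identity $\int_0^x (x-y)\,dm(y) = \int_0^x m(y)\,dy - x\,m(0+)$ and then add the $\psi_m$ equation, whereas the paper substitutes the two integral equations directly into $\lambda\int_0^x dy\int_{0-}^y u\,dm$ and splits off the atom $\lambda m(0+)x$; both lead to the same cancellation. You also spell out the uniqueness and the second (ODE/non-decreasing) assertion, which the paper leaves implicit — that extra material is sound in substance, though the phrase ``open and closed set where $u\geq 0$'' should really be the standard minimal-crossing argument (let $x_0$ be the first zero of $u$; on $[0,x_0)$ one has $u>0$, hence $u^+$ non-decreasing, hence $u^+\geq\lambda m(0+)\geq 0$, hence $u$ non-decreasing, hence $u(x_0)\geq 1$, contradiction), since $\{u\geq 0\}$ is closed but not a priori open.
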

\begin{proof}
	Setting $v = \varphi^1_m - 1$, we have from \eqref{eq36},
	\begin{align}
		&\lambda \int_{0}^{x}dy \int_{0-}^{y}(\varphi^1_m + \lambda m(1)\psi_{m})dm(z) \label{} \\
		=& \lambda\int_{0}^{x}dy \int_{0}^{y}(v + 1 + \lambda m(1) \psi_{m})dm(z) + \lambda m(0+)x \label{} \\
		=& v - \lambda G^1_m + \lambda \int_{0}^{x}(m(y) - m(0+))dy + \lambda m(1)(\psi_{m} - x) + \lambda m(0+)x \label{} \\
		=& v + \lambda m(1)\psi_{m} \label{}
	\end{align}
 \end{proof}
 Since $\varphi^1_m(\lambda;\cdot)$ and $\psi_m(\lambda;\cdot)$ are linearly independent solutions of the equation $\frac{d}{dm}\frac{d^+}{dx}u = \lambda u$, the function $g_m(\lambda;\cdot)$ can be represented as 
 \begin{align}
 g_m(\lambda;\cdot) = \varphi^1_m(\lambda;\cdot) - c^1_m(\lambda)\psi_m(\lambda;\cdot) \label{defofg}
 \end{align} 
 by a constant $c^1_m(\lambda)$. 

 \section{Representation of spectral characteristic functions}\label{section: represenofg_lambda}
 
 In this section, we show for $m \in \cM_1$ it holds that $c^1_m(\lambda) = \lambda H_{m}(\lambda) - \lambda m(1) \ (\lambda > 0)$. This result is well-known in the case the boundary $0$ is regular. Therefore this is an extension of the result to a class of exit boundaries. The essential tool is the Krein-Kotani correspondence. 
 
 We note the following well-known result on one-dimensional diffusion theory (see e.g.\ \cite{KotaniWatanabe:Krein'sspectraltheory} for the proof).
 \begin{Prop}\label{Kreinfact}
 	Let $m \in \cM_1$ with $m(0+) \geq 0$ and $u = \varphi_m(\lambda;\cdot)$ be the unique solution of the following differential equation:
 	\begin{align}
 		\frac{d}{dm}\frac{d^+}{dx}u = \lambda u,\ u(0) = 1, \ u^+(0) = \lambda m(0+). \label{eq93} 
 	\end{align}
 	Then the following holds:
 	\begin{align}
 		g_m(\lambda;\cdot) = \varphi_m(\lambda;\cdot) - \lambda H_{m}(\lambda)\psi_m(\lambda;\cdot) \ (\lambda > 0). \label{}
 	\end{align}
 \end{Prop}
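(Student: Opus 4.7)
The plan is to prove the identity in two stages: first express $g_m(\lambda;\cdot)$ as a linear combination of $\varphi_m(\lambda;\cdot)$ and $\psi_m(\lambda;\cdot)$, then identify the coefficient with $\lambda H_m(\lambda)$ via the Krein-Kotani correspondence.

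For the first stage, I would observe that $\varphi_m(\lambda;\cdot)$ and $\psi_m(\lambda;\cdot)$ form a fundamental system of solutions of $\frac{d}{dm}\frac{d^+}{dx}u=\lambda u$. Indeed, since the Wronskian
\begin{align}
W[\varphi_m,\psi_m](x)=\varphi_m(\lambda;x)\psi_m^+(\lambda;x)-\varphi_m^+(\lambda;x)\psi_m(\lambda;x)
\end{align}
is conserved in $x$, its value at $0$ is $1\cdot 1-\lambda m(0+)\cdot 0=1$. Since $g_m(\lambda;\cdot)$ is another solution with $g_m(\lambda;0+)=1$, matching the value at $0$ uniquely determines the decomposition
\begin{align}
g_m(\lambda;\cdot)=\varphi_m(\lambda;\cdot)-c(\lambda)\psi_m(\lambda;\cdot)
\end{align}
with $c(\lambda)$ fixed by the condition $\lim_{x\to\infty}g_m^+(\lambda;x)=0$ from Remark \ref{charofpsi}.

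For the second stage, differentiating the product formula \eqref{eq8} yields
\begin{align}
g_m^+(\lambda;x)=\psi_m^+(\lambda;x)\int_x^\infty\frac{dy}{\psi_m(\lambda;y)^2}-\frac{1}{\psi_m(\lambda;x)},
\end{align}
so taking $x\to 0+$ and comparing with $g_m^+(\lambda;0)=\lambda m(0+)-c(\lambda)$ gives an explicit expression for $c(\lambda)$ as a limit involving only $\psi_m$. The next step is to rewrite this expression in the spectral form defining $h_{m^*}(\lambda)$. The key tool is the classical duality between the strings $m$ and $m^*$: the function $v(y):=\psi_m^+(\lambda;m^*(y))$ satisfies the dual ODE $\frac{d}{dm^*}\frac{d^+}{dy}v=\lambda v$ with $v(m(0+)-)=1$ and $v^+(m(0+)-)=0$, which (after extending by a constant if necessary) identifies it with $f_{m^*}(\lambda;\cdot)$. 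Performing the change of variables $y=m^*(z)$ in the integral $\int_x^\infty\psi_m(\lambda;y)^{-2}dy$ and using this duality, together with the analogous identification of $\psi_m(\lambda;x)$ as a primitive of $f_{m^*}$ with respect to $dm^*$, converts the right-hand side into the expression
\begin{align}
b+\int_{-\infty}^b\!\left(\frac{1}{f_{m^*}(\lambda;z)^2}-1\right)dz+\int_b^{\ell}\!\frac{dz}{f_{m^*}(\lambda;z)^2}=h_{m^*}(\lambda)=H_m(\lambda),
\end{align}
multiplied by $\lambda$. This yields $c(\lambda)=\lambda H_m(\lambda)$, completing the proof.

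The main obstacle I anticipate is the boundary bookkeeping in the change of variables of Step 3: near $x=0$ the integrand $1/\psi_m(\lambda;y)^2$ has a non-integrable singularity of order $1/y^2$, matched on the dual side by the renormalizing subtraction $1/f_{m^*}^2-1$ in $h_{m^*}$. The cancellation of these singular parts relies essentially on the regular boundary hypothesis $m(0+)\geq 0$ (which forces $\psi_m(\lambda;y)\sim y$ and $f_{m^*}(\lambda;z)\to 1$ at the respective boundaries) and on the additional linear term $\lambda m(0+)$ coming from $\varphi_m^+(0)$, which matches the constant $b$ in the definition of $h_{m^*}$. Once these boundary asymptotics are aligned, the identification of $c(\lambda)$ with $\lambda H_m(\lambda)$ is an algebraic rearrangement.
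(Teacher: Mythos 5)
The paper does not supply a proof of this proposition: it is stated as a known fact and the reader is referred to Kotani--Watanabe for it. Your proof sketch is therefore a genuine fill-in, and its overall architecture is the right one: Wronskian normalization gives the decomposition $g_m = \varphi_m - c(\lambda)\psi_m$; evaluating $g_m^+$ at $0$ isolates $c(\lambda)$; and the Krein duality $f_{m^*}(\lambda;z)=\psi_m^+(\lambda;m^*(z))$, $f_{m^*}^+(\lambda;z)=\lambda\psi_m(\lambda;m^*(z))$ (which you state correctly) is the bridge to $h_{m^*}$. Your observation that the constant $b$ in the definition of $h_{m^*}$ should be taken equal to $m(0+)$, where $f_{m^*}\equiv1$ on $(-\infty,m(0+))$ makes the renormalizing integral vanish and aligns with the term $\lambda m(0+)=\varphi_m^+(0)$, is also correct.

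Where the argument has a genuine gap is in the claim that the ``change of variables $y=m^*(z)$'' together with duality ``converts'' $\int_x^\infty \psi_m(\lambda;y)^{-2}\,dy$ into the defining expression for $h_{m^*}$. A bare change of variables $z=m(y)$ turns this integral into $\int_{m(x)}^{m(\infty)}\lambda^2\,dm^*(z)/f_{m^*}^+(\lambda;z)^2$, which has both the wrong measure ($dm^*$ rather than $dz$) and the wrong function ($f_{m^*}^+$ rather than $f_{m^*}$); no amount of boundary bookkeeping fixes that mismatch. The missing algebraic step is an integration by parts (equivalently, the identity $-d\bigl(1/(\psi_m\psi_m^+)\bigr)=dy/\psi_m^2+\lambda\,dm(y)/(\psi_m^+)^2$), which gives
\begin{align}
\int_x^\infty\frac{dy}{\psi_m(\lambda;y)^2}=\frac{1}{\psi_m(\lambda;x)\psi_m^+(\lambda;x)}-\lambda\int_x^\infty\frac{dm(y)}{\psi_m^+(\lambda;y)^2},
\end{align}
so that
\begin{align}
g_m^+(\lambda;x)=\psi_m^+(\lambda;x)\int_x^\infty\frac{dy}{\psi_m(\lambda;y)^2}-\frac{1}{\psi_m(\lambda;x)}=-\lambda\,\psi_m^+(\lambda;x)\int_x^\infty\frac{dm(y)}{\psi_m^+(\lambda;y)^2}.
\end{align}
The $1/\psi_m$ singularity cancels here by this identity, not by comparing boundary asymptotics with the renormalizing subtraction in $h_{m^*}$. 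Only after this reduction does the duality change of variables $z=m(y)$ apply cleanly, giving $\int_0^\infty dm(y)/\psi_m^+(\lambda;y)^2=\int_{m(0+)}^{m(\infty)}dz/f_{m^*}(\lambda;z)^2$ and hence $c(\lambda)=\lambda m(0+)-g_m^+(\lambda;0)=\lambda h_{m^*}(\lambda)$. In short: the plan is sound and the duality statement is right, but the Lagrange/Wronskian identity that makes the ``algebraic rearrangement'' actually work is left out, and the diagnosis of what cancels what in Step 3 is incorrect as stated.
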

 The following proposition is obvious from Proposition \ref{repforM_0} and Proposition \ref{Kreinfact}.
 \begin{Prop} \label{Krein}
 	Let $m \in \cM_0$ and $\lambda > 0$. It holds that
 	\begin{align}
 		c^1_m(\lambda) = \lambda H_{m}(\lambda) - \lambda m(1). \label{}
 	\end{align}
 \end{Prop}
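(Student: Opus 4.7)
The plan is to combine Proposition \ref{repforM_0} with the classical Krein identity of Proposition \ref{Kreinfact}, handling the case $m(0+) \geq 0$ first and then reducing the general case $m \in \cM_0$ to it by a shift argument. Since the paper calls the result "obvious from Proposition \ref{repforM_0} and Proposition \ref{Kreinfact}", this is essentially a one-line substitution together with a checking of translation invariance.

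Assume first that $m(0+) \geq 0$. Proposition \ref{repforM_0} identifies the Krein eigenfunction $\varphi_m$ as
\begin{align}
\varphi_m(\lambda;\cdot) = \varphi^1_m(\lambda;\cdot) + \lambda m(1)\, \psi_m(\lambda;\cdot),
\end{align}
while Proposition \ref{Kreinfact} yields $g_m(\lambda;\cdot) = \varphi_m(\lambda;\cdot) - \lambda H_m(\lambda)\, \psi_m(\lambda;\cdot)$. Substituting one into the other,
\begin{align}
g_m(\lambda;\cdot) = \varphi^1_m(\lambda;\cdot) - \bigl(\lambda H_m(\lambda) - \lambda m(1)\bigr)\psi_m(\lambda;\cdot).
\end{align}
Comparing with the representation \eqref{defofg} and using the linear independence of $\varphi^1_m(\lambda;\cdot)$ and $\psi_m(\lambda;\cdot)$, we read off $c^1_m(\lambda) = \lambda H_m(\lambda) - \lambda m(1)$.

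For a general $m \in \cM_0$ with possibly $m(0+) < 0$, I would reduce to the previous case by a shift. Fix $c \geq -m(0+)$ and set $m_c(x) := m(x) + c$, so that $m_c \in \cM_0$ with $m_c(0+) \geq 0$. The key observations are: (a) $dm_c = dm$, so $\psi_{m_c} = \psi_m$ and $g_{m_c} = g_m$; (b) $\tilde m_c(x) = m_c(x) - m_c(1) = m(x) - m(1) = \tilde m(x)$, so $G^1_{m_c} = G^1_m$ and hence $\varphi^1_{m_c} = \varphi^1_m$, whence $c^1_{m_c} = c^1_m$; (c) $m_c(1) = m(1) + c$; (d) the dual string transforms as $m_c^{\ast}(x) = m^{\ast}(x-c)$, and a change of variable in the integral defining $h_{m^{\ast}}$ shows $H_{m_c}(\lambda) = H_m(\lambda) + c$. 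Applying the previously established identity to $m_c$ and collecting terms, the shift $c$ cancels:
\begin{align}
c^1_m(\lambda) = c^1_{m_c}(\lambda) = \lambda H_{m_c}(\lambda) - \lambda m_c(1) = \lambda H_m(\lambda) - \lambda m(1).
\end{align}

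The only non-cosmetic step is the verification of item (d), i.e.\ $H_{m+c} = H_m + c$, which follows directly from the definition of $h_w$: the eigenfunction $f_{m_c^{\ast}}(\lambda;x)$ equals $f_{m^{\ast}}(\lambda;x-c)$, and substituting $y = x-c$ in the two integrals in the definition of $h_{m_c^{\ast}}$ (with the reference point shifted from $b$ to $b+c$) produces exactly an extra summand $c$. Nothing else in the argument is delicate, and I do not anticipate any serious obstacle.
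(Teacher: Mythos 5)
Your proof is correct and takes essentially the same route the paper intends: substitute the identity $\varphi_m(\lambda;\cdot) = \varphi^1_m(\lambda;\cdot) + \lambda m(1)\psi_m(\lambda;\cdot)$ of Proposition~\ref{repforM_0} into the Krein identity of Proposition~\ref{Kreinfact} and read off the coefficient of $\psi_m$. The shift argument you add to cover $m(0+)<0$, resting on $H_{m+c}=H_m+c$, is a welcome and in fact necessary supplement: the paper's one-word ``obvious'' proof only clearly treats $m(0+)\geq 0$, yet Proposition~\ref{Krein} is later invoked in the proof of Theorem~\ref{charfunc} for strings $m_n$ with $m_n(0+)=a_n\to-\infty$.
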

 We generalize Proposition \ref{Krein} by approximation argument.
 \begin{Thm} \label{charfunc}
 	Let $m \in \cM_1$ and $\lambda > 0$. It holds that
 	\begin{align}
 		c^1_m(\lambda) = \lambda H_{m}(\lambda) -\lambda m(1). \label{}
 	\end{align}
 \end{Thm}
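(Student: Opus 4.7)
The strategy is to extend the identity of Proposition~\ref{Krein} from $\cM_0$ to $\cM_1$ by approximation. For $m \in \cM_1$ I would set $m_n := m \vee (-n)$, which lies in $\cM_0$ because $m_n(0+) \geq -n > -\infty$. The properties that do the work are: $m_n \to m$ at every continuity point of $m$, the pointwise bound $|m_n(y)| \leq |m(y)|$, the measure inequality $dm_n \leq dm$ (since $m_n$ agrees with $m$ on $\{m > -n\}$ and is constant on $\{m \leq -n\}$), and $m_n(1) = m(1)$ for all sufficiently large $n$.

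Theorem~\ref{convequiv} then applies with $\sigma = 0$: condition (i) is pointwise convergence, while condition (ii) follows from $|m_n| \leq |m|$ together with $\int_{0+} m(y)^2 dy < \infty$, since dominated convergence gives $\limsup_n \int_0^x m_n(y)^2 dy \leq \int_0^x m(y)^2 dy \to 0$ as $x \to +0$. This yields $H_{m_n}(\lambda) \to H_m(\lambda)$. Next I would prove the pointwise convergences $\psi_{m_n}(\lambda;x) \to \psi_m(\lambda;x)$ and $\varphi^1_{m_n}(\lambda;x) \to \varphi^1_m(\lambda;x)$ from the series expansions \eqref{eq115} and \eqref{eq9}. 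Each iterated integral such as $\int_0^x (x-y)y\, dm_n(y) = -\int_0^x m_n(y)(x-2y)\,dy$ reduces, after integration by parts, to a Lebesgue integral to which dominated convergence applies, and the higher-order terms are handled analogously. The tails of the two series are bounded uniformly in $n$ by Propositions~\ref{estofpsi} and~\ref{estofphi^d} applied to $m$, since $dm_n \leq dm$ implies $E^d_{m_n}(\lambda;x) \leq E^d_m(\lambda;x)$ and $S_{m_n}(x) \leq S_m(x)$.

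The main obstacle is the convergence of $g_{m_n}$: the natural approach via \eqref{eq8} fails because $\psi_{m_n} \leq \psi_m$ yields $1/\psi_{m_n}^2 \geq 1/\psi_m^2$, the wrong direction for dominated convergence. I would sidestep this by extracting $c^1_m(\lambda)$ from the large-$x$ behaviour of $\varphi^1_m/\psi_m$. From the integral equation for $\psi_m$ and positivity of $\lambda$ and $dm$, one immediately gets $\psi_m(\lambda;y) \geq y$, hence
\begin{align*}
0 \leq \frac{g_m(\lambda;x)}{\psi_m(\lambda;x)} = \int_x^\infty \frac{dy}{\psi_m(\lambda;y)^2} \leq \frac{1}{x}.
\end{align*}
Combined with \eqref{defofg} this yields the uniform estimate
\begin{align*}
\left| c^1_m(\lambda) - \frac{\varphi^1_m(\lambda;x)}{\psi_m(\lambda;x)} \right| \leq \frac{1}{x}, \qquad m \in \cM_1,\ x > 0.
\end{align*}

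To conclude, given $\epsilon > 0$, I fix $x > 3/\epsilon$. Applying the last estimate to both $m$ and to each $m_n$, together with the pointwise convergence $\varphi^1_{m_n}(\lambda;x)/\psi_{m_n}(\lambda;x) \to \varphi^1_m(\lambda;x)/\psi_m(\lambda;x)$ (valid since $\psi_m(\lambda;x) \geq x > 0$), the triangle inequality yields $|c^1_{m_n}(\lambda) - c^1_m(\lambda)| < \epsilon$ for all large $n$. Proposition~\ref{Krein} gives $c^1_{m_n}(\lambda) = \lambda H_{m_n}(\lambda) - \lambda m(1)$, and passing to the limit using $H_{m_n}(\lambda) \to H_m(\lambda)$ yields the claim $c^1_m(\lambda) = \lambda H_m(\lambda) - \lambda m(1)$.
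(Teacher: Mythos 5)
Your proof is correct and follows the same overall scaffold as the paper's: approximate $m$ from below by $m_n = m \vee (-n) \in \cM_0$, invoke Proposition~\ref{Krein} to get $c^1_{m_n}(\lambda) = \lambda H_{m_n}(\lambda) - \lambda m(1)$ (for large $n$), and pass to the limit using Theorem~\ref{convequiv} with $\sigma = 0$. Where you genuinely diverge is the final step. The paper proves $g_{m_n}(\lambda;x) \to g_m(\lambda;x)$ directly, by dominated convergence applied to $\int_x^\infty dy/\psi_{m_n}(\lambda;y)^2$, and then reads off the coefficient from the decomposition~\eqref{defofg}. You instead sidestep $g_{m_n}$ entirely by noting $g_m/\psi_m = \int_x^\infty dy/\psi_m(y)^2 \in [0,1/x]$, hence the uniform estimate $|c^1_m(\lambda) - \varphi^1_m(\lambda;x)/\psi_m(\lambda;x)| \leq 1/x$, valid for every string; then $n \to \infty$ at fixed large $x$ does the job. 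That is a clean and slightly more economical route, and the uniform-in-$m$ error bound is a nice byproduct. One correction of a side remark, however: you say the paper's approach ``fails'' because $1/\psi_{m_n}^2 \geq 1/\psi_m^2$ is the wrong direction for domination, but that is not the obstruction it appears to be --- the correct dominating function is $1/y^2$ (from $\psi_{m_n}(\lambda;y) \geq y$ for $\lambda > 0$), integrable on $[x,\infty)$ for $x>0$, which is exactly how the paper applies DCT via \eqref{eq7}. So the paper's route works; yours is simply a valid alternative. You also replace the paper's Ascoli--Arzel\`a-plus-uniqueness argument for $\psi_{m_n} \to \psi_m$, $\varphi^1_{m_n} \to \varphi^1_m$ by termwise convergence of the series \eqref{eq115} and \eqref{eq9} with uniform tail bounds from Propositions~\ref{estofpsi} and \ref{estofphi^d}; both work, since $dm_n \leq dm$ and $|m_n| \leq |m|$ give the needed monotone comparisons.
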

 \begin{proof}
 	This proof is essentially due to that of Theorem 1 of \cite{Kasahara:Spectraltheory}.
 	We take a decreasing sequence $\{a_n \}_n$ which diverges to $-\infty$. Fix $\lambda > 0$. Define strings $\{m_n \}_n$ as $m_n(x) = \max\{m(x) , a_n\}$. We denote
 	\begin{align}
 		\varphi^1_{n}(x) = \varphi^1_{m_n}(x),\ \varphi^1(x) = \varphi^1_{m}(x),\ etc. \label{}
 	\end{align}
	Since $m_n(0+) > -\infty$, it holds that from Proposition \ref{Krein}
 	\begin{align}
 		c^1_n = \lambda H_n -\lambda m_n(1). \label{}
 	\end{align}
 	In order to prove 
 	\begin{align}
 		g_m = (\varphi^1_n + \lambda m(1)\psi_n ) - \lambda H_{m^{\ast}}\psi_n, \label{}
 	\end{align}
 	it is enough to show that as $n \to \infty$,
 	\begin{align}
 		H_n \to H, \quad \varphi^1_n(x) \to \varphi^1(x),\quad 
 		\psi_n(x) \to \psi(x), \quad
 		g_n(x) \to g(x) \label{}
 	\end{align}
 	 hold for some $x > 0$. The first one directly follows from Theorem \ref{convequiv}. At first, we prove $\lim_{n \to \infty}\psi_n(x) = \psi(x)$ for every $x > 0$. Since it holds that $(m_n \bullet s)(x) \leq (m \bullet s)(x)$, from Proposition \ref{estofpsi}, we have
 	\begin{align}
 		x \leq \psi_n(x) \leq x \mathrm{e}^{\lambda (m \bullet s)(x)}, \ 
 		1 \leq \psi^+_n(x) \leq \mathrm{e}^{\lambda (m \bullet s)(x)}. \label{eq7}
 	\end{align}
 	Then from Ascoli-Arzela theorem and the diagonal argument, we can take a subsequence $\{n_k \}_k$ such that $\{ \psi_{n_k} \}_k$ converges to some $\tilde{\psi}$ uniformly on every compact subset of $[0,\infty)$. Then from Remark \ref{charofpsi}, the following holds: 
 	\begin{align}
 		\psi_{n_k}(x) = x + \lambda \int_{0}^{x}(x-y)\psi_{n_k}(y)dm_n(y). \label{}
 	\end{align}
 	Then by $n \to \infty$, we obtain
 	\begin{align}
 		\tilde{\psi}(x) = x + \lambda \int_{0}^{x}(x-y)\tilde{\psi}(y)dm(y). \label{}
 	\end{align}
	Hence by the uniqueness of the solution of the integral equation, we obtain $\tilde{\psi} = \psi$. This argument also holds if we start from any subsequence of $\{\psi_n \}_n$. Hence we have
	\begin{align}
		\lim_{n \to \infty}\psi_n(x) = \psi(x) \label{}
	\end{align}
	 for every $x \in [0,\infty)$ and $\lambda > 0$. Since we have \eqref{eq8} and \eqref{eq7}, by the dominated convergence theorem, we obtain 
	 \begin{align}
	 	\lim_{n \to \infty}g_n(x) = g(x) \label{}
	 \end{align}
	 for every $x > 0$ and $\lambda > 0$. Finally, we prove that $\lim_{n \to \infty}\varphi^1_n(x) = \varphi^1(x)$. From the definition of $\varphi^1_n$, we can easily check the following holds for $x \in (0,\infty)$: 
 	\begin{align}
 		|\varphi^1_n(x)| & \leq 1 + \lambda \int_{0}^{x}|m(y)|dy + \lambda^2 x \mathrm{e}^{\lambda (m \bullet s)(x)}\cdot\sup_{y \in [0,x]}|m\bullet G^1_m(y)| , \label{}\\
 		|(\varphi^1_n)^+(x)| & \leq \lambda|m(x)| + \lambda^2 \mathrm{e}^{\lambda (m \bullet s)(x)}\cdot\sup_{y \in [0,x]}|m\bullet G^1_m(y)|. \label{}
 	\end{align}
 	Then we can take a subsequence $\{n_k \}_k$ such that $\varphi^1_{n_k}$ converges to some function $\tilde{\varphi}$ uniformly on every compact subset of $(0,\infty)$. Since the function $u = \tilde{\varphi} - 1$ is a solution of the integral equation 
 	\begin{align}
 		u(x) = \lambda G^1_m(x) + \lambda \int_{0}^{x}(x-y)u(y)dm(y). \label{}
 	\end{align}
 	From the uniqueness of the solution, we obtain 
 	\begin{align}
 		\tilde{\varphi}(x) = \varphi^1(x). \label{}
 	\end{align}
 	Hence it holds that $\lim_{n \to \infty}\varphi^1_n(x) = \varphi^1(x)$.
 \end{proof}
 The following proposition shows that the second order at $0$ of $g_m$ is $x$.
 \begin{Prop} \label{diffcirc}
 	Let $m \in \cM_1$. Then the following holds: 
 	\begin{align}
 		\lim_{x \to 0}\frac{1-g_m (\lambda;x) +\lambda G_m(x)}{x} = \lambda H_{m}(\lambda) \  \text{for every} \ \lambda > 0 . \label{}
 	\end{align}
 \end{Prop}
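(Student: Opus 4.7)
The plan is to use the representation $g_m(\lambda;\cdot) = \varphi^1_m(\lambda;\cdot) - c^1_m(\lambda) \psi_m(\lambda;\cdot)$ from \eqref{defofg} together with the identification $c^1_m(\lambda) = \lambda H_m(\lambda) - \lambda m(1)$ from Theorem \ref{charfunc}. Writing
\begin{align}
\frac{1 - g_m(\lambda;x) + \lambda G_m(x)}{x}
= \frac{1 - \varphi^1_m(\lambda;x) + \lambda G_m(x)}{x} + c^1_m(\lambda) \frac{\psi_m(\lambda;x)}{x}, \nonumber
\end{align}
the proof reduces to computing the two limits on the right as $x \to 0$. Since $G_m(x) - G^1_m(x) = m(1) x$, one expects the first quotient to tend to $\lambda m(1)$ and the second to $c^1_m(\lambda)$, summing to $\lambda H_m(\lambda)$.

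For the second term I would apply Proposition \ref{estofpsi} with $d=1$, which gives $|\psi_m(\lambda;x) - x| \le x\lambda E^1_m(\lambda;x)$; since $E^1_m(\lambda;x)$ carries a factor $(m\bullet s)(x) \to 0$, this yields $\psi_m(\lambda;x)/x \to 1$. For the first term I would apply Proposition \ref{estofphi^d} with $d=0$, which gives
\begin{align}
\bigl|\varphi^1_m(\lambda;x) - 1 - \lambda G^1_m(x)\bigr| \le \lambda^2 x\, S_m(x)\, E^0_m(\lambda;x), \nonumber
\end{align}
so that $1 - \varphi^1_m(\lambda;x) + \lambda G_m(x) = \lambda m(1) x + R(x)$ with $|R(x)| \le \lambda^2 x S_m(x) E^0_m(\lambda;x)$. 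Dividing by $x$ and noting $E^0_m(\lambda;x) = e^{\lambda(m\bullet s)(x)} \to 1$, the conclusion follows provided $S_m(x) \to 0$.

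The main obstacle — and the only non-bookkeeping step — is verifying $S_m(x) = \sup_{y \in [0,x]}|m\bullet G^1_m(y)| \to 0$. Here I would rely on the defining hypothesis $\int_{0+}m(y)^2\,dy < \infty$ of $\cM_1$: by monotonicity, this forces $\sqrt{y}|m(y)| \to 0$ as $y \to 0$, and then Cauchy--Schwarz gives $|G_m(y)| \le \sqrt{y}\,(\int_0^y m^2)^{1/2} \to 0$ and likewise $(m\bullet s)(x) \to 0$. Integrating by parts,
\begin{align}
\int_0^y G^1_m(z)\,dm(z) = G^1_m(y)m(y) - \lim_{z\to 0^+} G^1_m(z)m(z) - \int_0^y m(z)^2\,dz + m(1) G_m(y), \nonumber
\end{align}
and each term vanishes as $y \to 0$: the boundary terms because $|G^1_m(z)m(z)| \le |m(z)|\bigl(|G_m(z)| + |m(1)|z\bigr)$ and $|m(z)|\cdot|G_m(z)| \le |m(z)|\sqrt{z}(\int_0^z m^2)^{1/2} \to 0$; the middle two integrals because $\int_0^y m^2\,dz \to 0$ and $G_m(y) \to 0$. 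Hence $S_m(x) \to 0$, which finishes the proof.
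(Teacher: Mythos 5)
Your proof is correct and rests on exactly the same ingredients as the paper's: the decomposition $g_m = \varphi^1_m - c^1_m\psi_m$ from \eqref{defofg}, the identification $c^1_m(\lambda) = \lambda H_m(\lambda) - \lambda m(1)$ from Theorem \ref{charfunc}, and the first-order behaviour $\psi_m(\lambda;x)/x \to 1$. The paper instead substitutes the integral equation of Theorem \ref{rem1}\eqref{integralequation} into $\varphi^1_m$ and arrives at
\begin{align}
1 - g_m + \lambda G_m = c^1_m(\lambda)\psi_m + \lambda m(1)x - \lambda\int_0^x dy\int_0^y(\varphi^1_m - 1)\,dm(z), \nonumber
\end{align}
then dismisses the last term with only the remark that $\psi_m^+(0+)=1$; that last term is $o(x)$ precisely because $\int_0^y(\varphi^1_m-1)\,dm \to 0$, which after one application of Proposition \ref{estofphi^d} reduces to $m\bullet G^1_m(y)\to 0$, i.e.\ to $S_m(x)\to 0$. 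Your version takes the equivalent route through the series estimates of Propositions \ref{estofpsi} and \ref{estofphi^d}, and — unlike the paper — actually verifies $S_m(x)\to 0$ from $\int_{0+}m^2\,dy<\infty$ via monotonicity (giving $\sqrt{y}\,|m(y)|\to 0$), Cauchy--Schwarz, and integration by parts. So the approach is the same, but you have filled in the one step the paper leaves implicit; this is a genuine improvement in completeness rather than a different method.
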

 \begin{proof}
 	From Theorem \ref{charfunc} and Theorem \ref{rem1} \eqref{integralequation}, it holds that 
 	\begin{align}
 		&1-g_m +\lambda G_m \label{} \\
 		=& \lambda (H_{m} - \lambda m(1))\psi_m + \lambda m(1)x -\int_{0}^{x}dy \int_{0}^{y}(\varphi^1_m -1)dm(y). \label{}
 	\end{align}
 	 Since $\psi^+_m(0+) = 1$ holds, we obtain the desired result.
 \end{proof}

\section{Convergence of inverse local times}\label{section: contthmofinverselocaltime}

 We consider a strong Markov process on $[0,\infty)$ which has continuous paths on $(0,\infty)$. Under Feller's boundary condition, such processes are shown in It\^o and Mckean\cite{ItoMcKean} and It\^o\cite{Ito:PPP} to have the generator $\cL$ of the following form: 
 \begin{equation}
 	\begin{cases}
 	r\cL u(0) = \int_{0}^{\infty}(u(y) - u(0))j(dy) + cu^+(0) \label{}, \\
 	\cL u(x) = \frac{d}{dm}\frac{d^+}{dx}u(x) \ \text{for} \ x \in (0,\infty).  \label{}
 \end{cases} 	
 \end{equation} 
 
  Here $m$ and $j$ are Radon measures on $(0,\infty)$ and constants $c, r$ are non-negative. Hence such processes are characterized by the quadruplet $(m,j,c,r)$. A necessary and sufficient condition for the existence of such process for given $(m,j,c,r)$ is shown in Feller\cite{Feller:Theparabolic} and It\^o\cite{Ito:PPP}.
  The conditions are the following:
  \begin{equation}
  	\text{(C)'}
  	\left\{
  		\begin{aligned}
  			&\text{(i) For some}\  x_0 > 0\ \text{the following holds:}   \\
  			&\quad j(x_0, \infty) + \int_{0}^{x_0}xj(dx) + \int_{0}^{x_0}|G_m(x)|j(dx) < \infty,  \\
  			&(ii) \ c = 0 \ \text{if}\ m(0+) = -\infty,  \\
  			&(iii) \ r > 0 \ \text{if}\  c = 0 \ \text{and}\ j(0,x_0) < \infty.
  		\end{aligned}
  		\label{existenceoftheprocess}
  	\right.
  \end{equation}
 For the quadruplet $(m,j,c,r)$ which satisfies (C)', we denote the corresponding process as $X_{m,j,c,r}$.
 Our main concern here is in the case: $m(0+) = -\infty$, that is, when the origin is the non-entrance boundary in the sense of Feller. Hence we assume $c = 0$. In addition we assume $r = 0$ for simplicity. Eventually our process $X_{m,j,c,r}$ depends only on $m$ and $j$. Then we denote $X_{m,j,c,r}$ simply as $X_{m,j}$. In this case, the condition (C)' is reduced to the following condition:
 \begin{equation}
 	\text{(C)}
 	\left\{
 	\begin{aligned}
 		&\text{For some}\  x_0 > 0\ \text{the following holds:} \\
 		&(i)\  j(x_0, \infty) + \int_{0}^{x_0}xj(dx) + \int_{0}^{x_0}|G_m(x)|j(dx) < \infty,  \\
 		&(ii)\  j(0,x_0) = \infty.
 	\end{aligned}
 	\label{existenceoftheprocess2}
 	\right.
 \end{equation}
 We briefly summarize the construction of the process $X_{m,j}$.
 Let $m$ and $j$ be Radon measures on $(0,\infty)$. Assume $(m,j)$ satisfies the condition (C).
 Define $T_0(e) = \inf\{ s > 0 \mid e(s) = 0 \} \ (e \in \bD)$ and $\bE$ as the set of all elements $e$ in $\bD$ which satisfy that $e(u) = 0$ for every $u \geq T_0(e)$ if $T_0(e) < \infty$.
 Let $P^m_x \ (x \in (0,\infty))$ be the law of the diffusion process with speed measure $dm$ starting from $x$ and killed at $0$.
 Then the following measure $n_{m,j}$ is the excursion measure of the process $X_{m,j}$:
 \begin{align}
 	n_{m,j}(A) = \int_{0}^{\infty}P^{m}_x (A)j(dx) \ (A \in \cB(\bE)). \label{} 
 \end{align} 
 We construct the sample paths of $X_{m,j}$.
 We define $N(ds de)$ as a Poisson random measure on $(0,\infty)\times \bE$ having intensity measure $dx \otimes n_{m,j}$ being defined on some probability space. Here $dx$ is the Lebesgue measure. We define $D(p) = \{ s \in (0,\infty) \mid N(\{s\}\times \bE ) = 1 \}$ and the map $p:D(p) \to \bE$ 
 such that $p[s] \ (s \in D(p))$ is the only one element of the support of the measure $N(\{s\} \times de)$.
 We define the process $\eta_{m,j}$ as follows:
 \begin{align}
 	\eta_{m,j}(u) = \int_{(0,u] \times \bE}T_0(e) N(ds de). \label{}
 \end{align}
 Then we construct $X_{m,j}$ as follows: 
 \begin{equation}
 	X_{m,j}(t) = \left\{
 	\begin{aligned}
 		&p[u](t - \eta_{m,j}(u-)) & &\text{if} \ u \in D(p) \ \text{and} \ \eta_{m,j}(u-) \leq t < \eta_{m,j}(u),  \\
 		&0 & &\text{otherwise}.
 	\end{aligned}
 	\right. \label{}
 \end{equation}
 Then $\eta_{m,j}$ plays the role of the inverse local time at $0$ of $X_{m,j}$.
 This process $X_{m,j}$ thus constructed is the jumping-in diffusion associated to $m$ and $j$ and started from $0$. We denote the law of $X_{m,j}$ by $P$.
 We have for $\lambda > 0$,
 \begin{align}
 	\chi_{m,j}(\lambda) &= -\log P[\mathrm{e}^{-\lambda \eta_{m,j}(1)}] \label{} \\
 	&= \int_{0}^{\infty}(1 - \mathrm{e}^{-\lambda u}) n_{m,j}(T_0 \in du)  \label{} \\
 	&= \int_{0}^{\infty}P^m_x[ 1 - \mathrm{e}^{-\lambda T_0}]j(dx) \label{}
 \end{align} 
 It is well-known that the following holds (see e.g.\cite{Ito:Essentialsof}):
 \begin{align}
 	g_m(\lambda;x) = P^m_x[\mathrm{e}^{-\lambda T_0}]. \label{}
 \end{align}
 Hence we obtain the following expression:
 \begin{align}
 	\chi_{m,j}(\lambda) &= \int_{0}^{\infty}(1 - g_m(\lambda;x))j(dx)\ (\lambda > 0 ). \label{eq94}
 \end{align}
 Here we establish two different continuity theorems for $\eta_{m,j}$. 
 \begin{Thm}\label{convtojump} 
 	Let $m_n,m \in \cM_1$ and $j_n$ be a Radon measure on $(0,\infty)$ and assume $(m_n,j_n)$ satisfies $\mathrm{(C)}$. Suppose the following hold: 
 	\begin{enumerate}
 		\item $\lim_{n \to \infty} m_n(x) = m(x)$ for every continuity point $x$ of $m$,
 		\item $\lim_{x \to +0}\limsup_{n \to \infty}\int_{0}^{x}m_n(y)^2dy = 0$,
 		\item $j_n(dx) \xrightarrow[n \to \infty]{w} 0$ on $[1,\infty]$,
 		\item $xj_n(dx) \xrightarrow[n \to \infty]{w} \kappa \delta_0(dx)$ on $[0,1]$ for a constant $\kappa > 0$.
 	\end{enumerate} 
 	Then if we take $b_n = -\int_{0}^{1}G_{m_n}(x)j_n(dx)$, we have
 	\begin{align}
 		\eta_{m_n,j_n}(t) - b_n t \xrightarrow[n \to \infty]{d} T(m;\kappa t) \ \text{on} \ \bD. \label{}
 	\end{align}
 	Here $T(m;t)$ is the L\'evy process without negative jumps whose Laplace exponent is $\lambda H_m(\lambda)$.
 \end{Thm}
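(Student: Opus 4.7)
The plan is to prove pointwise convergence of the Laplace exponents,
\begin{align}
\chi_{m_n,j_n}(\lambda) - b_n\lambda \xrightarrow[n \to \infty]{} \kappa\lambda H_m(\lambda) \quad \text{for every }\lambda > 0, \n
\end{align}
and then invoke the continuity theorem for spectrally positive L\'evy processes from Appendix~\ref{appendix: contthmofLT} to upgrade this to convergence in law on $\bD$ in the $J_1$-topology. The prospective limit $T(m;\kappa t)$ is itself a spectrally positive L\'evy process with exponent $\kappa\lambda H_m(\lambda)$, so the application is routine once the exponents converge.

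Starting from \eqref{eq94} and the decomposition $g_{m_n}(\lambda;x) = \varphi^1_{m_n}(\lambda;x) - c^1_{m_n}(\lambda)\,\psi_{m_n}(\lambda;x)$ with $c^1_{m_n}(\lambda) = \lambda H_{m_n}(\lambda) - \lambda m_n(1)$ (Theorem~\ref{charfunc}), I use Propositions~\ref{estofpsi} and~\ref{estofphi^d} to expand $\varphi^1_{m_n}(\lambda;x) = 1 + \lambda G^1_{m_n}(x) + O(x\,S_{m_n}(x))$ and $\psi_{m_n}(\lambda;x) = x + O(x\,(m_n\bullet s)(x))$. Since $G^1_{m_n}(x) = G_{m_n}(x) - m_n(1)\,x$, the $\lambda m_n(1)x$ contributions cancel and I obtain
\begin{align}
1 - g_{m_n}(\lambda;x) = \lambda H_{m_n}(\lambda)\,x - \lambda G_{m_n}(x) + R_n(\lambda;x), \n
\end{align}
where $|R_n(\lambda;x)| \le C(\lambda)\,x\,\epsilon_n(x)$ and $\epsilon_n(x)$ is controlled by $S_{m_n}(x)$ and $(m_n\bullet s)(x)$. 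A Cauchy--Schwarz argument together with the monotonicity of $m_n$ bounds both quantities by a multiple of $x^{1/2}\bigl(\int_0^x m_n(y)^2\,dy\bigr)^{1/2}$, so hypothesis~(ii) guarantees $\limsup_n \epsilon_n(x) \to 0$ as $x \to 0$.

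I then split $\int_0^\infty = \int_0^1 + \int_1^\infty$. The tail $\int_1^\infty (1 - g_{m_n}(\lambda;x))\,j_n(dx)$ vanishes, since $0 \le 1 - g_{m_n} \le 1$ and $j_n \to 0$ weakly on $[1,\infty]$ by hypothesis~(iii). Inserting the expansion into the near-origin part and using $b_n = -\int_0^1 G_{m_n}(x)\,j_n(dx)$ produces
\begin{align}
\int_0^1 (1 - g_{m_n}(\lambda;x))\,j_n(dx) - b_n\lambda = \lambda H_{m_n}(\lambda) \int_0^1 x\,j_n(dx) + \int_0^1 R_n(\lambda;x)\,j_n(dx). \n
\end{align}
Theorem~\ref{convequiv}, applied with $\sigma = 0$ courtesy of~(ii), yields $H_{m_n}(\lambda) \to H_m(\lambda)$; combined with $\int_0^1 x\,j_n(dx) \to \kappa$ from~(iv), the principal term converges to $\kappa\lambda H_m(\lambda)$.

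The main obstacle is showing that the remainder $\int_0^1 R_n(\lambda;x)\,j_n(dx)$ vanishes. I split it at an auxiliary $\epsilon \in (0,1)$. On $[0,\epsilon]$, the bound $|R_n| \le C(\lambda)\,x\,\epsilon_n(x)$ gives
\begin{align}
\int_0^\epsilon |R_n(\lambda;x)|\,j_n(dx) \le C(\lambda)\,\sup_{y \le \epsilon}\epsilon_n(y) \int_0^\epsilon x\,j_n(dx), \n
\end{align}
so letting $n \to \infty$ first and then $\epsilon \to 0$ makes this contribution vanish by the uniform smallness of $\epsilon_n$ and the boundedness of $\int_0^1 xj_n(dx) \to \kappa$. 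On $[\epsilon,1]$, the weak convergence $xj_n \to \kappa\delta_0$ forces $\int_\epsilon^1 j_n(dx) \le \epsilon^{-1}\int_\epsilon^1 xj_n(dx) \to 0$, while $R_n$ is uniformly bounded on $[\epsilon,1]$ by pointwise convergence of its constituent functions (as in the approximation argument of Theorem~\ref{charfunc}). The genuine technical heart is thus turning the pointwise estimates of Propositions~\ref{estofpsi} and~\ref{estofphi^d} into bounds on $\epsilon_n$ uniform in $n$ near the origin, which is precisely where hypothesis~(ii) is indispensable.
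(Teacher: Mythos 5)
Your proposal follows essentially the same route as the paper: reduce to Laplace exponent convergence via Proposition~\ref{continuityofLT}, decompose $g_{m_n} = \varphi^1_{m_n} - c^1_{m_n}\psi_{m_n}$ with $c^1_{m_n}$ identified via Theorem~\ref{charfunc}, expand $\varphi^1_{m_n}$ and $\psi_{m_n}$ to leading order so that $1 - g_{m_n} + \lambda G_{m_n} = \lambda H_{m_n}x + (\text{remainder})$, use (iii) to kill the tail and (iv) with Theorem~\ref{convequiv} for the principal term, and control the remainder on $[0,1]$ by the Cauchy--Schwarz bound on $(m_n\bullet s)$ and $S_{m_n}$ together with hypothesis~(ii). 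The only cosmetic difference is that the paper estimates the $\Phi^1_{m_n}$ and $\Psi_{m_n}$ contributions separately and uses $\sup_n c^1_{m_n} < \infty$ explicitly, whereas you bundle them into a single remainder $R_n$; the underlying estimates are the same.
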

 \begin{proof}
	From \eqref{eq94} and Proposition \ref{continuityofLT}, it is enough to show the following for every $\lambda > 0$:
 	\begin{align}
 		\lim_{n \to \infty}\left( \int_{0}^{\infty}(1-g_{m_n}(\lambda;x))j_n(dx) + \lambda \int_{0}^{1}G_{m_n}(x)j_n(dx)\right) = \kappa\lambda H_m(\lambda). \label{eq116}
 	\end{align}
 	Fix $\lambda > 0$.
 	Since the function $1 - g_{m_n}$ is bounded continuous,	we see from the assumption (iii) that \eqref{eq116} is equivalent to the following (with $(\lambda)$ being omitted):
 	\begin{align}
 	\lim_{n \to \infty} \int_{0}^{1}(1-g_{m_n} + \lambda G_{m_n})dj_n = \kappa\lambda H_m. \label{eq128}
 	\end{align}
 	From \eqref{defofg}, we have
 	\begin{align}
 		g_{m_n} = \varphi^1_{m_n} - c^1_{m_n}\psi_{m_n} 
 		= (1 + \lambda G^1_{m_n} + \Phi^1_{m_n}) - c^1_{m_n}(x + \Psi_{m_n}) \label{}
 	\end{align}
 	where $\Phi^1_{m_n} = \varphi^1_{m_n} - 1 - \lambda G^1_{m_n}$, $\Psi_{m_n} = \psi_{m_n} - x$.
 	Note that from Schwarz's inequality, we have for $x \in (0,1]$,
 	\begin{align}
 	(m_n \bullet s)(x) &\leq x \tilde{m}_n(x) - \int_{0}^{x}\tilde{m}_n(y)dy
 	\leq \int_{0}^{x}(-\tilde{m}_n(y))dy \leq \sqrt{x}\left( \int_{0}^{x}\tilde{m}_n(y)^2dy \right)^{1/2}, \label{eq140}
 	\end{align}
 	and hence for a fixed constant $\delta \in (0,1)$, from Proposition \ref{estofphi^d}, we have for a constant $C > 0$,
 	\begin{align}
 		\int_{0}^{1}|\Phi^1_{m_n}|dj_n &\leq \lambda^2 \int_{0}^{\delta}xS_{m_n}(x)\mathrm{e}^{\lambda (m_n \bullet s)} j_n(dx) 
 		+ \lambda^2\int_{\delta}^{1}xS_{m_n}(x)\mathrm{e}^{\lambda (m_n \bullet s)} j_n(dx)\label{} \\
 		&\leq C\lambda^2 \left( \int_{0}^{\delta}\tilde{m}_n(y)^2dy \right)\int_{0}^{\delta}x j_n(dx) + C\lambda^2 \int_{\delta}^{1}x j_n(dx). \label{}
 	\end{align}
 	Then from the assumptions (ii), (iii) and (iv), it follows that
 	\begin{align}
 		\limsup_{n \to \infty}\int_{0}^{1}|\Phi^1_{m_n}|dj_n &\leq \kappa C\lambda^2 \limsup_{n \to \infty}\left( \int_{0}^{\delta}\tilde{m}_n(y)^2dy \right)
 		\xrightarrow[\delta \to +0]{} 0 \ \text{by (ii)}. \label{eq126}
 	\end{align}
 	Similarly, by Proposition \ref{estofpsi}, we can show
 	\begin{align}
 		\lim_{n \to \infty}\int_{0}^{1}\Psi_{m_n}dj_n = 0. \label{eq127}
 	\end{align}
 	Since we have
 	\begin{align}
 		1 - g_{m_n} + \lambda G_{m_n} &= 1 - ((1 + \lambda G^1_{m_n} + \Phi^1_{m_n}) - c^1_{m_n}(x + \Psi_{m_n})) + \lambda G_{m_n} \label{} \\
 		&= -\Phi^1_{m_n} + \lambda H_{m_n} x + c^1_{m_n}\Psi_{m_n}. \label{}
 	\end{align}
 	Note that from Theorems \ref{convequiv} and \ref{charfunc}, it holds that
 	\begin{align}
 		\lim_{n \to \infty}H_{m_n} = H_{m}, \ \sup_n c^1_{m_n} < \infty. \label{}
 	\end{align}
 	Hence from \eqref{eq126} and \eqref{eq127}, we obtain \eqref{eq128}.
 \end{proof}

 The following is the second continuity theorem, which we need to show the case the scaling limit of inverse local times converges to a Brownian motion, which is treated in Section \ref{section: alpha2}.
 \begin{Thm}\label{convtoBM}
 	Let $m_n, m \in \cM_1$, $j_n$ be a Radon measure on $(0,\infty)$ and $\sigma_1$ be a non-negative constant and assume $(m_n,j_n)$ satisfies $\mathrm{(C)}$. Suppose the following hold: 
 	\begin{enumerate}
 		\item $\lim_{n \to \infty} m_n(x) = m(x)$ for every  continuity point $x$ of $m$,
 		\item $\lim_{x \to +0}\limsup_{n \to \infty} |\int_{0}^{x}m_n(y)^2dy - \sigma_1^2| = 0$ for a constant $\sigma > 0$,
 		\item $j_n(dx) \xrightarrow[n \to \infty]{w} 0$ on $[1,\infty]$,
 		\item $xj_n(dx) \xrightarrow[n \to \infty]{w} \kappa_1\delta_0(dx)$ on $[0,1]$ for a constant $\kappa_1 > 0$,
 		\item $(-s\bullet m_n \bullet G_{m_n}(x))j_n(dx) \xrightarrow[n \to \infty]{w} \kappa_2 \sigma_1^2\delta_0(dx)$ on $[0,1]$ for a constant $\kappa_2 \in \bR$.
 	\end{enumerate} 
 	Then if we take $b_n = -\int_{0}^{1}G_{m_n}(x)j_n(dx)$, we have $\kappa_1 \geq \kappa_2$ and
 	\begin{align}
 	\eta_{m_n,j_n}(t) - b_n t \xrightarrow[n \to \infty]{d} \sigma B(t) + T(m;\kappa t) \ \text{on} \ \bD, \label{}
 	\end{align}
 	where 
 	\begin{align}
 		\sigma^2 = (\kappa_1 - \kappa_2)\sigma_1^2, \ \kappa = \kappa_1, \label{} 
 	\end{align}
 	and $B$ is a standard Brownian motion independent of $T(m;\cdot)$.
 \end{Thm}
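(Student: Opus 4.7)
The plan is to adapt the Laplace-exponent framework used in the proof of Theorem \ref{convtojump} and apply Proposition \ref{continuityofLT}. Setting $\tilde{\chi}_n(\lambda) := \chi_{m_n,j_n}(\lambda) - \lambda b_n$, the Laplace exponent of $\sigma B(t) + T(m;\kappa t)$ (in the paper's sign convention $\bE[e^{-\lambda L(t)}] = e^{-t\chi(\lambda)}$) equals $\kappa\lambda H_m(\lambda) - \tfrac{\sigma^2}{2}\lambda^2$. Once one verifies that $\tilde{\chi}_n(\lambda)$ converges to this expression for every $\lambda > 0$, the inequality $\kappa_1 \geq \kappa_2$ emerges as a necessary consequence, since a limit of Laplace exponents of L\'evy processes must have a non-positive quadratic coefficient, forcing $(\kappa_1 - \kappa_2)\sigma_1^2 \geq 0$. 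Independence of $B$ and $T(m;\cdot)$ is automatic from the additive structure of the limit exponent.

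I would reuse the decomposition
\begin{align}
1 - g_{m_n}(\lambda;x) + \lambda G_{m_n}(x) = -\Phi^1_{m_n}(\lambda;x) + \lambda H_{m_n}(\lambda)\, x + c^1_{m_n}(\lambda)\Psi_{m_n}(\lambda;x) \label{}
\end{align}
on $[0,1]$, with $\Phi^1_{m_n} := \varphi^1_{m_n} - 1 - \lambda G^1_{m_n}$ and $\Psi_{m_n} := \psi_{m_n} - x$, together with the observation that the tail contribution on $[1,\infty)$ vanishes by assumption (iii) (since $1 - g_{m_n}$ is bounded and $j_n \to 0$ there). Theorem \ref{convequiv} combined with (ii) gives $H_{m_n}(\lambda) \to H_m(\lambda) - \sigma_1^2\lambda$, and Theorem \ref{charfunc} supplies $c^1_{m_n}(\lambda) = \lambda H_{m_n}(\lambda) - \lambda m_n(1)$, which stays bounded in $n$. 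By (iv) the middle term contributes
\begin{align}
\lambda H_{m_n}(\lambda)\int_0^1 x\, j_n(dx) \to \kappa_1\lambda\bigl(H_m(\lambda) - \sigma_1^2\lambda\bigr). \label{}
\end{align}

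The main obstacle, and where the present theorem departs from Theorem \ref{convtojump}, is the analysis of $\int_0^1\bigl(-\Phi^1_{m_n} + c^1_{m_n}\Psi_{m_n}\bigr)\, dj_n$, which in the earlier setting vanished for free because $\sigma_1 = 0$. Using Propositions \ref{estofpsi} and \ref{estofphi^d} I would extract the principal second-order parts
\begin{align}
\Phi^1_{m_n}(x) = \lambda^2\, s\bullet m_n \bullet G^1_{m_n}(x) + O(\lambda^3\cdot \text{err}), \qquad \Psi_{m_n}(x) = \lambda\, s\bullet m_n \bullet s(x) + O(\lambda^2\cdot \text{err}), \label{}
\end{align}
with the quantitative remainders controlled by $S_{m_n}$ and $E^d_{m_n}$. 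Decomposing $G^1_{m_n} = G_{m_n} - m_n(1)\,s$ and substituting into $-\Phi^1_{m_n} + c^1_{m_n}\Psi_{m_n}$ produces an algebraic cancellation: the $m_n(1)\,s\bullet m_n\bullet s$ contributions from $-\Phi^1_{m_n}$ and from $c^1_{m_n}\Psi_{m_n}$ (via the $-\lambda m_n(1)$ piece of $c^1_{m_n}$) exactly offset one another, leaving integrals of $s\bullet m_n\bullet G_{m_n}$ and of $H_{m_n}\, s\bullet m_n\bullet s$ against $j_n$. The first is pinned down by hypothesis (v), yielding $\lambda^2\kappa_2\sigma_1^2$. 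For the second, an integration-by-parts identity expresses $s\bullet m_n\bullet s(x)$ as a combination of $xG_{m_n}(x)$ and $\int_0^x y\, m_n(y)\,dy$; a Schwarz estimate as in \eqref{eq140} together with (ii) and (iv) then identifies its limiting contribution. The remainder bounds from Propositions \ref{estofpsi} and \ref{estofphi^d}, uniform in $n$ thanks to (ii), ensure that the $O(\lambda^3)$ errors integrate to zero against $j_n$. Careful bookkeeping of these pieces should combine to give the missing $-\tfrac{(\kappa_1-\kappa_2)\sigma_1^2}{2}\lambda^2$, completing the identification of the limiting Laplace exponent. The delicate point throughout is that the individual contributions no longer vanish: one must extract exact limits, not merely upper bounds, and the hypothesis (v) is precisely designed to capture the extra term that (ii) alone cannot control.
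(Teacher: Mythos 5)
Your proposal follows the same strategy as the paper: reduce to pointwise convergence of Laplace exponents via Proposition \ref{continuityofLT}, work with the identity $1-g_{m_n}+\lambda G_{m_n}=-\Phi^1_{m_n}+\lambda H_{m_n}x+c^1_{m_n}\Psi_{m_n}$, extract the next order, and invoke hypothesis (v) for the extra quadratic term. The paper arranges the algebra slightly differently --- it pulls $\lambda^2\, s\bullet m_n\bullet G^1_{m_n}$ out of $\varphi^1_{m_n}$ by writing $\Phi^2_{m_n}$, and disposes of $c^1_{m_n}\Psi_{m_n}$ by the same vanishing estimate used in Theorem \ref{convtojump} rather than expanding $\Psi_{m_n}$ --- but your explicit cancellation of the two $m_n(1)\,s\bullet m_n\bullet s$ contributions is precisely the step needed (implicit in the paper's ``we obtain \eqref{eq119} from (v)'') to pass from the $G^1_{m_n}$ in the expansion to the $G_{m_n}$ appearing in hypothesis (v), so the content is the same up to bookkeeping.

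Two points to be careful about. For $\kappa_1\geq\kappa_2$, the paper gives a direct pointwise bound $-s\bullet m_n\bullet G_{m_n}(x)/x\leq \int_0^x\tilde m_n^2 - m_n(1)(m_n\bullet s)(x)$ and lets (iv), (v) finish; your appeal to nonnegativity of the Gaussian coefficient of a limiting L\'evy exponent is valid in spirit but, as written, presupposes that the pointwise limit of Laplace exponents is itself a spectrally positive L\'evy exponent (i.e.\ that the limiting random variable $\sigma B(1)+T(m;\kappa)$ exists), which is exactly what $\kappa_1\geq\kappa_2$ is supposed to secure --- the paper's direct estimate avoids this slight circularity. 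Second, if you actually carry the bookkeeping to the end, the pieces combine to $\kappa_1\lambda H_m(\lambda)-(\kappa_1-\kappa_2)\sigma_1^2\lambda^2$ (matching the paper's \eqref{eq119} up to the apparently omitted factor $\kappa$), and \emph{not} the $-\tfrac{1}{2}(\kappa_1-\kappa_2)\sigma_1^2\lambda^2$ you anticipate from a literal reading of ``standard Brownian motion''; the $B$ in the statement should be understood as normalized so that $\bE[B(t)^2]=2t$ (the diffusion with generator $d^2/dx^2$, i.e.\ Lebesgue speed measure), which absorbs the factor of $2$. Without reconciling this normalization your final identification will appear to come out wrong by a factor of two even though the computation is correct.
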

 \begin{proof}
 	From the same argument in Theorem \ref{convtojump}, it is enough to show that $\kappa_1 \geq \kappa_2$ and, for every $\lambda > 0$,
 	\begin{align}
 	\lim_{n \to \infty}\int_{0}^{1}(1-g_{m_n}(\lambda;x)) + \lambda G_{m_n}(x))j_n(dx) = \lambda H_m(\lambda) - \sigma^2\lambda^2. \label{eq119}
 	\end{align}
 	Fix $\lambda > 0$. From \eqref{defofg}, we have
 	\begin{align}
 		g_{m_n} = \varphi^1_{m_n} - c^1_{m_n} \psi_{m_n} = (1 + \lambda G^1_{m_n} + \lambda^2 s \bullet {m_n} \bullet G^1_{m_n} + \Phi^2_{m_n}) - c^1_{m_n} (x + \Psi_{m_n}), \label{}
 	\end{align}
 	where
 	\begin{align}
 		\Phi^2_{m_n} = \varphi^1_{m_n} - 1 - \lambda G^1_{m_n} - \lambda^2 s \bullet {m_n} \bullet G^1_{m_n}. \label{}
 	\end{align}
 	Take $\delta \in (0,1)$. From Proposition \ref{estofphi^d}, the assumption (ii) and the similar argument in Theorem \ref{convtojump}, we have for a constant $C > 0$,
 	\begin{align}
 		\int_{0}^{1}|\Phi^2_{m_n}|dj_n \leq C\lambda^3 (m_n \bullet s)(\delta)\int_{0}^{\delta}xj_n(dx) + C\lambda^3\int_{\delta}^{1}xj_n(dx), \label{eq130}
 	\end{align}
 	and we obtain $\eqref{eq130} \to  0 \ (n \to \infty)$.
 	Since we have
 	\begin{align}
 		1 - g_{m_n} + \lambda G_{m_n} &= 1 - ((1 + \lambda G^1_{m_n} + \lambda^2 s \bullet {m_n} \bullet G^1_{m_n} + \Phi^2_{m_n}) - c^1_{m_n} (x + \Psi_{m_n})) + \lambda G_m \label{} \\ 		
 		&=- \lambda^2 s \bullet {m_n} \bullet G^1_{m_n} - \Phi^2_{m_n} + \lambda H_{m_n} x +c^1_{m_n} \Psi_{m_n}, \label{}
 	\end{align}
 	we obtain \eqref{eq119} from the assumption (v).
	Finally, we show $\kappa_1 \geq \kappa_2$.
	Since it holds that for $x \in [0,1]$,
	\begin{align}
	-s\bullet m \bullet G_{m_n}(x) &= -s\bullet m \bullet G^1_{m_n}(x) -m_n(1)\int_{0}^{x}(m_n \bullet s)(y)dy \label{} \\
	&\leq x  \int_{0}^{x}\tilde{m}_n(y)^2dy -xm_n(1)(m_n \bullet s)(x), \label{}
	\end{align}
	from the assumptions (iv) and (v), we obtain $\kappa_1 \geq \kappa_2$.
 \end{proof}

 \section{Scaling limit of inverse local times}\label{section: scalinglimit}
 Applying the continuity theorems shown in the previous section, we establish the scaling limit of $\eta_{m,j}$. 
 
 We define a string $m^{(\alpha)} \  (\alpha \in (0,2))$ which is the speed measure of a Bessel process of dimension $2 -2 \alpha$ under the natural scale. 
 \begin{Def}
 	For $\alpha \in (0,2)$, we define for $x > 0$
 	\begin{equation}
 	m^{(\alpha)}(x) = \left\{
 	\begin{aligned}
 	&(1 - \alpha)^{-1}x^{1/\alpha -1} & &\text{if} \ \alpha \in (0,1), \label{} \\
 	&\log x & &\text{if} \ \alpha = 1, \label{} \\
 	&-(\alpha - 1)^{-1}x^{1/\alpha -1} & &\text{if} \ \alpha \in (1,2). \label{}
 	\end{aligned}
 	\right. \label{eq131}
 	\end{equation}
 \end{Def}
 \begin{Rem}
 	For $\alpha \in (0,2)$, we have for $\lambda > 0$
 	\begin{align}
 	H_{m^{(\alpha)}}(\lambda) =
 	\begin{cases}
 	\frac{\Gamma(2-\alpha)}{\Gamma(\alpha)} \frac{\alpha^{\alpha-1}}{1-\alpha} \lambda^{\alpha-1}, & \text{if}\ \alpha \in (0,1), \\
 	- (\log \lambda + 2\gamma), & \text{if}\ \alpha =1, \\
 	-\frac{\Gamma(2-\alpha)}{\Gamma(\alpha)} \frac{\alpha^{\alpha-1}}{\alpha-1} \lambda^{\alpha-1}, & \text{if}\ \alpha \in (1,2), \\
 	\end{cases} \label{}
 	\end{align}
 	where $\gamma$ is Euler's constant. Therefore the process $T(m^{(\alpha)};t)$ is a $\alpha$-stable process without negative jumps. See \cite{Kotani:Krein'sstrings} for the details.
 \end{Rem}
 We reduce the scaling limit of $\eta_{m,j}$ to the continuity of it with respect to $m$ and $j$ by the change of variables.
 For every $m \in \cM_1$, $a,b > 0$ and $x > 0$, it holds that 
 	\begin{align}
 	g_{m}(a\lambda;bx) &= g_{am^b}(b\lambda;x), \label{} \\
 	\psi_{m}(a\lambda;bx) &= b\psi_{am^b}(b\lambda;x), \label{} \\
 	\varphi^1_{m}(a\lambda;bx) &= \varphi^1_{am^b}(b\lambda;x), \label{}
 	\end{align}
 	where $m^b(x) = m(bx)$.
 	
 In order to show the tali behavior of $\eta_{m,j}$, we utilize the Tauberian theorems for finite Radon measures which were used in \cite{Kasahara:Tailsof}.
 \begin{Thm}{\label{KasaharaTauberian}}{\rm (Kasahara\cite[Theorem 2.1]{Kasahara:Tailsof})}
 	Let $\mu$ be a finite Radon measure on $[0,\infty)$ and $K$ be a slowly varying function at $\infty$. Define 
 	\begin{align}
 	f(\lambda) = \int_{0-}^{\infty}\mathrm{e}^{-\lambda x}\mu(dx) \ (\lambda > 0). \label{}
 	\end{align}
 	Then for $\beta > 0$ and $n > \beta$, the following are equivalent:
 	\begin{enumerate}
 		\item $\mu [x,\infty) \sim x^{-\beta} K(x) \ (x \to \infty)$,
 		\item $(-1)^n \frac{d^n}{d\lambda^n}f(\lambda) \sim \beta \Gamma(n - \beta)\lambda^{\beta - n}K(1/\lambda) \ (\lambda \to +0)$.
 	\end{enumerate}
 \end{Thm}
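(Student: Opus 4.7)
The plan is to reduce this equivalence to the classical Karamata Tauberian theorem by working not with $\mu$ itself but with the auxiliary measure $\nu(dy) = y^n\mu(dy)$, whose distribution function $V(x) := \int_{[0,x]} y^n\mu(dy)$ satisfies $\int_0^\infty e^{-\lambda x}\,dV(x) = (-1)^n f^{(n)}(\lambda)$. A single integration by parts, using the tail $\bar\mu(y) := \mu[y,\infty)$, gives the key identity
\begin{align*}
V(x) = n\int_0^x y^{n-1}\bar\mu(y)\,dy - x^n \bar\mu(x).
\end{align*}
Since $V$ is non-decreasing with $V(0)=0$, the classical Karamata Tauberian theorem applies directly to $V$: the asymptotic $V(x) \sim c\,x^{n-\beta}K(x)$ as $x \to \infty$ is equivalent to $\int_0^\infty e^{-\lambda x}\,dV(x) \sim c\,\Gamma(n-\beta+1)\,\lambda^{\beta-n}K(1/\lambda)$ as $\lambda \to 0+$.

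For the implication (i) $\Rightarrow$ (ii), I would start from $\bar\mu(x) \sim x^{-\beta}K(x)$ and apply Karamata's direct theorem (valid because $n-1-\beta>-1$) to obtain $n\int_0^x y^{n-1}\bar\mu(y)\,dy \sim \frac{n}{n-\beta}x^{n-\beta}K(x)$. Combined with $x^n\bar\mu(x) \sim x^{n-\beta}K(x)$ in the displayed identity, this yields $V(x) \sim \frac{\beta}{n-\beta}x^{n-\beta}K(x)$. The Abelian half of Karamata's theorem then produces (ii), with the constant $\beta\,\Gamma(n-\beta)$ emerging as $\frac{\beta}{n-\beta}\,\Gamma(n-\beta+1)$.

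For the converse (ii) $\Rightarrow$ (i), the Tauberian half of Karamata's theorem converts the asymptotic of $(-1)^n f^{(n)}(\lambda)$ into $V(x) \sim \frac{\beta}{n-\beta}x^{n-\beta}K(x)$. The nontrivial step is then to invert the integral identity and extract regular variation of $\bar\mu$ from that of $V$. Here I would exploit the monotonicity of $\bar\mu$ via the sandwich
\begin{align*}
\bar\mu(\theta x)\cdot \tfrac{\theta^n-1}{n}\,x^n \;\le\; \int_x^{\theta x} y^{n-1}\bar\mu(y)\,dy \;\le\; \bar\mu(x)\cdot \tfrac{\theta^n-1}{n}\,x^n \qquad (\theta > 1),
\end{align*}
rewriting the middle integral in terms of $V$ and $x^n\bar\mu(x)$ using the key identity, and then letting $\theta \downarrow 1$ to squeeze $\limsup_{x\to\infty} x^\beta\bar\mu(x)/K(x)$ and $\liminf_{x\to\infty} x^\beta\bar\mu(x)/K(x)$ to the common value $1$. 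This is the regular-variation analogue of the monotone density theorem, transplanted from densities to tail masses.

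The main obstacle is precisely this last inversion: while Karamata's Tauberian theorem hands over the asymptotic of the integrated quantity $V(x)$ for free, recovering the asymptotic of the tail $\bar\mu$ itself requires an additional structural input, namely the monotonicity of $\bar\mu$. Making the sandwich collapse in the right way and tracking the slowly varying factor $K$ through $K(\theta x)/K(x)\to 1$ is the heart of the argument; the remaining steps are routine bookkeeping with Karamata's theorem and elementary regular-variation identities.
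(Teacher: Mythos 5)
This theorem is cited from Kasahara's paper and the present paper gives no proof of it, so there is nothing internal to compare against; I will assess your argument on its own terms. Your overall strategy is sound and, as far as I can tell, close to the standard one: pass to $\nu(dy)=y^{n}\mu(dy)$ with distribution function $V$, use the identity $V(x)=n\int_0^{x}y^{n-1}\bar\mu(y)\,dy - x^{n}\bar\mu(x)$, and invoke Karamata. The direction (i)\,$\Rightarrow$\,(ii) is handled correctly: Karamata's integration theorem (valid because $n-1-\beta>-1$) gives $n\int_0^{x}y^{n-1}\bar\mu(y)\,dy\sim\tfrac{n}{n-\beta}x^{n-\beta}K(x)$, subtracting $x^{n}\bar\mu(x)\sim x^{n-\beta}K(x)$ gives $V(x)\sim\tfrac{\beta}{n-\beta}x^{n-\beta}K(x)$, and the Abelian half produces the stated constant since $\tfrac{\beta}{n-\beta}\,\Gamma(n-\beta+1)=\beta\,\Gamma(n-\beta)$.

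The gap is in the inversion (ii)\,$\Rightarrow$\,(i). When you substitute the key identity into your sandwich, the $\bar\mu$-terms largely cancel and you are left with
\begin{align*}
\frac{V(\theta x)-V(x)}{(\theta x)^{n}}\ \le\ \bar\mu(x)-\bar\mu(\theta x)\ \le\ \frac{V(\theta x)-V(x)}{x^{n}} ,
\end{align*}
which controls only the increment $\bar\mu(x)-\bar\mu(\theta x)$, and that increment vanishes as $\theta\downarrow1$. The squeeze on $\limsup$ and $\liminf$ of $x^{\beta}\bar\mu(x)/K(x)$ does not follow; this is \emph{not} the same as the monotone density theorem, where the sandwich $u(x)(\theta-1)x\le U(\theta x)-U(x)\le u(\theta x)(\theta-1)x$ bounds $u$ itself. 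Here, the analogous bound involves $\tilde W(\theta x)-\tilde W(x)$ with $\tilde W(x)=\tfrac{1}{n}\bigl(V(x)+x^{n}\bar\mu(x)\bigr)$, whose asymptotics cannot be read off from $V$ alone without already knowing $x^{n}\bar\mu(x)$. Two ways to close this: (a) telescope $\bar\mu(x)=\sum_{k\ge0}\bigl(\bar\mu(\theta^{k}x)-\bar\mu(\theta^{k+1}x)\bigr)$, bound each term by the increment estimate, and justify interchanging $\sum_{k}$ with $x\to\infty$ using Potter's bounds for $V$ (which make the series dominated by a geometric one); or, cleaner, (b) invert the identity explicitly: one checks $\tfrac{d}{dx}\bigl[\tilde W(x)x^{-n}\bigr]=-V(x)x^{-n-1}$, and since $\tilde W(x)x^{-n}\to0$ at infinity, $\bar\mu(x)=n\int_x^{\infty}V(t)\,t^{-n-1}\,dt - V(x)\,x^{-n}$. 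Two applications of Karamata's integration theorem then give $\bar\mu(x)\sim\bigl(\tfrac{n}{n-\beta}-\tfrac{\beta}{n-\beta}\bigr)x^{-\beta}K(x)=x^{-\beta}K(x)$. The second route is the actual analogue you were reaching for, and it also makes the monotonicity of $\bar\mu$ unnecessary at this step.
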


 Now we show the scaling limit of $\eta_{m,j}$.
 \begin{Thm}\label{alpha1-2}
 	Let $m \in \cM_1$, $j$ be a Radon measure on $(0,\infty)$ and  $K$ be a slowly varying function at $\infty$ and, assume $(m,j)$ satisfies $\mathrm{(C)}$.
 	Suppose the following hold: 
 	\begin{enumerate}
 		\item $m(x,\infty) \sim (\alpha-1)^{-1}x^{1/\alpha - 1}K(x) \ (x \to \infty)$ for a constant $\alpha \in (1,2)$,
 		\item $\kappa := \int_{0}^{\infty}xj(dx) < \infty$.
 	\end{enumerate} 
 	Then we have
 	\begin{align}
 	&\frac{1}{\gamma^{1/\alpha}K(\gamma)}(\eta_{m,j}(\gamma t) - b\gamma t) \xrightarrow[\gamma \to \infty]{d}T(m^{(\alpha)};\kappa t) \ \text{on}\ \bD, \label{} \\
 	&n_{m,j}[T_0 > s] \sim \frac{\kappa\alpha^{\alpha-1}}{\Gamma(\alpha)} s^{-\alpha}L^{\sharp}(s)^{-\alpha} \ (s \to \infty), \label{eq110}
 	\end{align}
 	where $L^{\sharp}(x)$ be a de Bruijn conjugate of $L(x) = K(x^{\alpha})$ and $b = \int_{0}^{\infty}j(dx)\int_{0}^{x}m(y,\infty)dy$.
 \end{Thm}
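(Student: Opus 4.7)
The plan is to prove the distributional limit by a scaling reduction to the continuity theorem (Theorem~\ref{convtojump}) and then to extract the tail asymptotic~\eqref{eq110} via Kasahara's Tauberian theorem (Theorem~\ref{KasaharaTauberian}).

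For the first assertion, I introduce the rescaled pair
\[
m_\gamma(x) := m(\gamma x)/[\gamma^{1/\alpha - 1} K(\gamma)], \qquad \tilde{j}_\gamma(dx) := \gamma\, j(\gamma\,dx),
\]
after normalizing $m$ so that $m(\infty) = 0$ (permissible since only $dm$ enters the construction of $X_{m,j}$, and necessary for $m_\gamma$ to admit a finite pointwise limit when $\alpha > 1$). The scaling identities for $g_m$, $\psi_m$ and $\varphi^1_m$ yield
\[
\chi_{m_\gamma,\tilde{j}_\gamma}(\lambda) \;=\; \gamma\, \chi_{m,j}\bigl(\lambda/[\gamma^{1/\alpha}K(\gamma)]\bigr),
\]
so that $\eta_{m,j}(\gamma t)/[\gamma^{1/\alpha}K(\gamma)]$ has the same law as $\eta_{m_\gamma,\tilde{j}_\gamma}(t)$. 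I then verify the hypotheses of Theorem~\ref{convtojump}: (i) $m_\gamma \to m^{(\alpha)}$ pointwise is immediate from assumption~(i) and slow variation of $K$; (ii) the $L^2$-condition comes from $m(z)^2 \sim (\alpha-1)^{-2} z^{2/\alpha-2} K(z)^2$ and Karamata's theorem, giving $\int_0^x m_\gamma(y)^2\,dy \to (\alpha-1)^{-2}\frac{\alpha}{2-\alpha} x^{2/\alpha-1}$, which vanishes as $x \to +0$ since $2/\alpha - 1 > 0$; and (iii)--(iv) $\tilde{j}_\gamma[1,\infty) = \gamma j[\gamma,\infty) \to 0$ together with $x\tilde{j}_\gamma \xrightarrow[\gamma\to\infty]{w} \kappa\delta_0$ on $[0,1]$ follow from $\kappa<\infty$ by dominated convergence. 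The centering $b_\gamma := -\int_0^1 G_{m_\gamma}(y)\,\tilde{j}_\gamma(dy)$ supplied by Theorem~\ref{convtojump} rewrites, after change of variable, as $\gamma^{1-1/\alpha}K(\gamma)^{-1}\cdot(-\int_0^\gamma G_m(x)\, j(dx))$; the normalization $m(\infty)=0$ yields $-G_m(x)=\int_0^x m(y,\infty)\,dy$, and Fubini identifies $-\int_0^\infty G_m(x)\,j(dx)=b$, matching the required centering $b\gamma t/[\gamma^{1/\alpha}K(\gamma)]$ in the limit. This completes the first claim.

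For the tail asymptotic, the preceding Laplace-exponent convergence is equivalent to
\[
\chi_{m,j}(\mu) - b\mu \;\sim\; -\kappa\, c_\alpha\,\mu^\alpha / L^\sharp(1/\mu)^\alpha \quad (\mu \to +0),
\]
with $c_\alpha := \Gamma(2-\alpha)\alpha^{\alpha-1}/[\Gamma(\alpha)(\alpha-1)]$; the de Bruijn conjugate $L^\sharp$ of $L(x):=K(x^\alpha)$ enters through the inversion of $\phi(\gamma):=\gamma^{1/\alpha}K(\gamma)$, for which $\phi(y^\alpha)=yL(y)$ gives $\phi^{-1}(z)=z^\alpha L^\sharp(z)^\alpha$ and hence $K(\gamma)\sim 1/L^\sharp(1/\mu)$ along $\mu\phi(\gamma)=1$. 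Writing $\nu(du):=n_{m,j}[T_0\in du]$, the auxiliary measure $\rho(du):=u\,\nu(du)$ is finite with total mass $b=\bE[\eta_{m,j}(1)]$, and $f(\lambda):=\chi_{m,j}'(\lambda)=\int e^{-\lambda u}\rho(du)$. Differentiating the asymptotic above (legitimate by the complete monotonicity of $b-\chi_{m,j}'$) and applying Theorem~\ref{KasaharaTauberian} with $\beta=\alpha-1$ and $n=1$ yields $\rho[x,\infty) \sim \kappa\alpha^\alpha/[(\alpha-1)\Gamma(\alpha)] \cdot x^{1-\alpha}/L^\sharp(x)^\alpha$. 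The integration-by-parts identity $\rho[x,\infty)=x\nu[x,\infty)+\int_x^\infty \nu[y,\infty)\,dy$, combined with the ansatz $\nu[x,\infty)\sim A x^{-\alpha}/L^\sharp(x)^\alpha$, then forces $A=\kappa\alpha^{\alpha-1}/\Gamma(\alpha)$ and gives~\eqref{eq110}.

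The main technical obstacles are the careful tracking of slowly varying quantities through the de Bruijn conjugate in the passage from $\gamma$-asymptotics to $\mu$-asymptotics, and the justification of differentiating the regular-variation asymptotic for $\chi_{m,j}(\lambda)-b\lambda$; the latter is standard, relying on the complete monotonicity of the successive derivatives of $\chi_{m,j}$.
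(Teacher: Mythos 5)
Your proof of the distributional limit is the same as the paper's: you introduce the same rescaling $m_\gamma, j_\gamma$ after normalizing $m(\infty)=0$, verify the same four hypotheses of Theorem~\ref{convtojump} with $m=m^{(\alpha)}$, and identify the centering via the same Fubini/Karamata computation. That part is correct.

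For the tail~\eqref{eq110} you take a mildly different intermediate step: the paper works with the finite measure $\nu(dx)=n_{m,j}[T_0>x]\,dx$, whose Laplace transform is $\chi_{m,j}(\lambda)/\lambda$, while you work with $\rho(du)=u\,n_{m,j}[T_0\in du]$, whose Laplace transform is $\chi_{m,j}'(\lambda)$. Either one feeds into Theorem~\ref{KasaharaTauberian}. But as written your argument has two genuine loose ends. First, Theorem~\ref{KasaharaTauberian} applied to $f=\chi_{m,j}'$ with $n=1$ needs the small-$\lambda$ asymptotic of $-f'=-\chi_{m,j}''$, so you must differentiate the regular-variation asymptotic for $\chi_{m,j}(\lambda)-b\lambda$ \emph{twice}, not once; and the property you invoke, ``complete monotonicity of $b-\chi_{m,j}'$,'' is not quite right (that function is not completely monotone). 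The correct justification is that each monotone-density step is legitimate because $\chi_{m,j}''(\lambda)=-\int u^2 e^{-\lambda u}\,n_{m,j}[T_0\in du]$ and $\chi_{m,j}'''$ are monotone. Second, and more seriously, the final step is not a proof: plugging the ansatz $n_{m,j}[T_0>x]\sim A x^{-\alpha}L^\sharp(x)^{-\alpha}$ into $\rho[x,\infty)=x\,n_{m,j}[T_0>x]+\int_x^\infty n_{m,j}[T_0>y]\,dy$ only shows that \emph{if} the asymptotic has that form then $A$ is determined; it does not establish that $n_{m,j}[T_0>\cdot]$ is regularly varying. To close the gap you should derive the tail directly, e.g.\ from
\begin{align}
n_{m,j}[T_0> x] \;=\; \int_{[x,\infty)}u^{-1}\rho(du)\;=\;x^{-1}\rho[x,\infty)-\int_x^\infty v^{-2}\rho[v,\infty)\,dv, \nonumber
\end{align}
and apply Karamata's theorem to the second term, which gives $n_{m,j}[T_0>x]\sim\frac{\alpha-1}{\alpha}\cdot x^{-1}\rho[x,\infty)$ and hence~\eqref{eq110}; alternatively follow the paper's route through $\nu(dx)=n_{m,j}[T_0>x]\,dx$ and a single application of the monotone density theorem at the end.
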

 \begin{proof}
 	We may assume $m(\infty) = 0$ without loss of generality.
 	Define
 	\begin{align}
 		m_\gamma(x) = \frac{m(\gamma x)}{\gamma^{1/\alpha-1}K(\gamma)}, \ j_\gamma(dx) = \gamma j(d(\gamma x)), \tilde{b}_\gamma = -\int_{0}^{\infty}G_{m_\gamma}dj_\gamma. \label{}
 	\end{align}
	Then
 	\begin{align}
 		\frac{1}{\gamma^{1/\alpha}K(\gamma)}(\eta_{m,j}(\gamma t) - b\gamma t)
 		 \overset{d}{=} \eta_{m_\gamma,j_\gamma}(t) - \tilde{b}_\gamma t \ \text{on} \ \bD. \label{}
 	\end{align}
	Then it is enough to show that $\{ m_\gamma \}_\gamma$ and $\{ j_\gamma \}_\gamma$ satisfy the assumptions of Theorem \ref{convtojump} with $m = m^{(\alpha)}$ and 
	\begin{align}
		\lim_{\gamma \to \infty}(\tilde{b}_\gamma - b_\gamma) = 0, \label{eq42}
	\end{align}
	where $b_\gamma = -\int_{0}^{1}G_{m_{\gamma}}dj_\gamma$. It is easily checked that the assumption (i) of Theorem \ref{convtojump} holds. Then it is enough to show that the following hold:
 	\begin{align}
 		\lim_{\delta \to 0}\limsup_{\gamma \to \infty}\int_{0}^{\delta}m_\gamma(x)^2dx = 0, \label{eq41} \\
 		j_\gamma(dx) \xrightarrow[\gamma \to \infty]{w}0 \ \text{on} \ [1,\infty] \label{eq98}, \\
 		x j_\gamma(dx) \xrightarrow[\gamma \to \infty]{w} \kappa\delta_0(dx) \ \text{on} \ [0,1]. \label{eq43}
 	\end{align}
 	From Karamata's theorem \cite[Proposition 1.5.8]{Regularvariation}, we have 
 	\begin{align}
 		\lim_{\gamma \to \infty}\int_{0}^{\delta}m_\gamma(x)^2dx = (\alpha-1)^{-2}\int_{0}^{\delta}x^{2/\alpha -2}dx = \frac{1}{(2/\alpha -1)(\alpha-1)^{2}}\delta^{2/\alpha -1}. \label{}
 	\end{align}
 	Hence we obtain \eqref{eq41}. Next we prove \eqref{eq42}. By changing variables, we have
 	\begin{align}
 		\int_{1}^{\infty}|G_{m_\gamma}|j_\gamma(dx) = \frac{1}{\gamma^{1/\alpha -1}K(\gamma)}\int_{\gamma}^{\infty}|G_{m}|j(dx). \label{}
 	\end{align}
 	Again by Karamata's theorem, it holds that $\lim_{x \to \infty}\left|\frac{G_{m}(x)}{\alpha x m(x)}\right| = 1$. Then for any $\epsilon > 0$, there exists some $R > 0$ such that for every $x \geq R$ it holds that $\left|\frac{G_{m};x)}{\alpha x m(x)}\right| < 1 + \epsilon$. Then for $x \geq R$, it follows that 
 	\begin{align}
 		\frac{1}{\gamma^{1/\alpha -1}K(\gamma)}\int_{\gamma}^{\infty}|G_m|j(dx)
 		\leq \frac{-(1 + \epsilon)m(\gamma)}{\gamma^{1/\alpha -1}K(\gamma)}\int_{\gamma}^{\infty}xj(dx). \label{}
 	\end{align}
 	Then from assumption (i) and (ii), we have \eqref{eq42}. 
 	Next we show \eqref{eq98}.
 	By changing variables, for every $\delta > 0$, from the assumption (ii), it holds that
 	\begin{align}
 	\lim_{\gamma \to \infty}j_\gamma(\delta, \infty) \leq \lim_{\gamma \to \infty}\delta^{-1}\int_{\gamma \delta}^{\infty}xj(dx) = 0 \label{}
 	\end{align}
 	We prove \eqref{eq43}. By changing variables, for every bounded continuous function $f : [0,1] \to \bR$, it holds that
 	\begin{align}
 		\lim_{\gamma \to \infty}\int_{0}^{1}f(x)xj_\gamma(dx) &= \lim_{\gamma \to \infty}\int_{0}^{\gamma}f(\gamma^{-1}x)xj(dx) = \kappa f(0). \label{}
 	\end{align}
 	Hence \eqref{eq43} holds.
 	Next we show \eqref{eq110}.
 	From Theorem \ref{alpha1-2}, for every $\lambda > 0$, it holds that
 	\begin{align}
 	\gamma^{\alpha} \left(\chi_{m,j}\left(\frac{\lambda }{\gamma K(\gamma)}\right) - \frac{b\lambda }{\gamma K(\gamma)}\right) \xrightarrow{\gamma \to \infty} \kappa\lambda H_{m^{(\alpha)}}(\lambda). \label{}
 	\end{align}
 	Here $b = - \int_{0}^{\infty}G_m(x)j(dx)$.
 	Hence it holds that
 	\begin{align}
 	u(\lambda) := \chi_{m,j}(\lambda) - b\lambda \sim C\lambda^{\alpha}L^{\sharp}(1/\lambda)^{-\alpha} \ (\lambda \to +0) \label{eq111}
 	\end{align} 
 	where $C = \kappa H_{m^{(\alpha)}}(1)$. Since $u(\lambda) = \int_{0}^{\infty}\bP^m_x[1 - \mathrm{e}^{-\lambda T_0}]j(dx) - b\lambda$, the function $-u'(\lambda)$ is completely monotone.
	From the monotone density theorem\cite[Theorem 1.7.2b]{Regularvariation}), we obtain 
 	\begin{align}
 	u'(\lambda) &\sim C\alpha \lambda^{\alpha - 1}L^{\sharp}(1/\lambda)^{-\alpha} \ (\lambda \to +0). \label{eq112}
 	\end{align}
 	Let $\nu$ be a Radon measure on $(0,\infty)$ defined by
 	\begin{align}
 	\nu(dx) = n_{m,j}[T_0 > x] dx. \label{}
 	\end{align}
 	We note that $\nu(0,\infty) = P[T_0] = -\int_{0}^{\infty}G_m(x)j(dx) = b$ and 
 	\begin{align}
 	f(\lambda ) := \int_{0}^{\infty}\mathrm{e}^{-\lambda x} \nu(dx) = \frac{\chi_{m,j}(\lambda)}{\lambda}. \label{}
 	\end{align}
 	Then it holds that
 	\begin{align}
 	f'(\lambda) = \frac{\lambda\chi_{m,j}'(\lambda) - \chi_{m,j}(\lambda)}{\lambda^2} 
 	= \frac{\lambda u'(\lambda) - u(\lambda)}{\lambda^2}. \label{}
 	\end{align}
 	From \eqref{eq111} and \eqref{eq112}, we have
 	\begin{align}
 	f'(\lambda) \sim C (\alpha - 1) \lambda^{\alpha - 2}L^{\sharp}(1/\lambda)^{-\alpha} \ (\lambda \to 0) \label{}
 	\end{align} 
 	Then from Theorem \ref{KasaharaTauberian}, it follows that
 	\begin{align}
 	\nu [s, \infty) \sim \frac{\kappa \alpha^{\alpha - 1}}{(\alpha - 1)\Gamma (\alpha)}s^{-\alpha + 1}L^{\sharp}(s)^{-\alpha} \ (s \to \infty), \label{}
 	\end{align}
 	and from the monotone density theorem, we obtain \eqref{eq110}.	
 \end{proof}
 
 \section{Limit theorems for the occupation times of bilateral jumping-in diffusions}\label{section: arcsinelaw}
 
 Let $m_+,m_- \in \cM_1$ and $j_+,j_-$ be Radon measures on $(0,\infty)$ and suppose $(m_+,j_+)$ and $(m_-,j_-)$ satisfy (C). In this section, we treat bilateral jumping-in diffusion processes i.e. Markov processes on $\bR$ which behaves like $X_{m_+,j_+}$ while $X$ is positive and like $-X_{m_-,j_-}$ while $X$ is negative and as soon as the process hits the origin it is thrown into $\bR \setminus \{ 0 \}$ according to $j_+$ and $j_-$.
 The precise definition is as follows. 
 Take two independent Poisson random measures $N_+$ and $\tilde{N}_-$ whose intensity measures are $n_{m_+,j_+}$ and $n_{m_-,j_-}$ on a common probability space. 
 Define $N_{m_-,j_-}(ds de) = \tilde{N}_{m_-,j_-}(ds d(-e))$. Then we define bilateral jumping-in diffusion process $X_{m_+,j_+;m_-,j_-}$ from the excursion point process $N_{m_+,j_+} + N_{m_-,j_-}$.
 
 Define 
 \begin{align}
 	A(t) = \int_{0}^{t}1_{[0,\infty)}(X_{m_+,j_+;m_-,j_-}(s))ds \label{}
 \end{align}
 for $t \geq 0$ and we study the fluctuation of the mean occupation time on the positive side $(1/t)A(t)$ as $t \to \infty$, in the case the limit degenerates, that is,
 \begin{align}
 	\lim_{t \to \infty}\frac{1}{t}A(t) \xrightarrow[t \to \infty]{P} p \in (0,1). \label{}
 \end{align}

\begin{Thm}\label{flucofA alpha1-2}
	Let $m_+,m_- \in \cM_1$ and $j_+,j_-$ be Radon measures on $(0,\infty)$. Suppose $(m_+,j_+)$ and $(m_-,j_-)$ satisfy (C).
	Assume the following hold:
	\begin{enumerate}
		\item $m_\pm(x,\infty)  \sim w_{\pm}(\alpha - 1)^{-1}x^{1/\alpha -1}K (x) \  (x \to \infty)$ for  constants $\alpha  \in (1,2)$, $w_{\pm} > 0$ and a slowly varying function $K$ at $\infty$, respectively,
		\item $\kappa_\pm:= \int_{0}^{\infty}xj_\pm(dx) < \infty$.
	\end{enumerate}
	Then we have
	\begin{align}
	g(\gamma)\left(\frac{A(\gamma t)}{\gamma} - p t\right) \xrightarrow[\gamma \to \infty ]{f.d.} (1-p)w_+T(m^{(\alpha)};\tilde{\kappa}_+t) - pw_-\tilde{T}(m^{(\alpha)};\tilde{\kappa}_-t), \label{eq157}
	\end{align}
	where 
	\begin{align}
	b_\pm &= 
	\int_{0}^{\infty}j_\pm(dx)\int_{0}^{x}m_\pm(y,\infty)dy,\quad p = \frac{b_+}{b_+ + b_-}, \quad g(\gamma) = \frac{\gamma}{\gamma^{1/\alpha}K(\gamma)}, \ \tilde{\kappa}_\pm = \frac{\kappa_\pm}{b_+ + b_-} \label{}
	\end{align}
	and, $T(m^{(\alpha)};t)$ and $\tilde{T}(m^{(\alpha)};t)$ are i.i.d.
\end{Thm}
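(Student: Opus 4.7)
The plan is to reduce the theorem to Theorem \ref{alpha1-2} via It\^o's excursion theory. Since $X_{m_+,j_+;m_-,j_-}$ is built from the superposition of two independent excursion Poisson random measures $N_{m_+,j_+}$ and $N_{m_-,j_-}$ (giving respectively positive and negative excursions), its inverse local time at $0$ splits as $\eta(s) = \eta_+(s) + \eta_-(s)$, where $\eta_+$ and $\eta_-$ are independent L\'evy subordinators with $\eta_\pm \stackrel{d}{=} \eta_{m_\pm,j_\pm}$. Writing $L$ for the right-continuous inverse of $\eta$ and $e_{L(t)}$ for the excursion straddling time $t$, excursion theory gives $A(t) = \eta_+(L(t)) - (\eta(L(t)) - t)\, 1_{\{e_{L(t)} \text{ positive}\}}$. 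Since $p = b_+/(b_+ + b_-)$ yields the cancellation $(1-p)b_+ - p b_- = 0$, we get
\begin{align}
A(t) - pt = (1-p)\rbra{\eta_+(L(t)) - b_+ L(t)} - p\rbra{\eta_-(L(t)) - b_- L(t)} - (\eta(L(t)) - t)\rbra{1_{\{e_{L(t)} \text{ positive}\}} - p}. \nonumber
\end{align}

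Apply Theorem \ref{alpha1-2} to each $(m_\pm, j_\pm)$, absorbing $w_\pm$ into the slowly varying function by replacing $K$ with $w_\pm K$, to obtain
\begin{align}
\frac{\eta_\pm(\gamma u) - b_\pm \gamma u}{\gamma^{1/\alpha}K(\gamma)} \xrightarrow[\gamma \to \infty]{d} w_\pm T(m^{(\alpha)}; \kappa_\pm u) \ \text{on} \ \bD, \nonumber
\end{align}
with the two limits independent since $\eta_\pm$ are. The strong law $L(\gamma t)/\gamma \to 1/(b_+ + b_-)$ in probability lets us substitute $u = L(\gamma t)/\gamma$ and compose the above with this deterministic-in-the-limit time change, yielding the joint f.d.d.\ convergence
\begin{align}
\frac{\eta_\pm(L(\gamma t)) - b_\pm L(\gamma t)}{\gamma^{1/\alpha}K(\gamma)} \xrightarrow[\gamma \to \infty]{f.d.} w_\pm T(m^{(\alpha)}; \tilde\kappa_\pm t). \nonumber
\end{align}
The remaining term in our identity is dominated by the overshoot $\eta(L(t)) - t$; since $\eta$ is a subordinator with finite mean $b_+ + b_-$, the renewal theorem forces the overshoot to have a proper limit law as $t \to \infty$ (its tail inherited from \eqref{eq110}), hence it is $O_P(1)$ and thus $o_P(\gamma^{1/\alpha}K(\gamma))$. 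Combining everything gives \eqref{eq157}.

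The main technical point is the substitution of the random time $L(\gamma t)/\gamma$ into the scaling limits of $\eta_+$ and $\eta_-$ \emph{jointly}, preserving both f.d.d.\ convergence and independence. Since the limit time is deterministic and the stable limits are stochastically continuous at each fixed point, this reduces to a Slutsky-type argument once independence of $\eta_\pm$ is used to factor the joint law. A secondary, routine check is the negligibility of the overshoot term at each f.d.d.\ test time, for which the classical renewal-theoretic limit distribution of the overshoot of a finite-mean subordinator with heavy-tailed L\'evy measure suffices.
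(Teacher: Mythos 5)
Your proof is correct and rests on the same excursion-theoretic reduction as the paper's: rewrite $A(t)-pt$ in terms of the two inverse local times evaluated at the local time $\ell(t)$, handle the main term by composing the $\bD$-valued scaling limits of $\eta_\pm$ with the time change $\ell(\gamma t)/\gamma \to t/(b_++b_-)$, and show the leftover is negligible on scale $\gamma^{1/\alpha}K(\gamma)$. The execution differs in two places from the paper's. For the error term, the paper uses the sandwich $\eta_{m_+,j_+}(\ell(t)-)\le A(t)\le\eta_{m_+,j_+}(\ell(t))$, so its leftover is the full straddling excursion length $\Delta\eta(\ell(\gamma t))$, controlled via the Poisson structure of the excursion point process together with the tail rate \eqref{eq110}; you use the exact identity (which makes the coefficient of $\ell(t)$ vanish via $(1-p)b_+-pb_-=0$) so the leftover is only the overshoot $\eta(\ell(t))-t$, which you control by the renewal theorem for finite-mean subordinators. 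That is a slightly sharper bound and, notably, it does not need the precise tail asymptotics \eqref{eq110} -- only finiteness of the mean -- so it is the more elementary route for this step. For the main term, the paper keeps everything at the level of processes: it establishes $J_1$-convergence of the pair $(g(\gamma)\tilde\eta(\gamma\cdot),\ \eta(\gamma\cdot)/\gamma)$, inverts the second coordinate, and invokes a composition theorem (cf.\ \cite{Whitt:Stochastic-process}) to land in $M_1$, which gives f.d.d.\ convergence since the limit has no fixed jumps; you instead plug in the random time directly and argue by Slutsky (the limit time is deterministic) plus a.s.\ stochastic continuity of the stable limit at each fixed time. Both are valid; your route is leaner but leaves the multi-test-time joint convergence to the reader, while the paper's $M_1$-argument packages it in one stroke. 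Two small slips to correct: the law of large numbers gives $\ell(\gamma t)/\gamma \to t/(b_++b_-)$, not $1/(b_++b_-)$, and this is convergence in probability (or a.s.), not an appeal to a ``strong law'' at a single scale.
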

\begin{proof}
	Step1: 
	Define the local time $\ell(t)$ at $0$ of $X_{m_+,j_+;m_-,j_-}$ as the right-continuous inverse of the process $\eta_{m_+,j_+} + \eta_{m_-,j_-}$.
	It is not difficult to check that for every $t > 0$, it holds that
	\begin{align}
	&\eta_{m_+,j_+}(\ell(t)-) \leq A(t) \leq \eta_{m_+,j_+}(\ell(t)) \label{} \\
	&\eta_{m_-,j_-}(\ell(t)-) \leq t - A(t) \leq \eta_{m_-,j_-}(\ell(t)) \label{}
	\end{align}
	Therefore the process $A(t) - pt = (1-p)A(t) - p(t - A(t))$ satisfies the following:
	\begin{align}
	&(1-p)\eta_{m_+,j_+}(\ell(t)-) - p\eta_{m_-,j_-}(\ell(t)) \label{} \\
	\leq& A(t) - pt \label{} \\
	\leq& (1-p)\eta_{m_+,j_+}(\ell(t)) - p\eta_{m_-,j_-}(\ell(t)-). \label{}
	\end{align}
	Hence for every $t \geq 0$, we have 
	\begin{align}
	&| (A( t) - p t) - ((1-p)\eta_{m_+,j_+}(\ell( t)) - p \eta_{m_-,j_-}(\ell( t) ))| \leq \Delta\eta(\ell(t)), \label{eq158}
	\end{align}
	where $\eta = \eta_{m_+,j_+} + \eta_{m_-,j_-}$ and $\Delta\eta(t) = \eta(t) - \eta(t-)$.
	
	Step2: Let us show the following:
	\begin{align}
	\frac{1}{\gamma^{1/\alpha}K(\gamma)}\Delta\eta(\ell(\gamma t)) 
	\xrightarrow[\gamma \to \infty]{f.d.} 0. \label{}
	\end{align}
	For this, it is enough to show
	\begin{align}
	\frac{1}{\gamma^{1/\alpha}K(\gamma)}\Delta\eta(\ell(\gamma t)) \xrightarrow[\gamma \to \infty]{P} 0 \ \text{for every} \ t \geq 0. \label{eq114}
	\end{align}
	Take $\epsilon > 0$ and $\delta > 0$ and define $c_{\pm}(\delta) = \frac{1 \pm \delta}{b_+ + b_-}$. Then it holds that
	\begin{align}
	&P\left[\Delta\eta(\ell(\gamma t)) > \gamma^{1/\alpha}K(\gamma)\epsilon\right] \label{} \\
	\leq& P[\ell(\gamma t) \not\in [c_-(\delta)\gamma t, c_+(\delta)\gamma t]] \label{eq108} \\
	&+ P\left[\sup_{s \in [c_-(\delta)\gamma t, c_+(\delta)\gamma t]}\Delta\eta(s) > \gamma^{1/\alpha}K(\gamma)\epsilon \right]. \label{eq107}
	\end{align}
	By the definition of $\ell(t)$, we have
	\begin{align}
	P[\ell(\gamma t) \not\in [c_-(\delta)\gamma t, c_+(\delta)\gamma t]]
	&\leq P[\eta(c_-(\delta)\gamma t) > \gamma t] + P[\eta(c_+(\delta)\gamma t) \leq \gamma t ]. \label{eq113} 
	\end{align}
	From Theorem \ref{alpha1-2}, we have $\eta(\gamma)/\gamma \xrightarrow[\gamma \to \infty]{P} b_+ + b_-$.
	Therefore we have $\eqref{eq113} \to 0 \ (\gamma \to \infty)$.
	Recall that $N := N_{m_+,j_+} + N_{m_-,j_-}$ is the excursion point process of $X_{m_+,j_+;m_-,j_-}$. Define $n = n_{m_+,j_+} + n_{m_-,j_-}$. Then we have
	\begin{align}
	&P\left[\sup_{s \in [c_-(\delta)\gamma t, c_+(\delta)\gamma t]}\Delta\eta(s) > \gamma^{1/\alpha}K(\gamma)\epsilon \right] \label{} \\
	=& 1 - P\left[ N\{ (s,e) \in [0,\infty) \times \bE \mid s \in [c_-(\delta)\gamma t, c_+(\delta)\gamma t],\ T_0(e) > \gamma^{1/\alpha}K(\gamma)  \} =0 \right] \label{} \\
	=& 1 - \exp (-\gamma t(c_+(\delta) - c_-(\delta)))n [T_0 > \gamma^{1/\alpha}K(\gamma)\epsilon]). \label{}
	\end{align}
	Therefore from Theorem \ref{alpha1-2}, we have
	\begin{align}
	\limsup_{\gamma \to \infty}P\left[\Delta\eta(\ell(\gamma t)) > \gamma^{1/\alpha}K(\gamma)\epsilon\right] \leq 1 - \exp \left( - t (c_+(\delta) - c_-(\delta)) \frac{\kappa \alpha^{\alpha - 1}}{\Gamma (\alpha)}\epsilon^{-\alpha} \right). \label{} 
	\end{align}
	Since $c_+(\delta) - c_-(\delta) \to 0$ as $\delta \to +0$, we obtain \eqref{eq114}.
	
	Step3: 
	We now reduce \eqref{eq157} to 
	\begin{align}
	g(\gamma)\tilde{\eta}(\ell (\gamma t)) &= 
	g(\gamma)\left((1-p)\frac{\eta_{m_+,j_+}(\ell(\gamma t))}{\gamma} - p \frac{\eta_{m_-,j_-}(\ell(\gamma t))}{\gamma}\right) \label{}  \\
	&\xrightarrow[\gamma \to \infty]{f.d.} (1-p)c_+T(m^{(\alpha)};\kappa_+t) - pc_-\tilde{T}(m^{(\alpha)};\kappa_-t), \label{}
	\end{align}
	and this argument is due to \cite{KasaharaKotani:Onlimit} and \cite{KasaharaWatanabe:Brownianrepresentation}.
	From Theorem \ref{alpha1-2}, we have 
	\begin{align}
	&\left(g(\gamma)\tilde{\eta}(\gamma t), \frac{1}{\gamma}\eta(\gamma t)\right) \label{} \\ 
	\xrightarrow[\gamma \to \infty]{d}& ((1-p)w_+T(m^{(\alpha)};\kappa_+t) - pw_-\tilde{T}(m^{(\alpha)};\kappa_-t), (b_+ + b_-)t) \ \text{on} \ \bD \ \text{in} \ J_1\text{-topology}. \label{}
	\end{align}
	Since the right-continuous inverse process of $\frac{1}{\gamma}\eta(\gamma t)$ is $\frac{1}{\gamma}\ell(\gamma t)$, it follows that
	\begin{align}
	g(\gamma)\tilde{\eta}(\ell (\gamma t)) 
	\xrightarrow[\gamma \to \infty]{d} (1-p)w_+T\left(m^{(\alpha)};\tilde{\kappa}_+t\right) - pw_-\tilde{T}\left(m^{(\alpha)};\tilde{\kappa}_-t\right) \ \text{on} \ \bD \ \text{in} \ M_1\text{-topology}. \label{}
	\end{align}
	See e.g.\cite{Whitt:Stochastic-process} on $M_1$-convergence.
	Since the limit process has no fixed jumps, $M_1$-convergence implies the finite-dimensional convergence.
\end{proof}

\section{The case: $\alpha = 2$}\label{section: alpha2}

In this section, we prove the result shown in Section \ref{section: scalinglimit} and \ref{section: arcsinelaw} in the case when the scale parameter $\alpha = 2$.

\begin{Thm}\label{alpha2}
	Let $m \in \cM_1$ and $j$ be a Radon measure on $(0,\infty)$ and assume $(m,j)$ satisfies $\mathrm{(C)}$.
	Suppose the following hold: 
	\begin{enumerate}
		\item The function $K(\gamma) = \left(\int_{0}^{\gamma}m(x,\infty)^2dx \right)^{1/2} $ varies slowly at $\infty$,
		\item $\kappa := \int_{0}^{\infty}xj(dx) < \infty$,
		\item $\kappa^0 := \int_{0}^{\infty}j(dx) \int_{0}^{x}dy\int_{0}^{y}dm(z)\int_{0}^{z}m(w,\infty) dw< \infty$,
		\item $\int_{1}^{\infty}G^1_m(x)j(dx) < \infty$.
	\end{enumerate}
	Then we have
	\begin{align}
	\frac{1}{\gamma^{1/2} K(\gamma)}(\eta_{m,j}(\gamma t) + b\gamma t) \xrightarrow[\gamma \to \infty]{d}B(\tilde{\kappa} t) \ \text{on} \ \bD, \label{eq145}
	\end{align}
	and 
	\begin{align}
	n_{m,j}[T_0 > s ] = o(s^{-2}L^{\sharp}(s)^{-2}) \ (s \to \infty), \label{eq143}
	\end{align}
	where $L^{\sharp}$ be a de Bruijn conjugate of $L(s) = K(s^2)$, $b = \int_{0}^{\infty}j(dx)\int_{0}^{x}m(y,\infty)dy$ and $\tilde{\kappa} = 2\kappa - \frac{2\kappa^0}{K(\infty)^2}$.
\end{Thm}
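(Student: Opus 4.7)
The plan is to follow the approach of Theorem \ref{alpha1-2}, using Theorem \ref{convtoBM} in place of Theorem \ref{convtojump} so that the limiting Lévy process is Brownian rather than stable. Without loss of generality assume $m(\infty)=0$, and introduce the rescaled quantities
\[
m_\gamma(x)=\frac{\gamma^{1/2}m(\gamma x)}{K(\gamma)},\qquad j_\gamma(dx)=\gamma\,j(d(\gamma x)),\qquad \tilde b_\gamma=-\int_0^\infty G_{m_\gamma}(x)\,j_\gamma(dx).
\]
By the scaling relations for $\psi_m,\varphi^1_m,g_m$ recorded at the beginning of Section \ref{section: scalinglimit}, one obtains the distributional identity
\[
\frac{1}{\gamma^{1/2}K(\gamma)}\bigl(\eta_{m,j}(\gamma t)-b\gamma t\bigr)\stackrel{d}{=}\eta_{m_\gamma,j_\gamma}(t)-\tilde b_\gamma t\quad\text{on }\bD,
\]
reducing \eqref{eq145} to verifying the hypotheses of Theorem \ref{convtoBM} for the family $(m_\gamma,j_\gamma)$, together with $\tilde b_\gamma-b_\gamma\to 0$, where $b_\gamma=-\int_0^1 G_{m_\gamma}(x)\,j_\gamma(dx)$.

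I would verify the hypotheses as follows. Slow variation of $K$ gives
\[
\int_0^\delta m_\gamma(y)^2\,dy=\frac{K(\gamma\delta)^2}{K(\gamma)^2}\longrightarrow 1,
\]
so condition (ii) of Theorem \ref{convtoBM} is satisfied with $\sigma_1^2=1$. Conditions (iii) and (iv) are handled exactly as in Theorem \ref{alpha1-2}, yielding $\kappa_1=\kappa$. The decisive condition (v) uses assumption (iii) of the present theorem: a direct substitution with $G_{m_\gamma}(x)=\gamma^{-1/2}K(\gamma)^{-1}G_m(\gamma x)$ and $dm_\gamma=\gamma^{1/2}K(\gamma)^{-1}dm(\gamma\,\cdot)$ yields
\[
(s\bullet m_\gamma\bullet G_{m_\gamma})(x)=\frac{1}{\gamma K(\gamma)^2}(s\bullet m\bullet G_m)(\gamma x),
\]
so that, after a change of variables and the Fubini identity $\kappa^0=-\int_0^\infty(s\bullet m\bullet G_m)(x)\,j(dx)$, condition (v) holds with $\kappa_2=\kappa^0/K(\infty)^2$ (read as $0$ when $K(\infty)=\infty$). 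This gives the Brownian variance parameter $(\kappa_1-\kappa_2)\sigma_1^2=\kappa-\kappa^0/K(\infty)^2$, which under the convention of Theorem \ref{convtoBM} produces the stated Brownian limit $B(\tilde\kappa t)$ with $\tilde\kappa=2\kappa-2\kappa^0/K(\infty)^2$. The control $\tilde b_\gamma-b_\gamma\to0$ is supplied by assumption (iv) exactly as in the corresponding step of the proof of Theorem \ref{alpha1-2}.

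For the tail asymptotic \eqref{eq143}, I would follow the Tauberian argument at the end of the proof of Theorem \ref{alpha1-2}: the Brownian limit gives $\chi_{m,j}(\lambda)-b\lambda=o(\lambda^2 L^\sharp(1/\lambda)^{-2})$ as $\lambda\downarrow 0$; differentiating and applying the monotone density theorem together with Theorem \ref{KasaharaTauberian} (with $\beta=2$) then yields $n_{m,j}[T_0>s]=o(s^{-2}L^\sharp(s)^{-2})$.

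The main obstacle is that the pointwise limit of $\{m_\gamma\}$ is the trivial string, which is not in $\cM_1$, so Theorem \ref{convtoBM} does not apply verbatim. My plan is to rerun its proof directly for the family $\{m_\gamma\}$, extending Theorem \ref{convequiv} (Kotani's bi-continuity) by a short ad hoc argument to identify the formal value of $\lim_\gamma H_{m_\gamma}(\lambda)$ in this degenerate regime, and then checking that the resulting contributions combine to kill the would-be stable component $T(m;\kappa t)$ and leave only the Gaussian part with the correct variance.
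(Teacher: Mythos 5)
For the main convergence \eqref{eq145}, your argument coincides with the paper's: same rescaling $m_\gamma, j_\gamma$, same reduction to Theorem \ref{convtoBM} with $\sigma_1^2=1$, $\kappa_1 = \kappa$, $\kappa_2 = \kappa^0/K(\infty)^2$, and the same handling of $\tilde b_\gamma - b_\gamma$ via hypothesis (iv). You are also right to flag that the limiting string $m\equiv 0$ is not in $\cM_1$ (the paper silently applies Theorem \ref{convtoBM} with $m\equiv 0$ anyway); your proposed fix---rerun the proof of Theorem \ref{convtoBM} for the concrete family $\{m_\gamma\}$ and extend Theorem \ref{convequiv} ad hoc to identify $\lim_\gamma H_{m_\gamma}(\lambda) = -\lambda$, so that $T(0;\cdot)\equiv 0$---is exactly what is needed and is a genuine improvement in rigor over what is written.

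The tail estimate \eqref{eq143} is where there is a real gap. You assert that the Brownian limit gives $u(\lambda):=\chi_{m,j}(\lambda)-b\lambda = o\bigl(\lambda^2 L^\sharp(1/\lambda)^{-2}\bigr)$ and then plan to differentiate and apply Theorem \ref{KasaharaTauberian} with $\beta=2$. Both steps fail. First, the Laplace-exponent convergence implied by \eqref{eq145} gives a genuine asymptotic equivalence $u(\lambda)\sim c\,\lambda^2 L^\sharp(1/\lambda)^{-2}$ for a nonzero constant $c<0$, \emph{not} little-$o$: the Gaussian part of the limit contributes an exact $\lambda^2$ term. Second, precisely because the $\lambda^2$ term in the L\'evy--Khintchine exponent is carried by the Gaussian component and not by the L\'evy measure, a first-order Tauberian theorem of the type of Theorem \ref{KasaharaTauberian} (which converts $\sim$-asymptotics of $f^{(n)}$ into $\sim$-asymptotics of $\mu[x,\infty)$) cannot see the $o$-statement you want; the index $\beta=2$ is exactly the degenerate one. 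The paper circumvents this by passing to the doubly-integrated object $\nu(ds)=\bigl(\int_s^\infty n_{m,j}[T_0>u]\,du\bigr)\,ds$ with $\hat\nu(\lambda) = -u(\lambda)/\lambda^2$, and then invoking a de Haan / second-order Tauberian theorem (\cite[Theorem 3.9.1]{Regularvariation}) applied to differences $\hat\nu(1/(\gamma t))-\hat\nu(1/\gamma)$, followed by two monotone-density steps, to extract $\nu(\gamma,\gamma s] = o(L^\sharp(\gamma)^{-2})$ and hence $n_{m,j}[T_0>s]=o(s^{-2}L^\sharp(s)^{-2})$. Your sketch as written does not reach the conclusion; you would need to replace the appeal to Theorem \ref{KasaharaTauberian} by this second-order Tauberian argument.
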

\begin{proof}  
	We may assume $m(\infty) = 0$ without loss of generality.
	Define
	\begin{align}
	m_\gamma(x) = \frac{\gamma^{1/2}}{K(\gamma)}m(\gamma x) \  \text{and} \ j_\gamma(dx) = \gamma j(d(\gamma x)). \label{}
	\end{align}
	Then we have
	\begin{align}
	\frac{1}{\gamma^{1/2} K(\gamma)}(\eta_{m,j}(\gamma t) - b\gamma t)
	\overset{d}{=} \eta_{m_\gamma,j_\gamma}(t) - b_\gamma t \ \text{on} \ \bD. \label{}
	\end{align}
	Then we show that $\{ m_\gamma \}_\gamma$ and $\{ j_\gamma \}_\gamma$ satisfy the assumptions of Theorem \ref{convtoBM} in the case:
	\begin{align}
	m \equiv 0,\ \kappa_1 = \int_{0}^{\infty}xj(dx),\ \kappa_2 = -\frac{1}{K(\infty)^2}\int_{0}^{\infty}j(dx) \int_{0}^{x}dy\int_{0}^{y}G_m(z)dm(z). \label{}
	\end{align}
	and the following holds:
	\begin{align}
	\lim_{\gamma \to \infty}\left( \tilde{b}_\gamma + \int_{0}^{1}G_{m_\gamma}(x)j_\gamma(dx) \right) = 0. \label{eq62}
	\end{align}
	It is easily checked that the assumptions (iii) and (iv) of Theorem \ref{convtoBM} hold.
	Since it holds that for every $0 \leq a < b$
	\begin{align}
	\int_{a}^{b}m_\gamma(x)^2dx = \frac{1}{K(\gamma)^2}\int_{\gamma a}^{\gamma b}m(y)^2dy, \label{}
	\end{align}
	then from the assumption (i), we have 
	\begin{align}
	\lim_{\gamma \to \infty}m_\gamma(x) = 0 \  \text{for every}\  x > 0, \label{} \\
	\lim_{\gamma \to \infty}\int_{0}^{\delta}m_\gamma(x)^2dx = 1 \ \text{for every} \   \delta > 0. \label{}
	\end{align}
	Then it is enough to show that the following and \eqref{eq62} hold:
	\begin{align}
	-s\bullet m_\gamma \bullet G_{m_\gamma}j_\gamma(dx) &\xrightarrow[\gamma \to \infty]{w}\kappa_2 \delta_0(dx) \ \text{on} \ [0,1]. \label{eq63}
	\end{align}
	Here $\kappa_2 = -\frac{1}{K(\infty)}\int_{0}^{\infty}j(dx)\int_{0}^{x}dy\int_{0}^{y}G_mdm(z)$.
	First, we prove \eqref{eq62}. From the change of variables and Schwarz's inequality, we have
	\begin{align}
	|G_{m_\gamma}| = -\frac{1}{\gamma^{1/2} K(\gamma)}\int_{0}^{\gamma x}m(y)dy
	\leq x^{1/2} \frac{K(\gamma x)}{K(\gamma)}. \label{}
	\end{align}
	Since $K$ varies slowly at $\infty$, from Potter's theorem \cite[Theorem 1.5.6]{Regularvariation}, we have $\frac{K(\gamma x)}{K(\gamma )} \leq 2 x^{1/2}$ for large $\gamma > 0$ and every $x \geq 1$. Hence it follows that
	\begin{align}
	|G_{m_\gamma}| \leq 2x \ \text{for large}\  \gamma  > 0 \ \text{and every}\  x \geq 1. \label{}
	\end{align}
	Hence we obtain \eqref{eq62}. Next we prove \eqref{eq63}.
	Let $f: [0, 1] \to \bR$ be a bounded, continuous function. By the change of variables, we have
	\begin{align}
	&\int_{0}^{1}f(x)(-s\bullet m_\gamma \bullet G_{m_\gamma})j_\gamma(dx) \label{} \\
	=& \gamma\int_{0}^{\gamma}f(\gamma^{-1}x)(-s\bullet m_\gamma \bullet G_{m_\gamma}(\gamma^{-1}x))j(dx)  \label{} \\
	=& \frac{1}{K(\gamma)^2}\int_{0}^{\gamma}f(\gamma^{-1}x)\left( -\int_{0}^{x}dy\int_{0}^{y}G_mdm(z) \right) j(dx). \label{eq74}
	\end{align}
	From the dominated convergence theorem, we obtain \eqref{eq63}.
	Finally, we show \eqref{eq143}.
	Define 
	\begin{align}
	\nu(ds) =  \left(\int_{s}^{\infty}n_{m,j}[T_0 > u]du \right)ds \quad \text{and} \quad \hat{\nu}(\lambda) = \int_{0}^{\infty}\mathrm{e}^{-\lambda s}\nu(ds) = - \frac{u(\lambda)}{\lambda^2}, \label{}
	\end{align}
	where $u(\lambda) = \chi_{m,j}(\lambda) -b\lambda$.
	From \eqref{eq145}, it holds that $u(\lambda) \sim -\kappa\lambda^2L^{\sharp}(1/\lambda)^{-2} \ (\lambda \to +0 )$.
	Hence it follows that
	\begin{align}
	(f(1/(\gamma t)) - f(1/\gamma))L^{\sharp}(\gamma)^2 \xrightarrow{\gamma \to \infty} 0. \label{}
	\end{align}
	Then from Tauberian theorem \cite[Therem 3.9.1]{Regularvariation}, it holds for every $s > 1$,
	\begin{align}
	L^{\sharp}(\gamma)^2\nu (\gamma, \gamma s] \xrightarrow{\gamma \to \infty} 0. \label{}
	\end{align}
	Then from the monotone density theorem \cite[Theorem 1.7.2, 3.6.8]{Regularvariation}, we obtain \eqref{eq143}.
\end{proof}
\begin{Rem}
	Since it holds that $L^{\sharp}(\gamma)L(\gamma L^{\sharp}(\gamma)) \to 1 \ (\gamma \to \infty)$, when $K(\gamma) \to \infty \ (\gamma \to \infty)$, we have $L^{\sharp}(\gamma) \to 0 \ (\gamma \to \infty)$ and when $K(\gamma) \to c \in (0,\infty) \ (\gamma \to \infty)$, we have $L^{\sharp}(\gamma) \to 1/c \ (\gamma \to \infty)$. 
\end{Rem}
Applying Theorem \ref{alpha2}, the following theorem can be proved by the same argument as Theorem \ref{flucofA alpha1-2}, so we omit the proof.
\begin{Thm}\label{flucofA alpha2}
	Let $m_+,m_- \in \cM_1$ and $j_+,j_-$ be Radon measures on $(0,\infty)$. Assume $(m_+,j_+)$ and $(m_-,j_-)$ satisfy (C).
	Suppose the following conditions hold: 
	\begin{enumerate}
		\item $K_\pm(x) := \left( \int_{0}^{x}m_\pm(y,\infty)^2dy\right)^{1/2} \sim c_\pm K(x) \ (x \to \infty) $ for constants $c_\pm > 0$ and a slowly varying function $K$ at $\infty$,
		\item $\kappa_\pm := \int_{0}^{\infty}xj_\pm(dx) < \infty$
		\item $\kappa^0_\pm := \int_{0}^{\infty}j_\pm(dx) \int_{0}^{x}dy\int_{0}^{y}dm_\pm(z)\int_{0}^{z}m_\pm(w,\infty)dw < \infty$,
		\item $\int_{1}^{\infty}G^1_{m_\pm}(x)j_\pm(dx) < \infty$.
	\end{enumerate} 
	Then we have
	\begin{align}
	g(\gamma)\left(\frac{A(\gamma t)}{\gamma} - p t\right) \xrightarrow[\gamma \to \infty]{f.d.} (1-p)B(\tilde{\kappa}_+t) - p\tilde{B}(\tilde{\kappa}_-t), \label{}
	\end{align}
	where  
	\begin{align}
	a_\pm &= \int_{0}^{\infty}j_\pm(dx)\int_{0}^{x}m_\pm(y,\infty)dy,\quad p = \frac{a_+}{a_+ + a_-},\  g(\gamma) = \frac{\gamma}{\gamma^{1/2}K(\gamma)}, \label{} \\
	\tilde{\kappa}_\pm &= 2\kappa - \frac{2\kappa^0_\pm}{K_\pm(\infty)}, \label{}
	\end{align}
	and, $B$ and $\tilde{B}$ are independent Brownian motions.
\end{Thm}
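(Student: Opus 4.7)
The strategy is to mimic the three-step proof of Theorem \ref{flucofA alpha1-2}, replacing the $\alpha$-stable limit of Theorem \ref{alpha1-2} by the Brownian limit of Theorem \ref{alpha2}. As in that proof, let $\ell(t)$ denote the local time at $0$ of $X_{m_+,j_+;m_-,j_-}$, defined as the right-continuous inverse of $\eta := \eta_{m_+,j_+}+\eta_{m_-,j_-}$. The deterministic sandwich estimate \eqref{eq158} still gives
\begin{align}
\bigl| (A(\gamma t) - p\gamma t) - ((1-p)\eta_{m_+,j_+}(\ell(\gamma t)) - p\eta_{m_-,j_-}(\ell(\gamma t))) \bigr| \leq \Delta\eta(\ell(\gamma t)),
\end{align}
so once normalized by $g(\gamma)/\gamma = 1/(\gamma^{1/2}K(\gamma))$, the proof reduces to (a) showing the error term $\frac{1}{\gamma^{1/2}K(\gamma)}\Delta\eta(\ell(\gamma t))$ vanishes in probability, and (b) identifying the limit of the linear combination of the two inverse local times evaluated at the random time $\ell(\gamma t)$.

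For step (a), I would repeat the argument in Step 2 of the proof of Theorem \ref{flucofA alpha1-2}, splitting on whether $\ell(\gamma t)$ lies in the window $[c_-(\delta)\gamma t, c_+(\delta)\gamma t]$ with $c_\pm(\delta) = (1\pm\delta)/(a_++a_-)$. The law of large numbers $\eta(\gamma)/\gamma \xrightarrow{P} a_+ + a_-$ (which follows from assumption (i)-(iv) and the remark that for $\alpha=2$ the Brownian fluctuations are of lower order than $\gamma$) controls the first term. For the second term I would use that on this window, $\Delta\eta$ exceeding $\gamma^{1/2}K(\gamma)\epsilon$ is ruled out with high probability using the sharp tail estimate \eqref{eq143} from Theorem \ref{alpha2}, namely $n[T_0>s] = o(s^{-2}L^\sharp(s)^{-2})$ applied to $n = n_{m_+,j_+}+n_{m_-,j_-}$; since here $s=\gamma^{1/2}K(\gamma)\epsilon$ and $L^\sharp(\gamma^{1/2}K(\gamma))L(\gamma^{1/2}) \to 1$, the probability of such a jump appearing in a window of length $O(\gamma)$ is $o(1)$ as $\gamma\to\infty$ followed by $\delta\to 0$.

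For step (b), define $\tilde{\eta}(t) = (1-p)\eta_{m_+,j_+}(t) - p\eta_{m_-,j_-}(t)$. Since $\eta_{m_+,j_+}$ and $\eta_{m_-,j_-}$ are independent, Theorem \ref{alpha2} applied to each, together with the deterministic law of large numbers for $\eta/\gamma$, gives joint convergence
\begin{align}
\left(\frac{g(\gamma)}{\gamma}\tilde{\eta}(\gamma t),\ \frac{1}{\gamma}\eta(\gamma t)\right) \xrightarrow[\gamma\to\infty]{d} \bigl((1-p)B(\tilde{\kappa}_+t) - p\tilde{B}(\tilde{\kappa}_-t),\ (a_++a_-)t\bigr) \text{ on } \bD
\end{align}
in $J_1$-topology, with $B,\tilde{B}$ independent. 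Applying the right-continuous inverse (which is continuous in the $M_1$ sense for strictly increasing limits) yields
\begin{align}
\frac{g(\gamma)}{\gamma}\tilde{\eta}(\ell(\gamma t)) \xrightarrow[\gamma\to\infty]{d} (1-p)B(\tilde{\kappa}_+ t) - p\tilde{B}(\tilde{\kappa}_- t) \text{ on } \bD \text{ in } M_1\text{-topology},
\end{align}
exactly as in Step 3 of Theorem \ref{flucofA alpha1-2}. Since the Brownian limit is a.s.\ continuous, $M_1$-convergence upgrades to finite-dimensional convergence, and combining with step (a) gives \eqref{eq157}.

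The only nontrivial point compared to the $\alpha\in(1,2)$ case is step (a): the $o$-estimate \eqref{eq143} (rather than an exact tail asymptotic) must be converted into a Poisson probability bound; but this is actually an improvement since the decay is strictly faster than $s^{-2}L^\sharp(s)^{-2}$, so the corresponding window probability is even smaller than in the stable case and the same argument goes through a fortiori. No genuinely new ingredient is required.
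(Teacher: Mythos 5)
Your proposal is correct and is precisely the argument the paper has in mind: the paper explicitly states that Theorem \ref{flucofA alpha2} "can be proved by the same argument as Theorem \ref{flucofA alpha1-2}" with Theorem \ref{alpha2} replacing Theorem \ref{alpha1-2}, and you have carried out exactly that substitution, correctly observing that the $o$-tail bound \eqref{eq143} together with $L^{\sharp}(\gamma^{1/2}K(\gamma))\sim 1/K(\gamma)$ makes the jump-control step even easier, and that the $M_1$-to-f.d.\ upgrade is immediate because the Brownian limit is continuous.
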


\section{The case: $\alpha = 1$}\label{section: alpha1}
In this section, we prove the result shown in Section \ref{section: scalinglimit} and \ref{section: arcsinelaw} in the case when the scale parameter $\alpha = 1$.

To obtain the tail behavior of $\eta_{m,j}$ in the case $\alpha = 1$, we need the following version of Tauberian theorem.
 \begin{Thm}{\label{KasaharaTauberian2}}
	Let $\mu$ be a Radon measure on $[0,\infty)$ and $K$ be a slowly varying function at $\infty$. Define 
	\begin{align}
	\hat{\mu}(\lambda) = \int_{0-}^{\infty}\mathrm{e}^{-\lambda x}\mu(dx) \ (\lambda > 0). \label{}
	\end{align}
	Then the following are equivalent:
	\begin{enumerate}
		\item $\frac{\mu [x,\theta x)}{K(x)} \sim \log \theta \ (x \to \infty)$ for every $\theta > 1$,
		\item $-\hat{\mu}'(\lambda) \sim \frac{K(1/\lambda)}{\lambda} \ (\lambda \to +0)$.
	\end{enumerate}
\end{Thm}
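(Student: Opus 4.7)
The plan is to reduce the theorem to the classical Karamata Tauberian theorem via the auxiliary measure $\nu(dx) = x\,\mu(dx)$ on $[0,\infty)$. Since
\[
\hat{\nu}(\lambda) = \int_{0-}^{\infty} x\mathrm{e}^{-\lambda x}\mu(dx) = -\hat{\mu}'(\lambda),
\]
condition (ii) is exactly $\hat{\nu}(\lambda) \sim K(1/\lambda)/\lambda$ as $\lambda \to +0$. As $\nu$ is a non-negative Radon measure and $K$ is slowly varying, the Karamata Tauberian theorem with exponent $\rho=1$ (cf.\ \cite[Theorem 1.7.1]{Regularvariation}) yields the equivalence
\[
\hat{\nu}(\lambda) \sim \frac{K(1/\lambda)}{\lambda}\ (\lambda \to +0) \iff \nu[0,x] \sim xK(x)\ (x \to \infty).
\]
It thus suffices to prove that condition (i) is equivalent to $\nu[0,x] \sim xK(x)$.

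For the implication ``$\nu[0,x]\sim xK(x) \Rightarrow (\text{i})$'', I would rewrite $\mu[x,\theta x) = \int_x^{\theta x} y^{-1}\,\nu(dy)$ and integrate by parts:
\[
\mu[x,\theta x) = \frac{\nu[0,\theta x]}{\theta x} - \frac{\nu[0,x]}{x} + \int_x^{\theta x} \frac{\nu[0,y]}{y^2}\,dy.
\]
Substituting $\nu[0,y] \sim yK(y)$, the boundary terms contribute $K(\theta x) - K(x) = o(K(x))$ by slow variation of $K$, while the integral is asymptotic to $K(x)\int_x^{\theta x} dy/y = K(x)\log\theta$ by the uniform convergence theorem for slowly varying functions (\cite[Theorem 1.2.1]{Regularvariation}). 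This yields (i).

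For the converse ``(i) $\Rightarrow \nu[0,x] \sim xK(x)$'', fix $\theta > 1$ and $x_0 > 0$, choose $n$ with $x_0\theta^n \leq x < x_0\theta^{n+1}$, and partition
\[
\int_0^x y\,\mu(dy) = O(1) + \sum_{k=0}^{n-1}\int_{x_0\theta^k}^{x_0\theta^{k+1}} y\,\mu(dy) + \int_{x_0\theta^n}^x y\,\mu(dy).
\]
On each block $y \in [x_0\theta^k, x_0\theta^{k+1})$, the hypothesis gives $\mu[x_0\theta^k, x_0\theta^{k+1}) = K(x_0\theta^k)\log\theta\,(1+\epsilon_k)$ with $\epsilon_k \to 0$; combined with $y \asymp x_0\theta^k$ on the block, this sandwiches each block contribution between $x_0\theta^k K(x_0\theta^k)\log\theta\,(1+o(1))$ and $\theta$ times the same. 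Summing recognizes a Riemann sum for $\int_{x_0}^x K(t)\,dt \sim xK(x)$ by Karamata's theorem (\cite[Proposition 1.5.8]{Regularvariation}), and one obtains
\[
1 \leq \liminf_{x\to\infty}\frac{\int_0^x y\,\mu(dy)}{xK(x)} \leq \limsup_{x\to\infty}\frac{\int_0^x y\,\mu(dy)}{xK(x)} \leq \theta(1+\log\theta).
\]
Letting $\theta \downarrow 1$ forces both sides to $1$.

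The main technical obstacle is ensuring the $o(1)$ contributions from $\{\epsilon_k\}$ aggregate correctly under the sum; this is handled by splitting at a threshold $K_0$ beyond which $|\epsilon_k| < \epsilon$ and observing that the tail sum is $\leq \epsilon \cdot xK(x)(1+o(1))$, then sending $\epsilon \downarrow 0$. The uniform convergence theorem for $K$ ensures all asymptotics are uniform over the geometric partition, after which everything is routine bookkeeping.
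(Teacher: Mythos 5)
The paper itself gives no proof of Theorem \ref{KasaharaTauberian2}; it only cites Kasahara \cite[Theorem 2.2]{Kasahara:Tailsof}. Your reduction to the classical Karamata Tauberian theorem \cite[Theorem 1.7.1]{Regularvariation} via the tilted measure $\nu(dx)=x\,\mu(dx)$ is a valid and genuinely different route from the de~Haan (class $\Pi$) Tauberian machinery in \cite[Ch.~3]{Regularvariation}, which is the framework the cited result sits in. Condition (i) says precisely that $x\mapsto\mu[1,x)$ belongs to de~Haan's class $\Pi_K$, and your argument converts this into ordinary regular variation of $\nu[0,\cdot]$ of index $1$, after which the standard Karamata equivalence does the Abelian--Tauberian work. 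The forward implication via integration by parts is clean and correct: with $V(y)=\nu[0,y]\sim yK(y)$ the boundary terms contribute $K(\theta x)-K(x)=o(K(x))$ and the integral $\int_x^{\theta x}V(y)y^{-2}\,dy\sim K(x)\log\theta$ by uniform convergence for slowly varying functions; this yields the sharp asymptotic $\mu[x,\theta x)\sim K(x)\log\theta$, not merely two-sided bounds. The geometric-partition converse is sound in structure (split off the finitely many blocks with large $\epsilon_k$, use uniform convergence of $K$ on each block, compare to a Karamata integral), but the displayed sandwich is not quite right as written: for a fixed $\theta>1$ the correct lower bound is on the order of $\frac{\log\theta}{\theta(\theta-1)}<1$, not $1$, and the upper bound works out to roughly $\frac{\theta^2\log\theta}{\theta-1}$ rather than $\theta(1+\log\theta)$. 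Neither error is fatal---both corrected bounds tend to $1$ as $\theta\downarrow1$, so the conclusion $\nu[0,x]\sim xK(x)$ follows exactly as you intend; you should simply state the bounds as functions of $\theta$ that converge to $1$, rather than claiming $1$ on the left for every fixed $\theta$. Also note the sum $\sum_k y_kK(y_k)\log\theta$ is not literally a Riemann sum for $\int K$ (that would carry the factor $y_k(\theta-1)$ rather than $y_k\log\theta$); the comparison still works, it just produces the $\log\theta/(\theta-1)$ factor which you must then remove in the limit $\theta\downarrow1$.
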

The proof of Theorem \ref{KasaharaTauberian2} can be found in \cite[Theorem 2.2]{Kasahara:Tailsof}.
  \begin{Thm}\label{alpha1}
	Let $m \in \cM_1$, $j$ be a Radon measure on $(0,\infty)$ and $K$ be a slowly varying function at $\infty$ such that $K$ and $1/K$ are locally bounded on $[0,\infty)$ and assume $(m,j)$ satisfies $\mathrm{(C)}$.
	Suppose the following conditions hold: 
	\begin{enumerate}
		\item $\lim_{\gamma \to \infty}\frac{m(\gamma x) - m(\gamma)}{K(\gamma)} = \log x$ for every $x > 0$,
		\item $\kappa := \int_{0}^{\infty}xj(dx) < \infty$.
	\end{enumerate} 
	Then if we take $\kappa = \int_{0}^{\infty}xj(dx)$ and $b_\gamma = -\int_{0}^{\gamma}j(dx)\int_{0}^{x}(m(y) - m(\gamma))dy$, we have
	\begin{align}
	\frac{1}{\gamma K(\gamma)}( \eta_{m,j}(\gamma t) - b_\gamma \gamma t )  \xrightarrow[\gamma \to \infty]{d} T(m^{(1)};\kappa t) \ \text{on} \ \bD. \label{eq146}
	\end{align}
	In addition to (i) and (ii), assume
	\begin{enumerate}
		\setcounter{enumi}{2}
		\item $j(x,\infty) \leq C x^{-1-\beta} \ (x \geq 1)$ for constants $C > 0$ and $\beta > 0$.
		\end{enumerate}
		Then we can replace $b_\gamma$ in \eqref{eq146} to $b^{\infty}_\gamma := - \int_{0}^{\infty}j(dx)\int_{0}^{x}(m(y) - m(\gamma))dy$ and the following holds:
		\begin{align}
		n_{m,j}[T_0 > s] &\sim \kappa s^{-1} K^{\sharp}(s)^{-1} \ (s \to \infty) \label{eq147}
		\end{align}
		where $K^{\sharp}$ is a de Bruijn conjugate of $K$.
	In addition to (i) and (ii), assume
	\begin{enumerate}
		\setcounter{enumi}{3}
		\item $\lim_{x \to \infty}m(x) = \infty$
		\item $\int_{1}^{\infty}G^1_m(x)j(dx) < \infty$
	\end{enumerate}
	Then we have 
	\begin{align}
	\frac{1}{\gamma m(\gamma)}\eta_{m,j} (\gamma) \xrightarrow[\gamma \to \infty]{P} \kappa. \label{eq134}
	\end{align}
\end{Thm}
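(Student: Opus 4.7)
My plan is to treat the three assertions successively, following the blueprint of Theorem \ref{alpha1-2} for the convergence in distribution, deducing the tail by a Tauberian argument for the second, and reading off the law of large numbers from the first. For \eqref{eq146} I rescale
\begin{align*}
m_\gamma(x) := \frac{m(\gamma x) - m(\gamma)}{K(\gamma)}, \qquad j_\gamma(dx) := \gamma\, j(\gamma\, dx),
\end{align*}
observing that subtracting a constant leaves the Stieltjes measure $dm$ unchanged, so the scaling identities of Section \ref{section: scalinglimit} yield
\begin{align*}
\frac{1}{\gamma K(\gamma)}\bigl(\eta_{m,j}(\gamma t) - b_\gamma \gamma t\bigr) \stackrel{d}{=} \eta_{m_\gamma, j_\gamma}(t) - \tilde b_\gamma t \quad \text{on } \bD,
\end{align*}
where $\tilde b_\gamma = -\int_0^1 G_{m_\gamma}\,dj_\gamma$. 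I would then invoke Theorem \ref{convtojump} with limit string $m^{(1)}(x) = \log x$: its hypothesis (i) is exactly assumption (i); (iii) and (iv) for $\{j_\gamma\}$ follow from (ii) by the change-of-variables estimates already used for Theorem \ref{alpha1-2}; and the centering shift $\tilde b_\gamma - b_\gamma \to 0$ is routine. Hypothesis (ii) of Theorem \ref{convtojump}, namely
\begin{align*}
\lim_{\delta \to 0}\limsup_{\gamma \to \infty}\frac{1}{\gamma K(\gamma)^2}\int_0^{\gamma\delta}(m(z) - m(\gamma))^2\,dz = 0,
\end{align*}
requires a careful split at $z=1$: on $(0,1)$ one uses $\int_{0+}m^2\,dy < \infty$ together with Potter bounds for $K$; on $(1,\gamma\delta)$ the substitution $z = \gamma u$ combined with assumption (i) and dominated convergence produces $\int_0^\delta(\log u)^2\,du$, which vanishes with $\delta$.

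For the tail statement \eqref{eq147}, the strengthened assumption (iii) yields $b_\gamma - b_\gamma^\infty = o(K(\gamma))$ by a direct bound on $\int_\gamma^\infty j(dx)\int_0^x(m(y)-m(\gamma))\,dy$, so that $b_\gamma^\infty$ may replace $b_\gamma$ in the conclusion of Part 1. Translating this convergence through Proposition \ref{continuityofLT} gives the small-$\lambda$ asymptotic
\begin{align*}
-\frac{d}{d\lambda}\bigl(\chi_{m,j}(\lambda) - b_\gamma^\infty\lambda\bigr) \sim \kappa\,\frac{K(1/\lambda)}{\lambda} \quad (\lambda \to +0),
\end{align*}
and Theorem \ref{KasaharaTauberian2} applied to the auxiliary measure $n_{m,j}[T_0 > s]\,ds$ (exactly as \eqref{eq110} was obtained in Theorem \ref{alpha1-2}) delivers \eqref{eq147} after a de Bruijn inversion.

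For the law of large numbers \eqref{eq134}, the decomposition
\begin{align*}
b_\gamma = -\int_0^\gamma G_m(x)\,j(dx) + m(\gamma)\int_0^\gamma x\,j(dx)
\end{align*}
combined with assumption (v), (ii), and the boundary condition (C) shows that the first integral converges to a finite constant, whereas the second tends to $\kappa$; together with (iv) this gives $b_\gamma \sim \kappa\, m(\gamma)$. Part 1 provides $\eta_{m,j}(\gamma) = b_\gamma \gamma + O_P(\gamma K(\gamma))$, and the de Haan relation in (i) forces $K(\gamma)/m(\gamma) \to 0$ once $m(\gamma) \to \infty$ (otherwise iterating $m(\gamma x) - m(\gamma) \sim K(\gamma)\log x$ would contradict the slow variation of $K$). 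Dividing by $\gamma m(\gamma)$ yields \eqref{eq134}.

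\textbf{Expected obstacle.} The most delicate step is the verification of hypothesis (ii) of Theorem \ref{convtojump} for $\{m_\gamma\}$, since the integrand $m_\gamma^2$ mixes the near-$0$ behaviour of $m$ (controlled by $m \in \cM_1$) with its behaviour near $\gamma$ (controlled by (i) through $K$), and only a careful split together with Potter-type bounds on $K$ seems to handle both regimes simultaneously. The only other subtlety I anticipate is the de Haan implication $K(\gamma)/m(\gamma) \to 0$ needed in Part 3, which has to be extracted from (i) and (iv) alone.
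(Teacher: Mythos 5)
Your handling of the convergence \eqref{eq146} and of the law of large numbers \eqref{eq134} follows the paper's route; the split at $z=1$ you propose to verify hypothesis (ii) of Theorem \ref{convtojump} is a sound reading of the paper's bare Potter bound, and your decomposition of $b_\gamma$ in Part 3 is exactly the paper's. The genuine gap is in the tail statement \eqref{eq147}. The intermediate asymptotic you write,
\begin{align*}
-\frac{d}{d\lambda}\bigl(\chi_{m,j}(\lambda) - b_\gamma^\infty\lambda\bigr) \sim \kappa\,\frac{K(1/\lambda)}{\lambda} \quad (\lambda \to +0),
\end{align*}
does not parse: $b^\infty_\gamma$ depends on $\gamma$, whereas the left side is supposed to be an asymptotic in $\lambda$ alone, and moreover the right side should carry $K^\sharp$, not $K$. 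More substantively, your sketch leans on the phrase ``exactly as \eqref{eq110} was obtained in Theorem \ref{alpha1-2},'' but the $\alpha\in(1,2)$ case has a \emph{fixed} centering constant $b$, so $u(\lambda)=\chi_{m,j}(\lambda)-b\lambda$ is a single function with a clean regular-variation asymptotic; here the centering $b^\infty_\gamma=-\int_0^\infty G_m\,dj+\kappa\, m(\gamma)$ genuinely drifts with $\gamma$, and no fixed $b$ works.

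The paper resolves the drifting centering by setting $\Theta(\gamma):=\gamma\,\chi_{m,j}(1/\gamma)$ — a $\gamma$-free function — and deriving from \eqref{eq146} together with assumption (i) that $\frac{\Theta(\gamma s K(\gamma s))-\Theta(\gamma K(\gamma))}{K(\gamma)}\to\kappa\log s$, where the $\gamma$-dependent part of $b^\infty_\gamma$ is cancelled precisely by $\frac{m(\gamma s)-m(\gamma)}{K(\gamma)}\to\log s$. A de Bruijn change of variable then gives $K^\sharp(\gamma)\bigl(\Theta(\gamma s)-\Theta(\gamma)\bigr)\to\kappa\log s$; only at this stage does the concavity $\Theta''\le 0$ enter, via the monotone density theorem, to yield $\Theta'(\gamma)\sim\kappa/(\gamma K^\sharp(\gamma))$ and hence $-\hat\nu'(\lambda)\sim\kappa/(\lambda K^\sharp(1/\lambda))$, after which Theorem \ref{KasaharaTauberian2} applies. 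Your sketch treats the de Bruijn conjugation as cosmetic post-processing, but it is the mechanism by which the $\gamma$-dependent centering is eliminated; without the $\Theta$-manoeuvre, there is no well-defined Laplace-transform asymptotic for the Tauberian theorem to act on.
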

\begin{proof}
	Define
	\begin{align}
	m_\gamma(x) = \frac{m(\gamma x) - m(\gamma)}{K(\gamma)},\  j_\gamma(dx) = \gamma j(d(\gamma x)), \  \tilde{b}_\gamma = -\int_{0}^{1}G_{m_\gamma}j_\gamma (dx). \label{}
	\end{align}
	Then  we have
	\begin{align}
	\frac{1}{\gamma K(\gamma)}(\eta_{m,j}(\gamma t) - b_\gamma \gamma t)
	\overset{d}{=} \eta_{m_\gamma,j_\gamma}(t) - \tilde{b}_\gamma t \ \text{on} \ \bD \label{}
	\end{align}
	Then we show that $\{ m_\gamma \}_\gamma$ and $\{ j_\gamma \}_\gamma$ satisfy the assumptions of Theorem \ref{convtojump} with $m = m^{(1)}$.
	It is easily checked that the assumptions (i) and (iii) of Theorem \ref{convtojump} hold. Moreover, the assumption (iv) of Theorem \ref{convtojump} can be checked by the same way as Theorem \ref{alpha1-2}. Then in order to show \eqref{eq146}, it is enough to show that the following holds:
	\begin{align}
	\lim_{a \to +0}\limsup_{\gamma \to \infty}\int_{0}^{a}m_\gamma(x)^2dx = 0. \label{eq80}
	\end{align}
	From Potter's theorem\cite[Theorem 3.8.6]{Regularvariation}, for every $\epsilon > 0$, there exists a constant $C_\epsilon > 0$ such that the following hold:
	\begin{align}
	\frac{m(\gamma x) - m(\gamma)}{K(\gamma )} \leq C_\epsilon \max \{ x^{\epsilon}, x^{-\epsilon} \} \ \text{for every}\ x > 0 \ \text{and} \ \gamma > 0. \label{eq83}
	\end{align} 
	From \eqref{eq83}, we easily obtain \eqref{eq80} and therefore \eqref{eq146} holds.
	We assume the assumption (iii) holds.
	Then from \eqref{eq83}, it is not difficult to see that for constants $C' > 0$ and $\beta' \in (0,\beta)$, 
	\begin{align}
	\frac{1}{K(\gamma)}\int_{\gamma}^{\infty}j(dx)\int_{0}^{x}(m(y) - m(\gamma))dy
	\leq C' \int_{\gamma}^{\infty}x^{1 + \beta'}j(dx) \xrightarrow{\gamma \to \infty} 0, \label{}
	\end{align}
	and therefore $(b_\gamma - b^\infty_\gamma)/ K(\gamma) \xrightarrow{\gamma \to \infty} 0$. Next we show \eqref{eq147}
	Define
	\begin{align}
	\nu(ds) = n_{m,j}[T_0 > s]ds \quad \text{and} \quad \hat{\nu}(\lambda) = \int_{0}^{\infty}\mathrm{e}^{-\lambda s}\nu(ds) = \frac{\chi_{m,j}(\lambda)}{\lambda}. \label{eq141}
	\end{align}	
	If we can show 
	\begin{align}
	-\hat{\nu}'(\lambda) \sim \frac{\kappa}{\lambda K^{\sharp}(1/\lambda)} \ (\lambda \to +0 ), \label{eq137}
	\end{align}
	we have from \eqref{eq141} and Theorem \ref{KasaharaTauberian2}, for every $s > 0$,
	\begin{align}
	K^{\sharp}(\gamma)\int_{\gamma}^{\gamma s}n_{m,j}[T_0 > u]du \sim \kappa \log s \ (\gamma \to \infty). \label{}
	\end{align}
	Then from the monotone density theorem, the desired result is shown. Hence it is enough to show \eqref{eq137}. 
	From \eqref{eq146}, we have
	\begin{align}
	\gamma \left(\chi_{m,j}\left(\frac{1 }{\gamma K(\gamma)}\right) - \frac{b^\infty_\gamma  }{\gamma K(\gamma)}\right) \xrightarrow{\gamma \to \infty} C. \label{eq138}
	\end{align}
	where $C = \kappa H_{m^{(1)}}(1)$.
	Define $\varTheta(\gamma) = \gamma\chi_{m,j}(1/\gamma)$. Then from \eqref{eq138} and the assumption (i), it holds that for every $s > 0$,
	\begin{align}
	\frac{\varTheta(\gamma sK(\gamma s)) - \varTheta (\gamma K(\gamma))}{K(\gamma)} \xrightarrow{\gamma \to \infty} \kappa \log s. \label{}
	\end{align}
	Hence we obtain for every $s > 0$, 
	\begin{align}
	K^{\sharp}(\gamma)(\varTheta(\gamma s) - \varTheta(\gamma)) \xrightarrow{\gamma \to \infty} \kappa \log s, \label{}
	\end{align}
	Since we have
	\begin{align}
	\varTheta''(\gamma) &= \frac{\chi_{m,j}''(1/\gamma)}{\gamma^3}  = -\frac{1}{\gamma^3}\int_{0}^{\infty}\bP^m_x[T^2_0\mathrm{e}^{-T_0/\gamma}]j(dx) \leq 0, \label{}
	\end{align}
	it holds that from the monotone density theorem,
	\begin{align}
	\varTheta'(\gamma) \sim \frac{\kappa}{\gamma K^{\sharp}(\gamma)} \ ( \gamma \to \infty). \label{}
	\end{align}
	Then we obtain
	\begin{align}
	-\hat{\nu}'(\lambda) = \frac{\varTheta'(1/\lambda)}{\lambda^2} 
	\sim  \frac{\kappa}{\lambda K^{\sharp}(1/\lambda)} \ (\gamma \to \infty), \label{}
	\end{align}
	Next we show \eqref{eq134}.
	Since $\lim_{x \to \infty}m(x) = \infty$, it holds that
	\begin{align}
	\lim_{\gamma \to \infty}\frac{b_\gamma}{m(\gamma)} = \kappa. \label{eq133}
	\end{align} 
	Then from Theorem 3.7.4 in \cite{Regularvariation}, we have $\lim_{\gamma \to \infty}m(\gamma) / K(\gamma) = \infty$ and therefore from \eqref{eq146},
	we obtain \eqref{eq134}	and the proof is complete. 
\end{proof}

\begin{Thm}\label{flucofA alpha1}
	Let $m_+,m_- \in \cM_1$, $j_+,j_-$ be Radon measures on $(0,\infty)$ and $K$ be a slowly varying function at $\infty$ such that $K$ and $1/K$ are locally bounded on $[0,\infty)$. Assume $(m_+,j_+)$ and $(m_-,j_-)$ satisfy (C).
	Suppose the following conditions hold: 
	\begin{enumerate}
		\item $\lim_{x \to \infty}\frac{m_\pm(\gamma x) - m_\pm(\gamma)}{K(\gamma)} = w_\pm\log x \ ( x > 0)$ for constants $w_\pm > 0$,
		\item $j_\pm(x,\infty) \leq Cx^{-1 - \beta} \ (x \geq 1)$ for constants $C>0$ and $\beta > 0$.
	\end{enumerate} 
	Then we have $\lim_{\gamma \to \infty}p(\gamma) = p$ and
	\begin{align}
		g(\gamma)\left(\frac{A(q(\gamma) t)}{\gamma} - \frac{p(\gamma)q(\gamma)}{\gamma
		} t\right) \xrightarrow[\gamma \to \infty ]{f.d.} (1-p)w_+T(m^{(1)};\tilde{\kappa}_+t) - pw_-\tilde{T}(m^{(1)};\tilde{\kappa}_-t).  	 \label{}
\end{align}	where $T(m^{(1)};t)$ and $\tilde{T}(m^{(1)};t)$ are i.i.d. and
	\begin{align}
	&b_\pm(\gamma) = -\int_{0}^{\infty}j_\pm(dx)\int_{0}^{x}({m_\pm}(y) - m_\pm(\gamma))dy,\  p(\gamma) = \frac{b_+(\gamma)}{b_+(\gamma) + b_-(\gamma)},\label{} \\  
	&\kappa_\pm = \int_{0}^{\infty}xj_\pm(dx), \ g(\gamma) = \frac{1}{ K(\gamma)}, \label{}
	\end{align}
	and when $\lim_{x \to \infty}m_\pm(x) < \infty$,
	\begin{align}
		b_\pm = \int_{0}^{\infty}j_\pm(dx)\int_{0}^{x}(m_\pm(\infty) - m_\pm(y))dy, \ 	\tilde{\kappa}_\pm = \frac{\kappa_\pm}{b_+ + b_-}, \  p = \frac{b_+}{b_+ + b_-},\ q(\gamma) = \gamma, \label{}
	\end{align}
	and when $\lim_{x \to \infty}m_\pm(x) = \infty$, 
	\begin{align}
		&a_\pm = \frac{\kappa_\pm w_\pm}{w_+ + w_-}, \  \tilde{\kappa}_\pm = \frac{\kappa_\pm}{a_+ + a_-}, \ 
		p = \frac{a_+}{a_+ + a_-}, \ q(\gamma) = \gamma (m_+(\gamma) + m_-(\gamma)). \label{}
\end{align}
\end{Thm}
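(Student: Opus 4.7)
The plan is to adapt the three-step argument of the proof of Theorem \ref{flucofA alpha1-2} to the $\alpha=1$ regime, with an extra layer of bookkeeping because the centering coefficient $p(\gamma)$ (and, in the unbounded case, the time change $q(\gamma)$) now depends on $\gamma$. As in that proof, I would first couple the bilateral process to two independent excursion Poisson random measures, write $\eta(t) := \eta_{m_+,j_+}(t) + \eta_{m_-,j_-}(t)$ whose right-continuous inverse is the local time $\ell(t)$ at $0$, and use the sandwich inequality
\begin{align}
|(A(t) - p(\gamma)t) - ((1-p(\gamma))\eta_{m_+,j_+}(\ell(t)) - p(\gamma)\eta_{m_-,j_-}(\ell(t)))| \leq \Delta\eta(\ell(t)),
\end{align}
which is verified exactly as in Step 1 of Theorem \ref{flucofA alpha1-2}; note that the algebraic identity $p(\gamma)q(\gamma) = b_+(\gamma) + b_-(\gamma)$ is what reconciles the $\gamma$-dependent centering with the desired limiting drift $p$.

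Second, I would show that $g(\gamma)\Delta\eta(\ell(q(\gamma)t)) \to 0$ in probability for fixed $t$. Following Step 2 of Theorem \ref{flucofA alpha1-2}, the idea is to localize $\ell(q(\gamma)t)$ in a narrow window around its mean using the law of large numbers for $\eta$, and then estimate the maximum excursion length in that window via the tail asymptotic \eqref{eq147} furnished by Theorem \ref{alpha1}. Hypothesis (ii), $j_\pm(x,\infty) \leq Cx^{-1-\beta}$, is precisely what makes \eqref{eq147} applicable on both sides.

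Third, the convergence reduces to the finite-dimensional limit of $g(\gamma)\tilde\eta(\ell(q(\gamma)t))$ where $\tilde\eta(s) = (1-p(\gamma))\eta_{m_+,j_+}(s) - p(\gamma)\eta_{m_-,j_-}(s)$. By the joint $J_1$-convergence of independently normalized inverse local times supplied by Theorem \ref{alpha1}, the pair $(g(\gamma)\tilde\eta(q(\gamma)\cdot), \eta(q(\gamma)\cdot)/q(\gamma))$ converges on $\bD \times \bD$ to the pair consisting of $(1-p)w_+T(m^{(1)};\kappa_+ \cdot) - pw_-\tilde T(m^{(1)};\kappa_- \cdot)$ and a deterministic linear function. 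Inverting the second coordinate to get $\ell(q(\gamma)\cdot)/q(\gamma)$ converging to a deterministic line, and then composing as in Step 3 of Theorem \ref{flucofA alpha1-2} using Whitt's theory, one gets $M_1$-convergence of the composed process, which since the limit has no fixed discontinuities implies finite-dimensional convergence.

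The main obstacle is handling the two sub-cases uniformly. When $m_\pm(\infty) < \infty$, the picture is parallel to Theorem \ref{flucofA alpha1-2}: $b_\pm$ are finite constants, $p(\gamma) \to p$ with the natural time-change $q(\gamma) = \gamma$, and $\eta(\gamma)/\gamma \to b_+ + b_-$. When $m_\pm(\infty) = \infty$, the centerings diverge, $b_\pm(\gamma) \sim \kappa_\pm m_\pm(\gamma)$ by \eqref{eq133}, forcing the time-change $q(\gamma) = \gamma(m_+(\gamma) + m_-(\gamma))$; here I would use hypothesis (i), which through the slow variation of $K$ implies $m_\pm(\gamma)/(m_+(\gamma) + m_-(\gamma)) \to w_\pm/(w_+ + w_-)$, to deduce $p(\gamma) \to a_+/(a_+ + a_-) = p$, and the LLN \eqref{eq134} applied on both sides to obtain the requisite convergence of the rescaled local time. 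The most delicate point is ensuring that the composition-with-inverse argument goes through with a slowly varying time-change and shifting centerings; controlling the error in $p(\gamma)$ relative to the stable-scale fluctuation rate $g(\gamma) = 1/K(\gamma)$, so that the replacement of $p(\gamma)$ by $p$ in the limit contributes only a negligible drift, is the key technical verification.
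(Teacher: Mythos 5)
Your overall framework is the correct one and it coincides with the paper's: sandwich the centered occupation time between the differenced inverse local times evaluated at the local time $\ell$, kill the maximal-jump remainder by the tail estimate \eqref{eq147} plus the law of large numbers for $\eta$, and conclude by joint $J_1$-convergence followed by composition with the inverse, passing through $M_1$ to finite-dimensional convergence. You also correctly isolate the two subcases and the role of de Haan's theorem in the unbounded one.

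There are, however, two concrete issues. First, the asserted ``algebraic identity $p(\gamma)q(\gamma)=b_+(\gamma)+b_-(\gamma)$'' is false: $q(\gamma)=\gamma(m_+(\gamma)+m_-(\gamma))$ while $b_\pm(\gamma)\sim\kappa_\pm m_\pm(\gamma)$, so the two sides are not even of the same order in $\gamma$. The identity that actually powers the cancellation is $(1-p(\gamma))\,b_+(\gamma)-p(\gamma)\,b_-(\gamma)=0$, which holds by the very definition $p(\gamma)=b_+(\gamma)/(b_+(\gamma)+b_-(\gamma))$. Writing $\eta_\pm(\ell)=\bigl(\eta_\pm(\ell)-b_\pm(\gamma)\ell\bigr)+b_\pm(\gamma)\ell$, this exact cancellation removes the drift from $(1-p(\gamma))\eta_+(\ell)-p(\gamma)\eta_-(\ell)$ and leaves only the centered fluctuations, to which Theorem \ref{alpha1} applies.

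Second, the final ``key technical verification'' you flag — controlling $p(\gamma)-p$ against the fluctuation scale so that replacing $p(\gamma)$ by $p$ contributes only negligible drift — is not actually needed if you set the argument up as above. The centering in the statement already uses $p(\gamma)$, so the sandwich and the drift cancellation can (and should) be carried out with $p(\gamma)$ throughout. The limit $p(\gamma)\to p$ (proved via de Haan's theorem and \eqref{eq134}) is then only used to pass to the limit in the \emph{bounded} multiplicative coefficients $(1-p(\gamma))$ and $p(\gamma)$ multiplying the already-scaled, tight fluctuation processes; there is no residual drift term to control, and no rate of convergence for $p(\gamma)-p$ is required. If one instead tried to replace $p(\gamma)$ by $p$ inside the centering, one would indeed need $(p-p(\gamma))\cdot q(\gamma)/(\gamma K(\gamma))\to 0$, which is not available — and this is precisely why the statement is formulated with the $\gamma$-dependent centering $p(\gamma)q(\gamma)t/\gamma$ in the first place.
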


\begin{proof}
	In the positive recurrent case, that is, when $\lim_{x \to \infty}m_\pm(x) = \infty$ holds, we can prove almost the same argument as Theorem \ref{flucofA alpha1-2}. Hence we only show the case $\lim_{x \to \infty}m_\pm(x) = \infty$.
	
	From \eqref{eq158} we have
	\begin{align}
	&|(A_+(q(\gamma) t) - p(\gamma)q(\gamma) t) - ((1-p(\gamma))\eta_{m_+,j_+}(\ell(q(\gamma) t)) - p(\gamma) \eta_{m_-,j_-}(\ell(q(\gamma) t) ))| \label{} \\
	\leq &\Delta\eta(\ell(q(\gamma) t)). \label{}
	\end{align}
	Then from the same argument in Theorem \ref{flucofA alpha1-2}, it is enough to show the following:
	\begin{align}
		&\lim_{\delta' \to +0}\limsup_{\gamma \to \infty}\bP\left[\sup_{s \in [c_-(\delta')\gamma t, c_+(\delta')\gamma t]}\Delta\eta(s) > \gamma K(\gamma)\epsilon\right] = 0 \ \text{for every} \ \epsilon > 0, \label{eq149} \\
		&\left(g(\gamma)\tilde{\eta}(\gamma t), \frac{1}{q(\gamma)}\eta(\gamma t)\right) \nonumber \\ 
		\xrightarrow[\gamma \to \infty]{d}& ((1-p)w_+T(m^{(1)};\kappa_+t) - pw_-\tilde{T}(m^{(1)};\kappa_-t), (a_+ + a_-)t) \ \text{on} \ \bD \ \text{in} \ J_1\text{-topology}. \label{eq151} \\
		&\lim_{\gamma \to \infty}p(\gamma) = p, \label{eq152} \\
		&\lim_{\gamma \to \infty}\bP[\ell(q(\gamma)t) \not\in [c_-(\delta)\gamma t, c_+(\delta)\gamma t]] = 0 \ \text{for every} \ \delta > 0, \label{eq148}
	\end{align}
	where $\tilde{\eta}(t) = (1 - p)\eta_{m_+,j_+}(t) - p \eta_{m_-,j_-}(t)$ and $c_\pm(\delta) = \frac{1 \pm \delta}{a_+ + a_-}$.
	From Theorem \ref{alpha1}, we easily obtain \eqref{eq149} and \eqref{eq151}.
	Since we have the assumptions (i) and (ii), it follows from de Haan's theorem \cite[Theorem 3.7.3]{Regularvariation},
	\begin{align}
		\lim_{\gamma \to \infty}\frac{m_+(\gamma)}{m_-(\gamma)} = \frac{w_+}{w_-}. \label{}
	\end{align}
	Moreover, from \eqref{eq134}, it holds that $\lim_{\gamma \to \infty}b_\pm(\gamma)/m_\pm(\gamma) = \kappa_\pm$. Hence we obtain \eqref{eq152}.
	We prove \eqref{eq148}.
	Note that
	\begin{align}
	P[\ell(q(\gamma) t) \not\in [c_-(\delta)\gamma t, c_+(\delta)\gamma t]]
	&\leq P[\eta(c_-(\delta)\gamma t) > q(\gamma) t] + P[\eta(c_+(\delta)\gamma t) \leq q(\gamma) t ]. \label{eq150} 
	\end{align}
	Then from Theorem \ref{alpha1}, we have $\lim_{\gamma \to \infty}\eta(\gamma)/q(\gamma) = \frac{w_+\kappa_+ + w_-\kappa_-}{w_+ + w_-}$
	and therefore $\eqref{eq150} \to 0 \ (\gamma \to \infty)$.
\end{proof}

\appendix

\section{Appendix: Continuity theorem for Laplace transforms of spectrally positive L\'evy processes}\label{appendix: contthmofLT}
	\begin{Prop}\label{continuityofLT}
		Let $X_n,X$ be real-valued random variables with infinitely divisible laws such that their L\'evy measures are supported on $(0,\infty)$.
		Then $X_n \xrightarrow[n \to \infty]{d} X$ holds if and only if $\lim_{n \to \infty} E[\mathrm{e}^{-\lambda X_n}] = E[\mathrm{e}^{-\lambda X}]$ holds for every $\lambda > 0$.
	\end{Prop}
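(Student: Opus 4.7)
The plan is to prove the two directions separately, using the classical Lévy continuity theorem in one and analytic continuation plus the Lévy–Khintchine machinery in the other.

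For the direction $(\Rightarrow)$, this is subtle because for $\lambda>0$ the function $x\mapsto \mathrm{e}^{-\lambda x}$ is continuous but unbounded below, so convergence in distribution alone does not transfer. My approach would be to invoke the Skorokhod representation to realize $X_n\to X$ almost surely on a common probability space; Fatou's lemma then gives the $\liminf$ inequality $\liminf_n E[\mathrm{e}^{-\lambda X_n}]\ge E[\mathrm{e}^{-\lambda X}]$. For the converse it suffices to establish uniform integrability of $\{\mathrm{e}^{-\lambda X_n}\}_n$, which in turn follows from $\sup_n E[\mathrm{e}^{-\lambda' X_n}]<\infty$ for some $\lambda'>\lambda$. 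To obtain this uniform bound, I would call on the classical continuity theorem for infinitely divisible distributions: $X_n\to X$ in law is equivalent to convergence of the Lévy–Khintchine triples $(b_n,\sigma_n^2,\nu_n)\to(b,\sigma^2,\nu)$ in the standard weak sense, and because all Lévy measures are supported on $(0,\infty)$ this translates into pointwise convergence of the Laplace exponents $\psi_n(\lambda')\to\psi(\lambda')<\infty$, giving the required bound.

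For the direction $(\Leftarrow)$, set $\varphi_n(z)=E[\mathrm{e}^{-zX_n}]$, which is analytic on the open right half plane $\Pi=\{\Re z>0\}$ (dominated convergence) and satisfies the elementary bound $|\varphi_n(z)|\le\varphi_n(\Re z)$. The hypothesis forces $\{\varphi_n(\lambda)\}_n$ to be bounded for each $\lambda>0$, hence $\{\varphi_n\}$ is locally uniformly bounded on $\Pi$. By Montel's theorem the family is normal, and any subsequential limit agrees with $\varphi$ on the positive real axis, hence on all of $\Pi$ by uniqueness of analytic continuation; consequently $\varphi_n\to\varphi$ locally uniformly on $\Pi$. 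To pass from $\Pi$ to the imaginary boundary (whose values are the characteristic functions) and then invoke the classical Lévy continuity theorem, I would use the Lévy–Khintchine representation
\[
\psi_n(\lambda)=-b_n\lambda+\tfrac{\sigma_n^2}{2}\lambda^2+\int_{(0,\infty)}\bigl(\mathrm{e}^{-\lambda x}-1+\lambda x\mathbf{1}_{\{x\le 1\}}\bigr)\nu_n(dx),
\]
and argue that pointwise convergence of $\psi_n=\log\varphi_n$ on $(0,\infty)$ implies convergence of the triples $(b_n,\sigma_n^2,\nu_n)\to(b,\sigma^2,\nu)$, which in turn gives pointwise convergence of the characteristic exponents $\eta_n(t)=\psi_n(-it)$ on $\mathbb{R}$.

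The main obstacle is this last step: extracting convergence of the Lévy–Khintchine triples from pointwise convergence of the Laplace exponents on $(0,\infty)$. One has to (a) prove tightness of $\{\nu_n\}$ away from $0$ using bounds of the form $\nu_n(\delta,\infty)\le C_\delta\cdot\psi_n(\lambda_0)$, (b) control the small-jump contribution uniformly, and (c) separate the Gaussian coefficient from the jump part via the asymptotics $\sigma^2/2=\lim_{\lambda\to\infty}\psi(\lambda)/\lambda^2$ after an appropriate compensation. These steps are classical but nontrivial; spectral positivity simplifies the compensator but does not trivialize the argument. An alternative, shorter route is to cite the Laplace-exponent version of the continuity theorem for infinitely divisible laws directly (see, e.g., Bertoin's monograph on Lévy processes or Sato's book), in which case the proof reduces to a few lines invoking Montel and this standard result.
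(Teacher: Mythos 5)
Your argument for the direction ``convergence in law $\Rightarrow$ Laplace transform convergence'' works, but it is circuitous. Once you invoke the classical equivalence of $X_n \xrightarrow{d} X$ with convergence of the L\'evy--Khintchine data $(c_n,\tilde\nu_n) \to (c,\tilde\nu)$ (where $\tilde\nu_n(dx) = a_n\delta_0 + \tfrac{x^2}{x^2+1}\nu_n(dx)$), you can observe directly that the Laplace exponent has the form
\begin{align}
\chi_n(\lambda) = c_n\lambda - \int_{[0,\infty)}\Bigl(\mathrm{e}^{-\lambda x}-1+\tfrac{\lambda x}{x^2+1}\Bigr)\tfrac{x^2+1}{x^2}\,\tilde\nu_n(dx),
\end{align}
in which the integrand is a bounded continuous function of $x$ on $[0,\infty)$; weak convergence of $\tilde\nu_n$ then gives $\chi_n(\lambda)\to\chi(\lambda)$ for every $\lambda>0$ immediately. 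Skorokhod representation, Fatou, and uniform integrability are all superfluous here. This is effectively what the cited appendix of Kasahara--Watanabe does.

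The genuine gap in your proposal is in the converse direction, which is the nontrivial part of the proposition. Your Montel argument establishes local uniform convergence of $\varphi_n(z)=E[\mathrm{e}^{-zX_n}]$ on the open half plane $\{\Re z>0\}$, but that says nothing about behaviour on the boundary $i\mathbb{R}$, which is precisely where the characteristic functions live. You acknowledge this and then outline, without carrying out, a program (tightness of $\nu_n$ away from $0$, small-jump control, separation of the Gaussian coefficient) for upgrading Laplace-exponent convergence on $(0,\infty)$ to triple convergence, and finally suggest citing Bertoin or Sato. As written, the key step is absent. The paper's proof fills exactly this gap by a concrete device that bypasses complex analysis entirely: it forms the second difference
\begin{align}
\chi_n(\lambda+2)-2\chi_n(\lambda+1)+\chi_n(\lambda) = -\int_{[0,\infty)}\mathrm{e}^{-\lambda x}\,\rho_n(dx), \quad \rho_n(dx)=(\mathrm{e}^{-x}-1)^2\,\tfrac{x^2+1}{x^2}\,\tilde\nu_n(dx),
\end{align}
where $\rho_n$ is a \emph{finite} measure. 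Since $\chi_n(\lambda)\to\chi(\lambda)$ for all $\lambda>0$ and $\chi_n(0)=0$, the Laplace transforms $\hat\rho_n(\lambda)$ converge to $\hat\rho(\lambda)$ for all $\lambda\ge0$, so the elementary continuity theorem for Laplace transforms of finite measures yields $\rho_n\to\rho$ weakly. The density $(\mathrm{e}^{-x}-1)^2\tfrac{x^2+1}{x^2}$ is bounded and bounded away from zero on $[0,\infty)$, so $\tilde\nu_n\to\tilde\nu$ weakly, and $c_n\to c$ then follows from the expression for $\chi_n$. This reduces what you flag as ``classical but nontrivial'' to a few lines of real-variable analysis; your proposal does not achieve that reduction and would need to either supply the full triple-extraction argument or rest entirely on an external citation.
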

	\begin{proof}
		The proof of the direct assertion can be found in \cite[Appendix]{KasaharaWatanabe:Remarkson}. Hence we prove only the inverse assertion. Since $X_n,X$ have infinite divisible laws and their L\'evy measures are supported on $(0,\infty)$, their characteristic exponent $\psi_n,\psi$ are represented as
		\begin{align}
			\psi_n(\xi) &= \log E[\mathrm{e}^{i \xi X_n}] = i c_n \xi -\frac{1}{2}a_n\xi^2 + \int_{0}^{\infty}\left(\mathrm{e}^{i\xi x} - 1 -\frac{i\xi x}{x^2 + 1}\right)\nu_n(dx), \label{} \\
			\psi(\xi) &= \log E[\mathrm{e}^{i \xi X}] = i c \xi -\frac{1}{2}a\xi^2 + \int_{0}^{\infty}\left(\mathrm{e}^{i\xi x} - 1 -\frac{i\xi x}{x^2 + 1}\right)\nu(dx). \label{}
		\end{align}
		for constants $\xi \in \bR$, $c_n, c \in \bR$, $a_n,a \geq 0$ and measures $\nu_n$ and $\nu$.  Here $\nu_n$ and $\nu$ are L\'evy measures of $X_n$ and $X$. It is well-known that $X_n \xrightarrow[n \to \infty]{d} X$ is equivalent to hold the following (see e.g. \cite{Kallenberg:Foundationsof}):
		\begin{align}
			\lim_{n \to \infty}c_n &= c, \label{eq79} \\
			\lim_{n \to \infty}\int_{[0,\infty)}f(x) \tilde{\nu_n}(dx) &= \int_{[0,\infty)}f(x) \tilde{\nu}(dx) \ (\forall f \in C_b(\bR_+)) \label{eq77}  
		\end{align}
		where $C_b(\bR_+)$ denotes a space of bounded continuous functions on $[0,\infty)$ and
		\begin{align}
			\tilde{\nu_n}(dx) = a_n \delta_0(dx) + \frac{x^2}{x^2 + 1}\nu_n(dx)
		\end{align}		
		and $\tilde{\nu}$ is defined similarly. We note that the Laplace exponent $\chi_n(\lambda)$ of $X_n$ is 
		\begin{align}
			\chi_n(\lambda) = -\psi_n(i\lambda) = c_n\lambda - \int_{[0,\infty)}\left( \mathrm{e}^{-\lambda x} -1  + \frac{\lambda x}{x^2 +1} \right)\frac{x^2 + 1}{x^2}\tilde{\nu_n}(dx)\ (\lambda \geq 0). \label{eq78}
		\end{align}
		Then we have
		\begin{align}
			\chi_n(\lambda + 2) - 2\chi_n(\lambda + 1) + \chi_n(\lambda)
			= -\int_{[0,\infty)}\mathrm{e}^{-\lambda x}(\mathrm{e}^{-x} - 1)^2\frac{x^2 + 1}{x^2}\tilde{\nu_n}(dx). \label{} 				
		\end{align}
		From the assumption, it holds that $\lim_{n \to \infty}\chi_n(\lambda) = \chi(\lambda)$ for every $\lambda > 0$. Then we have
		\begin{align}
			\lim_{n \to \infty}\int_{[0,\infty)}\mathrm{e}^{-\lambda x}(\mathrm{e}^{-x} - 1)^2\frac{x^2 + 1}{x^2}\tilde{\nu_n}(dx) = \int_{[0,\infty)}\mathrm{e}^{-\lambda x}(\mathrm{e}^{-x} - 1)^2\frac{x^2 + 1}{x^2}\tilde{\nu}(dx). \label{}
		\end{align} 
		Hence by the continuity theorem for Laplace transform, we obtain
		\begin{align}
			\lim_{n \to \infty}\int_{[0,\infty)}f(x)(\mathrm{e}^{-x} - 1)^2\frac{x^2 + 1}{x^2}\tilde{\nu_n}(dx) = \int_{[0,\infty)}f(x)(\mathrm{e}^{-x} - 1)^2\frac{x^2 + 1}{x^2}\tilde{\nu}(dx) \label{eq156}
		\end{align}
		for every $f \in C_b(\bR_+)$. Since the function $(\mathrm{e}^{-x} - 1)^2\frac{x^2 + 1}{x^2}$ is bounded and the infimum is greater than $0$ on $[0,\infty)$, we obtain \eqref{eq77}. Then from \eqref{eq78} and \eqref{eq156}, we can easily deduce \eqref{eq79}, and the proof is complete.
	\end{proof}

\section{Appendix: Convergence of occupation times to non-degenerate distributions}\label{appendix: non-degenerate case}
 Here we treat the case $\frac{1}{t}A(t)$ converges in law to non-degenerate distribution.
 As we mentioned in Section \ref{section: intro}, we can apply the methods used in \cite{Watanabe:Arcsinelaw}, which are double Laplace transforms and Williams formula.
 
 The following proposition called Williams formula can be proved by almost the same way in \cite{IkedaWatanabe:Stochastic}. Though it is shown only for the Brownian motion in \cite{IkedaWatanabe:Stochastic}, the proof is essentially due to the property of the process that it does not jump from the positive (negative) side to the negative (positive) side without visiting the origin, and therefore we can extend the result to our situation. 
 \begin{Prop}[Williams formula]\label{Williamsformula}
 	Let $m_+,m_- \in \cM_1$ and $j_+,j_-$ be Radon measures on $(0,\infty)$ and suppose $(m_+,j_+)$ and $(m_-,j_-)$ satisfy (C). Then the following hold:
 	\begin{align}
 	A^{-1}(t) = t + \eta_{m_-,j_-}(\eta_{m_+,j_+}^{-1}(t)), \label{}
 	\end{align}
 	Here $A^{-1}(t)$ and $\eta_{m_+,j_+}^{-1}(t)$ are the right-continuous inverse processes of $A(t)$ and $\eta_{m_+,j_+}(t)$, respectively.
 \end{Prop}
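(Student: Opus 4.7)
The plan is to leverage the It\^o excursion decomposition underpinning the construction of $X := X_{m_+,j_+;m_-,j_-}$. Writing $N = N_{m_+,j_+} + N_{m_-,j_-}$ for the driving Poisson point process and $\eta := \eta_{m_+,j_+} + \eta_{m_-,j_-}$ for the inverse local time at $0$, each atom of $N$ carries an excursion whose sign is preserved throughout its lifetime. Thus the excursions of $X$ partition unambiguously into positive ones (counted by $\eta_{m_+,j_+}$) and negative ones (counted by $\eta_{m_-,j_-}$); the process never crosses the origin without depositing local time there, and almost surely $\eta_{m_+,j_+}$ and $\eta_{m_-,j_-}$ share no common jump times.

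First I would establish the pointwise identity $A(\eta(s)) = \eta_{m_+,j_+}(s)$ for every $s \geq 0$: up to real time $\eta(s)$, every excursion with local-time label $\leq s$ has terminated (by right-continuity of $\eta$), so $A(\eta(s))$ equals the total length of positive excursions with label in $[0,s]$, which is precisely $\eta_{m_+,j_+}(s)$ (the zero-set $\{r : X(r) = 0\}$ is Lebesgue-negligible and contributes nothing). Second, I would take right-continuous inverses. Fix $t \geq 0$ and set $s := \eta_{m_+,j_+}^{-1}(t)$. If $t$ lies in a flat stretch of $\eta_{m_+,j_+}$, then $s$ is its right endpoint, only negative excursions occur between local times $\eta_{m_+,j_+}^{-1}(t-)$ and $s$, and $A$ remains equal to $t$ on $[\eta(\eta_{m_+,j_+}^{-1}(t-)), \eta(s-)]$ before strictly exceeding $t$ on the next positive excursion, which gives $A^{-1}(t) = \eta(s-) = t + \eta_{m_-,j_-}(s-) = t + \eta_{m_-,j_-}(s)$, the last equality using the no-common-jump property. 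If instead $t$ falls strictly between $\eta_{m_+,j_+}(s-)$ and $\eta_{m_+,j_+}(s)$, then $A$ grows at unit speed through $t$ during the positive excursion labelled by $s$, and using $\eta(s-) = \eta_{m_+,j_+}(s-) + \eta_{m_-,j_-}(s)$ the analogous direct calculation yields the same formula.

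The main obstacle is the bookkeeping of right-continuous inverses at jump times; the underlying dynamical content---that positive and negative excursions never interleave without a visit to $0$---is already built into the excursion construction of $X$, and once this observation is articulated the Brownian-case argument of Ikeda and Watanabe carries through verbatim.
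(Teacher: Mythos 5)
Your proof is correct and follows exactly the route the paper has in mind: the paper gives no details at all, merely citing the Ikeda--Watanabe Brownian-motion argument and observing that it transfers because the process cannot change sign without visiting $0$, and your excursion-theoretic derivation via the key identity $A(\eta(s)) = \eta_{m_+,j_+}(s)$ together with the no-common-jumps observation is precisely that argument spelled out. One small terminological slip: under condition (C) the jumping-in measure is infinite near $0$, so $\eta_{m_+,j_+}$ is a subordinator with infinite L\'evy measure and hence strictly increasing; it therefore has no flat stretches and your first case is vacuous. What carries the day is your second case, which covers Lebesgue-a.e.\ $t$ (indeed every $t$ not in the Lebesgue-null range of $\eta_{m_+,j_+}$), and the equality then extends to all $t$ because both sides are right-continuous in $t$ ($\eta_{m_+,j_+}^{-1}$ is continuous since $\eta_{m_+,j_+}$ is strictly increasing, and $\eta_{m_-,j_-}$ is c\`adl\`ag).
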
  
 
 We can also prove the following two propositions by the same argument in \cite{Watanabe:Arcsinelaw}. For the proof of the former, Williams formula is utilized.
\begin{Prop} \label{repofDLT}
	Under the same assumption in Proposition \ref{Williamsformula}, we have for every $\lambda > 0$ and $\mu > 0$,
	\begin{align}
	\int_{0}^{\infty}\mathrm{e}^{-\mu t}E[\mathrm{e}^{-\lambda A(t)}]dt 
	= \frac{\chi_{m_+,j_+}(\lambda + \mu) / (\lambda + \mu) + \chi_{m_-,j_-}(\mu)/ \mu}{\chi_{m_+,j_+}(\lambda + \mu) + \chi_{m_-,j_-}(\mu)}. \label{}
	\end{align}
\end{Prop}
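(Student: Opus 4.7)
The plan is to compute the double Laplace transform directly by combining Williams' formula (Proposition \ref{Williamsformula}) with an excursion-type decomposition of the driving subordinators. Writing $F(\lambda,\mu) := \int_0^\infty e^{-\mu t} E[e^{-\lambda A(t)}]dt$, first apply Fubini and split the time integral according to the sign of the process:
\begin{align*}
F(\lambda,\mu) = E\!\int_{\{X_t\ge 0\}} e^{-\mu t-\lambda A(t)}dt + E\!\int_{\{X_t<0\}} e^{-\mu t-\lambda A(t)}dt.
\end{align*}
On $\{X_t\ge 0\}$ we have $dA(t)=dt$, so the change of variable $a=A(t)$ gives $t=A^{-1}(a)$; by Williams' formula $A^{-1}(a)=a+\eta_-(\eta_+^{-1}(a))$, and the first piece becomes $E\int_0^\infty e^{-(\mu+\lambda)a-\mu\eta_-(\eta_+^{-1}(a))}da$. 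Symmetrically, on $\{X_t<0\}$ we use $b=B(t):=t-A(t)$ and $B^{-1}(b)=b+\eta_+(\eta_-^{-1}(b))$ to rewrite the second piece as $E\int_0^\infty e^{-\mu b-(\mu+\lambda)\eta_+(\eta_-^{-1}(b))}db$.

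Next I would exploit the construction in Section \ref{section: contthmofinverselocaltime}, which makes $\eta_+,\eta_-$ independent pure-jump subordinators with L\'evy measures $n_{m_\pm,j_\pm}(T_0\in du)$. Denoting by $S_+$ the (a.s. countable) set of jump times of $\eta_+$, on each gap $[\eta_+(s-),\eta_+(s))$ one has $\eta_+^{-1}(a)\equiv s$, so the first integral reorganizes as
\begin{align*}
\int_0^\infty e^{-(\mu+\lambda)a-\mu\eta_-(\eta_+^{-1}(a))}da
=\sum_{s\in S_+} e^{-(\mu+\lambda)\eta_+(s-)-\mu\eta_-(s)}\,\frac{1-e^{-(\mu+\lambda)\Delta\eta_+(s)}}{\mu+\lambda}.
\end{align*}
(Here I use that $\eta_+$ and $\eta_-$, being independent subordinators, a.s.\ have disjoint jump sets, so $\eta_-(s)=\eta_-(s-)$ at $s\in S_+$.)

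Now I would take the expectation via Campbell's compensation formula for the Poisson random measure $N_+(ds,du)$ of jumps of $\eta_+$, whose intensity is $ds\otimes n_{m_+,j_+}(T_0\in du)$. The integrand, viewed as a predictable function of $s$ times a function of the jump size $\Delta\eta_+(s)$, factors by the independence of $\eta_+$ and $\eta_-$ after using predictability of $\eta_+(s-)$:
\begin{align*}
E\!\int_0^\infty\! ds \int\! n_{m_+,j_+}(T_0\in du)\,
e^{-(\mu+\lambda)\eta_+(s-)-\mu\eta_-(s)}\,\frac{1-e^{-(\mu+\lambda)u}}{\mu+\lambda}
=\frac{\chi_{m_+,j_+}(\mu+\lambda)/(\mu+\lambda)}{\chi_{m_+,j_+}(\mu+\lambda)+\chi_{m_-,j_-}(\mu)},
\end{align*}
after integrating the exponential $e^{-s(\chi_{m_+,j_+}(\mu+\lambda)+\chi_{m_-,j_-}(\mu))}$ in $s$ and recognizing $\int(1-e^{-(\mu+\lambda)u})n_{m_+,j_+}(T_0\in du)=\chi_{m_+,j_+}(\mu+\lambda)$ (the drift is zero by the Itô construction). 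The analogous computation for the negative-side piece yields the summand $\chi_{m_-,j_-}(\mu)/\mu$ over the same denominator, and adding gives the stated identity.

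The main technical point to control will be the legitimacy of the jump-gap rewriting together with the Campbell step: one must verify that the range of the pure-jump subordinator $\eta_+$ has zero Lebesgue measure (so the gap decomposition covers a.e.\ $a$), and that the predictable--vs--jump-size factorization applies despite the integrand depending on the independent process $\eta_-$. Both are standard once the independence of $\eta_+,\eta_-$ and the absence of drift in the Itô-excursion construction are invoked; the rest of the argument is algebraic simplification.
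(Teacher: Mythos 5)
Your proof is correct, and it follows the route the paper indicates: Williams' formula (Proposition~\ref{Williamsformula}) together with the independence of the driftless subordinators $\eta_{m_+,j_+}$ and $\eta_{m_-,j_-}$ coming from the It\^o construction; the paper itself does not spell the computation out but defers to the argument of \cite{Watanabe:Arcsinelaw}. One small streamlining of your Campbell step: after Fubini, independence lets you integrate out $\eta_{m_-,j_-}$ first so the first integrand becomes $e^{-(\mu+\lambda)a}\,E\bigl[e^{-\chi_{m_-,j_-}(\mu)\,\eta_{m_+,j_+}^{-1}(a)}\bigr]$, and the resulting double Laplace transform of the inverse subordinator follows directly from $P(\eta_{m_+,j_+}^{-1}(a)>s)=P(\eta_{m_+,j_+}(s)\le a)$, which bypasses the jump-gap decomposition, the compensation formula, and the Lebesgue-null-range discussion entirely.
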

\begin{Prop}\label{continuityofDLT}
	Under the same assumption in Proposition \ref{Williamsformula},	let $\zeta$ be a real-valued random variable. Then $\frac{1}{t}A(t) \xrightarrow[t \to \infty]{d} \zeta$ is equivalent to the following holds:
	\begin{align}
	\lim_{\gamma \to \infty}\int_{0}^{\infty}\mathrm{e}^{-\mu t}E[\mathrm{e}^{-\frac{\lambda}{\gamma} A(\gamma t)}]dt 
	= \int_{0}^{\infty}\mathrm{e}^{-\mu t}E[\mathrm{e}^{-\lambda t\zeta}]dt
	\ \text{for every} \ \lambda , \mu > 0. \label{}
	\end{align}
\end{Prop}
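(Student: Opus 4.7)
The plan is to study the family $h_\gamma(t) := E[\mathrm{e}^{-\frac{\lambda}{\gamma}A(\gamma t)}]$ and the limit candidate $h(t) := E[\mathrm{e}^{-\lambda t \zeta}]$, and to derive both implications from the pointwise statement $h_\gamma(t) \to h(t)$ for every $t > 0$ and every $\lambda > 0$. Indeed, given such pointwise convergence, the double-Laplace identity follows from dominated convergence against $\mathrm{e}^{-\mu t}dt$ using $|h_\gamma| \leq 1$; conversely, specializing to $t = 1$ gives $E[\mathrm{e}^{-\lambda A(\gamma)/\gamma}] \to E[\mathrm{e}^{-\lambda \zeta}]$ for every $\lambda > 0$, which, since $A(\gamma)/\gamma$ is $[0,1]$-valued, is equivalent to $A(\gamma)/\gamma \xrightarrow[\gamma \to \infty]{d} \zeta$ by the standard Laplace continuity theorem for bounded distributions.

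For the forward direction I observe that if $A(t)/t \xrightarrow[t \to \infty]{d} \zeta$, then for every fixed $t > 0$ the rescaling $s = \gamma t$ yields $A(\gamma t)/(\gamma t) \xrightarrow[\gamma \to \infty]{d} \zeta$, and bounded convergence immediately produces
\begin{equation}
h_\gamma(t) = E[\mathrm{e}^{-\lambda t \cdot A(\gamma t)/(\gamma t)}] \to h(t),
\end{equation}
which together with the reduction above closes this direction.

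For the reverse direction the crucial input is that $A$ is $1$-Lipschitz in time (being the occupation time of a half-line), which yields $|A(\gamma t) - A(\gamma s)| \leq \gamma|t-s|$ almost surely, and hence $|h_\gamma(t) - h_\gamma(s)| \leq \lambda|t-s|$ uniformly in $\gamma$. The family $\{h_\gamma\}$ is therefore uniformly bounded by $1$ and uniformly Lipschitz on $[0,\infty)$, so by Arzel\`a-Ascoli every subsequence admits a sub-subsequence converging locally uniformly to some continuous $\tilde h$. Passing to the limit in the double-Laplace hypothesis along this sub-subsequence by dominated convergence gives
\begin{equation}
\int_{0}^{\infty}\mathrm{e}^{-\mu t}\tilde h(t)dt = \int_{0}^{\infty}\mathrm{e}^{-\mu t}h(t)dt
\end{equation}
for every $\mu > 0$, so $\tilde h \equiv h$ by uniqueness of Laplace transforms on continuous bounded functions; since every subsequential limit agrees with $h$, the whole family $h_\gamma$ converges pointwise to $h$, as required. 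I expect this last step to be the main obstacle, since the hypothesis controls $h_\gamma$ only through its integral against $\mathrm{e}^{-\mu t}dt$, and it is precisely the $1$-Lipschitz character of the occupation time that provides the uniform equicontinuity needed for Arzel\`a-Ascoli to upgrade integrated convergence to convergence at a single point.
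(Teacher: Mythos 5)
Your argument is correct, and it is essentially self-contained where the paper simply defers to Watanabe's cited work without reproducing a proof. The forward direction is routine (weak convergence tested against the bounded continuous function $x\mapsto e^{-\lambda t x}$ on $[0,1]$, then dominated convergence in $t$). The reverse direction is where the real content lies, and you identify the right mechanism: the pathwise $1$-Lipschitz bound $|A(\gamma t)-A(\gamma s)|\le\gamma|t-s|$ gives the $\gamma$-uniform estimate $|h_\gamma(t)-h_\gamma(s)|\le\lambda|t-s|$, which is exactly the equicontinuity needed to upgrade convergence of the Laplace transforms $\int_0^\infty e^{-\mu t}h_\gamma(t)\,dt$ to pointwise convergence of $h_\gamma$ via Arzel\`a--Ascoli, sub-subsequences, and uniqueness of Laplace transforms of continuous bounded functions. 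Some uniform regularity of this kind is indeed indispensable: $h_\gamma(t)=\sin^2(\gamma t)$ satisfies the Laplace-transform hypothesis with limit $h\equiv 1/2$ and yet fails to converge pointwise. An alternative of comparable economy would note that each $h_\gamma$ is non-increasing in $t$ (since $A$ is nondecreasing) and invoke Helly selection in place of Arzel\`a--Ascoli; your Lipschitz route avoids fussing over discontinuities of the subsequential limit and is arguably cleaner. The only small point worth making explicit is that the final appeal to the Laplace continuity theorem at $t=1$ requires $\zeta\ge 0$; this is automatic in context, since $\zeta$ is the weak limit of the $[0,1]$-valued family $A(\gamma)/\gamma$, and more generally the hypothesis forces $E[e^{-\lambda t\zeta}]\le 1$ for all $\lambda,t>0$.
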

\begin{Def}
	For $\alpha,p \in [0,1]$, we define the generalized arcsine distribution $\mu_{\alpha,p}$ which is characterized by its Stieltjes transform:
	\begin{align}
	\int_{0}^{\infty}\frac{\mu_{\alpha,p}(dx)}{\lambda + x} = \frac{p(\lambda +  1)^{\alpha - 1} + (1-p)\lambda^{\alpha-1}}{p(\lambda + 1)^{\alpha} + (1-p)\lambda^{\alpha}} \ (\lambda > 0).  \label{}
	\end{align}
\end{Def}

 The following theorem can be proved by the same argument in Theorem \ref{alpha1-2}.
\begin{Thm}\label{alpha0-1}
	Let $m \in \cM_1$, $j$ be a Radon measure on $(0,\infty)$ and  $K$ be a slowly varying function at $\infty$ and, assume $(m,j)$ satisfies $\mathrm{(C)}$.
	Suppose the following hold: 
	\begin{enumerate}
		\item $m(x) \sim (1 - \alpha)^{-1}x^{1/\alpha - 1}K(x) \ (x \to \infty)$ for a constant $\alpha \in (0,1)$,
		\item $\kappa := \int_{0}^{\infty}xj(dx) < \infty$.
	\end{enumerate} 
	Then we have
	\begin{align}
	&\frac{1}{\gamma^{1/\alpha}K(\gamma)}\eta_{m,j}(\gamma t)  \xrightarrow[\gamma \to \infty]{d}T(m^{(\alpha)};\kappa t) \ \text{on}\ \bD, \label{} \\
	&n_{m,j}[T_0 > s] \sim \frac{\kappa\alpha^{\alpha-1}}{\Gamma(\alpha)} s^{-\alpha}L^{\sharp}(s)^{-\alpha} \ (s \to \infty), \label{eq110}
	\end{align}
	where $L^{\sharp}(x)$ be a de Bruijn conjugate of $L(x) = K(x^{\alpha})$.
\end{Thm}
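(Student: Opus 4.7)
The plan is to parallel the proof of Theorem \ref{alpha1-2}: rescale the string and the jumping-in measure so as to reduce to the continuity theorem (Theorem \ref{convtojump}) with target string $m^{(\alpha)}$, and then deduce the tail asymptotic of $n_{m,j}[T_0 > \cdot]$ via a Tauberian argument. The case $\alpha \in (0,1)$ is in fact softer than $\alpha \in (1,2)$: the limit is an $\alpha$-stable subordinator (positive, no mean), so no centering drift is needed, and the normalized strings $m_\gamma$ converge to a non-negative string with $m^{(\alpha)}(0+) = 0$, which will force the vestigial drift arising from Theorem \ref{convtojump} to vanish in the limit.

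Concretely, I would define $m_\gamma(x) = m(\gamma x)/(\gamma^{1/\alpha - 1} K(\gamma))$ and $j_\gamma(dx) = \gamma\, j(d(\gamma x))$, so that the scaling identities in Section \ref{section: scalinglimit} give
\begin{equation*}
\frac{1}{\gamma^{1/\alpha} K(\gamma)}\, \eta_{m,j}(\gamma t) \dist \eta_{m_\gamma, j_\gamma}(t) \ \text{on } \bD.
\end{equation*}
The four hypotheses of Theorem \ref{convtojump} with limit $m = m^{(\alpha)}$ and parameter $\kappa$ are then verified as follows: pointwise convergence $m_\gamma(x) \to m^{(\alpha)}(x)$ is immediate from hypothesis (i); the local $L^2$-control $\lim_{\delta \to 0}\limsup_\gamma \int_0^\delta m_\gamma(x)^2\, dx = 0$ follows from Karamata's theorem since the limiting integrand $(1-\alpha)^{-2} x^{2/\alpha - 2}$ is locally integrable at $0$ when $\alpha \in (0,1)$; and the weak convergences $j_\gamma \xrightarrow{w} 0$ on $[1,\infty]$ and $x\, j_\gamma(dx) \xrightarrow{w} \kappa\, \delta_0(dx)$ on $[0,1]$ follow from hypothesis (ii) by the same changes of variable used in Theorem \ref{alpha1-2}.

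The pivotal step, replacing the delicate centering analysis of the $\alpha \in (1,2)$ case, is to show that the drift $b_\gamma := -\int_0^1 G_{m_\gamma}(x)\, j_\gamma(dx)$ supplied by Theorem \ref{convtojump} tends to $0$. Karamata's theorem gives $\int_0^{\gamma x} m(y)\, dy \sim \frac{\alpha}{1-\alpha}(\gamma x)^{1/\alpha} K(\gamma x)$, hence by Potter's theorem $|G_{m_\gamma}(x)| \leq C\, x^{1/\alpha - \epsilon}$ uniformly in $\gamma$ large and $x \in [0,1]$ for a small $\epsilon > 0$. Writing $|b_\gamma| \leq \int_0^1 (|G_{m_\gamma}(x)|/x)\, x\, j_\gamma(dx)$ and splitting at a small threshold $\delta$, the inner region is bounded by $\sup_{x \leq \delta}(|G_{m_\gamma}(x)|/x) \cdot \kappa$, which vanishes as $\delta \to 0$ since $1/\alpha - \epsilon > 1$, while the outer region is negligible because $j_\gamma[\delta, 1] \leq \delta^{-1}\int_\delta^1 x\, j_\gamma(dx) \to 0$. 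Hence Theorem \ref{convtojump} yields $\eta_{m_\gamma, j_\gamma}(t) \xrightarrow{d} T(m^{(\alpha)}; \kappa t)$ without centering, proving the first assertion.

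For the tail asymptotic \eqref{eq110}, the argument transcribes the corresponding passage in Theorem \ref{alpha1-2}: the scaling limit combined with Proposition \ref{continuityofLT} gives $\chi_{m,j}(\lambda) \sim \kappa\, H_{m^{(\alpha)}}(1)\, \lambda^{\alpha} L^\sharp(1/\lambda)^{-\alpha}$ as $\lambda \to 0^+$; since no subtraction of $b\lambda$ is required, $-\chi_{m,j}'$ is directly completely monotone, so the monotone density theorem promotes this to the asymptotic $\chi_{m,j}'(\lambda) \sim \kappa\, \alpha\, H_{m^{(\alpha)}}(1)\, \lambda^{\alpha - 1} L^\sharp(1/\lambda)^{-\alpha}$; and Theorem \ref{KasaharaTauberian} applied to the Radon measure $\nu(dx) = n_{m,j}[T_0 > x]\, dx$ (whose Laplace transform is $\chi_{m,j}(\lambda)/\lambda$), followed by a final invocation of the monotone density theorem, delivers the claim. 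The main technical obstacle is the uniform-in-$\gamma$ Potter-type control on $G_{m_\gamma}$ needed for $b_\gamma \to 0$; beyond this refinement, the proof is a faithful adaptation of the $\alpha \in (1,2)$ argument.
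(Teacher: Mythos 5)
The scaling-limit part of your proposal is essentially the intended argument (the paper itself says this theorem ``can be proved by the same argument in Theorem~\ref{alpha1-2}''), and the key observation that $b_\gamma := -\int_0^1 G_{m_\gamma}\,dj_\gamma \to 0$ is the right replacement for the centering analysis. One small technical remark: the bound $|G_{m_\gamma}(x)|\le Cx^{1/\alpha-\epsilon}$ obtained from Potter's theorem is not actually uniform over \emph{all} $x\in(0,1]$ for large $\gamma$, because Potter requires both arguments of the slowly varying function to be bounded away from $0$; it holds only for $\gamma x\ge X_0$. This is harmless but must be patched with a secondary split: the contribution of $\int_0^{X_0/\gamma}|G_{m_\gamma}|\,dj_\gamma = \gamma^{1-1/\alpha}K(\gamma)^{-1}\int_0^{X_0}|G_m|\,dj$ tends to $0$ by condition (C) and $1-1/\alpha<0$, and on $[X_0/\gamma,\delta]$ your Potter bound applies.

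The tail-asymptotic part, however, has a genuine gap. Theorem~\ref{KasaharaTauberian} requires $\mu$ to be a \emph{finite} Radon measure and the index $\beta>0$. For $\alpha\in(0,1)$ the measure $\nu(dx)=n_{m,j}[T_0>x]\,dx$ is not finite: indeed $\nu(0,\infty)=n_{m,j}[T_0]=\int_0^\infty j(dx)\int_0^x m(y,\infty)\,dy=\infty$ because $m(\infty)=\infty$ under hypothesis (i), and more to the point $\nu[s,\infty)=\int_s^\infty n_{m,j}[T_0>u]\,du=\infty$ for every $s$ (the very conclusion~\eqref{eq110} gives $n_{m,j}[T_0>u]\sim c\,u^{-\alpha}$ with $\alpha<1$, which is not integrable at $\infty$). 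Moreover, matching your derived asymptotic $f'(\lambda)\sim C(\alpha-1)\lambda^{\alpha-2}L^\sharp(1/\lambda)^{-\alpha}$ to condition (ii) of Theorem~\ref{KasaharaTauberian} with $n=1$ would force $\beta=\alpha-1<0$, violating the theorem's hypothesis. So the invocation of Theorem~\ref{KasaharaTauberian} fails. The correct (and in fact simpler) route avoids differentiating $\chi_{m,j}$ altogether: since $\hat\nu(\lambda)=\chi_{m,j}(\lambda)/\lambda=\int_0^\infty e^{-\lambda u}\,n_{m,j}[T_0>u]\,du\sim C\lambda^{\alpha-1}L^\sharp(1/\lambda)^{-\alpha}$ with $C=\kappa H_{m^{(\alpha)}}(1)$, the ordinary Karamata Tauberian theorem \cite[Theorem 1.7.1]{Regularvariation} (which does not require finiteness) gives $\int_0^s n_{m,j}[T_0>u]\,du\sim \frac{C}{\Gamma(2-\alpha)}\,s^{1-\alpha}L^\sharp(s)^{-\alpha}$, and one application of the monotone density theorem, using that $n_{m,j}[T_0>\cdot]$ is decreasing, yields $n_{m,j}[T_0>s]\sim\frac{C(1-\alpha)}{\Gamma(2-\alpha)}\,s^{-\alpha}L^\sharp(s)^{-\alpha}=\frac{\kappa\alpha^{\alpha-1}}{\Gamma(\alpha)}\,s^{-\alpha}L^\sharp(s)^{-\alpha}$, which is~\eqref{eq110}.
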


The following is the desired limit theorem.
\begin{Thm}\label{arcalpha0-1}
	Let $m_+,m_- \in \cM_1$ and $j_+,j_-$ be Radon measures on $(0,\infty)$ and suppose $(m_+,j_+)$ and $(m_-,j_-)$ satisfy (C).
	Assume the following hold:
	\begin{enumerate}
		\item $m_\pm(x) \sim c_\pm (1-\alpha)^{-1} x^{1/\alpha -1}K (x) (x \to \infty)$ for constants $\alpha  \in (0,1)$, $c_\pm > 0$  and a slowly varying function $K$ at $\infty$, respectively,
		\item $\kappa_\pm := \int_{0}^{\infty}xj_\pm(dx) < \infty$. 
	\end{enumerate}
	Then we have
	\begin{align}
	\frac{1}{t}A(t) \xrightarrow[t \to \infty]{d} Y_{\alpha,p}, \label{}
	\end{align}
	where $p = \frac{\kappa_+c_+^\alpha}{\kappa_+c_+^\alpha + \kappa_-c_-^\alpha}$ and $Y_{\alpha,p}$ is distributed as $\mu_{\alpha,p}$.
\end{Thm}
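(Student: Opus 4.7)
The strategy is to compute the rescaled double Laplace transform of $A(t)$ via Proposition \ref{repofDLT}, identify its pointwise limit in $(\lambda,\mu)$, recognize the limit as the Stieltjes-type transform of $\mu_{\alpha,p}$, and then invoke Proposition \ref{continuityofDLT} to conclude convergence in law. The rescaling $A(t)\mapsto A(\gamma t)/\gamma$ reduces, after a change of variables in the integral defining the double Laplace transform, to the expression
\begin{align}
\int_{0}^{\infty}\mathrm{e}^{-\mu t}E\!\left[\mathrm{e}^{-\frac{\lambda}{\gamma} A(\gamma t)}\right]dt
= \frac{\chi_{+}((\lambda+\mu)/\gamma)/(\lambda+\mu)+\chi_{-}(\mu/\gamma)/\mu}{\chi_{+}((\lambda+\mu)/\gamma)+\chi_{-}(\mu/\gamma)}, \label{eq:DLT}
\end{align}
where $\chi_{\pm}=\chi_{m_{\pm},j_{\pm}}$. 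So the entire task is to let $\gamma\to\infty$ in the right-hand side.

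The key input is the near-zero asymptotics of $\chi_{\pm}$ supplied by Theorem \ref{alpha0-1}. Since $m_{\pm}(x)\sim c_{\pm}(1-\alpha)^{-1}x^{1/\alpha-1}K(x)$, Theorem \ref{alpha0-1} applied to $m_{\pm}$ with the slowly varying factor $c_{\pm}K$ in place of $K$ yields $\gamma\chi_{\pm}\!\bigl(\lambda/(\gamma^{1/\alpha}c_{\pm}K(\gamma))\bigr)\to\kappa_{\pm}C_{\alpha}\lambda^{\alpha}$, with $C_{\alpha}=\Gamma(2-\alpha)\alpha^{\alpha-1}/(\Gamma(\alpha)(1-\alpha))$. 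Equating the two normalizations shows that $\chi_{+}(\nu)/\chi_{-}(\nu)\to c_{+}^{\alpha}\kappa_{+}/(c_{-}^{\alpha}\kappa_{-})$ as $\nu\to 0^{+}$, and more generally, since both $\chi_{\pm}$ are regularly varying of index $\alpha$ at $0$ with a common slowly varying factor inherited from $K$, one has $\chi_{\pm}(s\nu)/\chi_{\pm}(\nu)\to s^{\alpha}$ uniformly in $\pm$. Dividing numerator and denominator in \eqref{eq:DLT} by $\chi_{-}(\mu/\gamma)$ and passing to the limit using these two facts, the slowly varying factors cancel and one obtains, with $p=c_{+}^{\alpha}\kappa_{+}/(c_{+}^{\alpha}\kappa_{+}+c_{-}^{\alpha}\kappa_{-})$,
\begin{align}
\lim_{\gamma\to\infty}\int_{0}^{\infty}\mathrm{e}^{-\mu t}E\!\left[\mathrm{e}^{-\frac{\lambda}{\gamma} A(\gamma t)}\right]dt
=\frac{p(\lambda+\mu)^{\alpha-1}+(1-p)\mu^{\alpha-1}}{p(\lambda+\mu)^{\alpha}+(1-p)\mu^{\alpha}}. \label{eq:limDLT}
\end{align}

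To finish, I would show that the right-hand side of \eqref{eq:limDLT} equals $E[1/(\mu+\lambda Y_{\alpha,p})]=\int_{0}^{\infty}\mathrm{e}^{-\mu t}E[\mathrm{e}^{-\lambda t Y_{\alpha,p}}]dt$ for $Y_{\alpha,p}\sim\mu_{\alpha,p}$. This is a direct rescaling calculation: substituting $\lambda\mapsto\mu/\lambda$ in the defining Stieltjes transform of $\mu_{\alpha,p}$ and clearing the powers of $\lambda$ gives exactly the right-hand side of \eqref{eq:limDLT}. Combining with Proposition \ref{continuityofDLT} proves $\frac{1}{t}A(t)\xrightarrow{d}Y_{\alpha,p}$.

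The main obstacle is the asymptotic step leading to \eqref{eq:limDLT}: one must justify that $\chi_{+}/\chi_{-}$ converges to the stated constant and that $\chi_{\pm}$ is regularly varying of index $\alpha$ at the origin, both with the \emph{same} slowly varying remainder (up to the prefactor $c_{\pm}^{\alpha}\kappa_{\pm}$). This is where the assumption that $m_{+}$ and $m_{-}$ share a common slowly varying $K$ is essential; otherwise the ratio would not converge and no simple Stieltjes-type limit would arise. Everything else --- the change of variables in \eqref{eq:DLT}, the arithmetic identification with the Stieltjes transform of $\mu_{\alpha,p}$, and the application of the continuity theorem --- is routine once the asymptotics of $\chi_{\pm}$ at $0$ are in hand.
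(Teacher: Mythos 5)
Your proof is correct and follows exactly the route the paper indicates: the paper leaves Theorem \ref{arcalpha0-1} without an explicit proof but prepares Propositions \ref{repofDLT} and \ref{continuityofDLT} together with Theorem \ref{alpha0-1} precisely so that the double-Laplace-transform argument of Watanabe \cite{Watanabe:Arcsinelaw} can be carried over. Your identification of the two asymptotic facts needed --- that $\chi_{\pm}$ is regularly varying of index $\alpha$ at $0$ and that $\chi_{+}(\nu)/\chi_{-}(\nu) \to \kappa_+c_+^{\alpha}/(\kappa_-c_-^{\alpha})$ as $\nu\to0^+$, both direct consequences of Theorem \ref{alpha0-1} applied with the slowly varying factors $c_{\pm}K$ --- is the correct key step, and the algebraic reduction to the Stieltjes transform of $\mu_{\alpha,p}$ via $\lambda\mapsto\mu/\lambda$ is accurate.
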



\begin{thebibliography}{10}

\bibitem{Regularvariation}
N.~H. Bingham, C.~M. Goldie, and J.~L. Teugels.
\newblock {\em Regular variation}, volume~27 of {\em Encyclopedia of
  Mathematics and its Applications}.
\newblock Cambridge University Press, Cambridge, 1987.

\bibitem{Feller:Theparabolic}
W.~Feller.
\newblock The parabolic differential equations and the associated semi-groups
  of transformations.
\newblock {\em Ann. of Math. (2)}, 55:468--519, 1952.

\bibitem{IkedaWatanabe:Stochastic}
N.~Ikeda and S.~Watanabe.
\newblock {\em Stochastic differential equations and diffusion processes},
  volume~24 of {\em North-Holland Mathematical Library}.
\newblock North-Holland Publishing Co., Amsterdam-New York; Kodansha, Ltd.,
  Tokyo, 1981.

\bibitem{Ito:Essentialsof}
K.~It\^o.
\newblock {\em Essentials of stochastic processes}, volume 231 of {\em
  Translations of Mathematical Monographs}.
\newblock American Mathematical Society, Providence, RI, 2006.
\newblock Translated from the 1957 Japanese original by Yuji Ito.

\bibitem{Ito:PPP}
K.~It\^o.
\newblock {\em Poisson point processes and their application to {M}arkov
  processes}.
\newblock SpringerBriefs in Probability and Mathematical Statistics. Springer,
  Singapore, 2015.
\newblock With a foreword by Shinzo Watanabe and Ichiro Shigekawa.

\bibitem{ItoMcKean}
K.~It\^o and H.~P.~McKean Jr.
\newblock {\em Diffusion processes and their sample paths}.
\newblock Springer-Verlag, Berlin-New York, 1974.
\newblock Second printing, corrected, Die Grundlehren der mathematischen
  Wissenschaften, Band 125.

\bibitem{Kallenberg:Foundationsof}
O.~Kallenberg.
\newblock {\em Foundations of modern probability}.
\newblock Probability and its Applications (New York). Springer-Verlag, New
  York, second edition, 2002.

\bibitem{Kasahara:Spectraltheory}
Y.~Kasahara.
\newblock Spectral theory of generalized second order differential operators
  and its applications to {M}arkov processes.
\newblock {\em Japan. J. Math. (N.S.)}, 1(1):67--84, 1975/76.

\bibitem{Kasahara:Tailsof}
Y.~Kasahara.
\newblock Tails of the first hitting times of linear diffusions.
\newblock {\em Tsukuba J. Math.}, 40(1):55--79, 2016.

\bibitem{KasaharaKotani:Onlimit}
Y.~Kasahara and S.~Kotani.
\newblock On limit processes for a class of additive functionals of recurrent
  diffusion processes.
\newblock {\em Z. Wahrsch. Verw. Gebiete}, 49(2):133--153, 1979.

\bibitem{KasaharaWatanabe:Brownianrepresentation}
Y.~Kasahara and S.~Watanabe.
\newblock Brownian representation of a class of {L}\'evy processes and its
  application to occupation times of diffusion processes.
\newblock {\em Illinois J. Math.}, 50(1-4):515--539, 2006.

\bibitem{KasaharaWatanabe:Remarkson}
Y.~Kasahara and S.~Watanabe.
\newblock Remarks on {K}rein-{K}otani's correspondence between strings and
  {H}erglotz functions.
\newblock {\em Proc. Japan Acad. Ser. A Math. Sci.}, 85(3):22--26, 2009.

\bibitem{Kotani:Krein'sstrings}
S.~Kotani.
\newblock Krein's strings with singular left boundary.
\newblock {\em Rep. Math. Phys.}, 59(3):305--316, 2007.

\bibitem{KotaniWatanabe:Krein'sspectraltheory}
S.~Kotani and S.~Watanabe.
\newblock Kre\u\i n's spectral theory of strings and generalized diffusion
  processes.
\newblock In {\em Functional analysis in {M}arkov processes ({K}atata/{K}yoto,
  1981)}, volume 923 of {\em Lecture Notes in Math.}, pages 235--259. Springer,
  Berlin-New York, 1982.

\bibitem{Watanabe:Arcsinelaw}
S.~Watanabe.
\newblock Generalized arc-sine laws for one-dimensional diffusion processes and
  random walks.
\newblock In {\em Stochastic analysis ({I}thaca, {NY}, 1993)}, volume~57 of
  {\em Proc. Sympos. Pure Math.}, pages 157--172. Amer. Math. Soc., Providence,
  RI, 1995.

\bibitem{Whitt:Stochastic-process}
W.~Whitt.
\newblock {\em Stochastic-process limits}.
\newblock Springer Series in Operations Research. Springer-Verlag, New York,
  2002.
\newblock An introduction to stochastic-process limits and their application to
  queues.

\bibitem{Yano:Convergenceofexcursion}
K.~Yano.
\newblock Convergence of excursion point processes and its applications to
  functional limit theorems of {M}arkov processes on a half-line.
\newblock {\em Bernoulli}, 14(4):963--987, 2008.

\end{thebibliography}

\end{document}